\documentclass[11pt, twosided]{amsart}
\usepackage{amsbsy,amssymb,pstricks,pst-node,mathrsfs,MnSymbol,stmaryrd}
\usepackage{subfigure}
\usepackage[left]{lineno}
\usepackage[inline]{enumitem}
\usepackage[left=3.5cm,right=3cm,top=2cm,bottom=1cm]{geometry}
\usepackage{tikz}
\usetikzlibrary{positioning}
\theoremstyle{definition}

\newtheorem{theorem}{Theorem}[section]

\newtheorem{lemma}[theorem]{Lemma}
\newtheorem{remark}[theorem]{Remark}
\newtheorem{proposition}[theorem]{Proposition}

\newtheorem{corollary}[theorem]{Corollary}

\newtheorem{definition}[theorem]{Definition}
\newtheorem{example}[theorem]{Example}

\begin{document}
	
	
	\title{On Isomorphism theorem of the Comparability Graph of Lattices}
	
	\maketitle \markboth{Rahul Jejurkar and Vinayak Joshi}{Isomorphism and Property Transfer in Lattices}
	
	\begin{center} \begin{large} Rahul Jejurkar$^{*}$  and Vinayak Joshi$^{**}$ \end{large}\\
		
		\begin{small}\vskip.1in\emph{$^{*}$School of Technology, Managment and Engineering,\\ SVKM NMIMS Global University, Dhule - 424001, Maharashtra, India
		\\$^{**}$Department of Mathematics, Savitribai Phule Pune University,\\ Pune - 411007, Maharashtra, India}\\
		E-mail: \texttt{rahuljejurkar1310@gmail.com, vinayakjoshi111@yahoo.com}\end{small}\end{center}
	
	\vskip.2in
	
	\begin{abstract}  
		In recent years, researchers have actively contributed to the field of graphs associated with algebraic structures and ordered structures. It is a fundamental question to ask whether we infer algebraic or ordered structure from associated graphs and vice versa. In this paper, we gave characterizations about comparability	graphs and associated lattices. In particular, we determined some properties of lattices that are preserved under the graph isomorphism. We have also provided a technique to construct non-isomorphic lattices with isomorphic comparability graphs. Also, we find two classes of lattices in which the graph isomorphism gives the lattice isomorphism.
		
	\end{abstract}
	
	\vskip.2in
	
	\noindent\begin{Small}\textbf{Mathematics Subject Classification (2020)}: 05C60, 06C05.
		
		\noindent 
		\textbf{Keywords}: Comparability graph, isomorphism, modular, distributive, atomistic, dual atomistic, complemented. \end{Small}
	
	\vskip.25in
	
	\baselineskip 16truept

\section{Introduction}
		
	There are two natural approaches relating a graph with an ordered structure, one by using the covering relation and the other by using the comparability relation. The resultant graphs are known as the covering graph $Cov(P)$ and the comparability graph $Com(P)$ associated with a poset $P$. The complement of the comparability graph is known as the incomparability graph. Recently, as like covering graph and comparability graph, Bre\v{s}ar, Changat et al. in  \cite{changat1}, introduced the cover-incomparability graph, abbreviated as a C-I graph,   associated with a poset, whose
	edge-set is the union of edge-sets of the incomparability and the cover graph of that poset. The C-I  graphs also form an interesting class of graphs from posets. 
	
	
	It is an important problem to determine all posets whose covering graph or comparability graph is given. Generally, the covering or comparability graph does not determine the associated poset up to isomorphism. This suggests that there is a loss of information in a graph theoretic sense. On the other hand, there are some specific classes of posets whose covering graph $Cov(P)$ and comparability graph $Com(P)$ still provides important information about the underline poset $P$.  
	
	In comparison to the comparability graph, a lot of progress has been accomplished in the case of covering graphs. Ward \cite{ward}, and Dilworth  \cite{dilworth} deduced some interesting conclusions in this area in the last century. In \cite{ward}, Ward proved that a modular lattice of finite length is distributive if and only if its covering graph contains no subgraph isomorphic to $ K_{2,3} $ (the covering graph of $ M_3 $). In \cite{dilworth}, Dilworth proved that the covering graph of a modular lattice of finite length and of breadth $ n $ contains a subgraph isomorphic to the $ n $-dimensional hypercube, i.e., $\mathbf{2}^n$.
	
	Ore \cite{Ore} essentially posed the problem: Characterize those finite unoriented graphs $G$ for which there exists a poset $P$ such that $G \cong Cov(P)$. This problem is one of the major open problems of order theory. On similar lines, Birkhoff \cite{Birkhoff} asked for necessary and sufficient conditions on lattices $L_1$  and $L_2$ such that their covering graphs are isomorphic. To tackle this problem, Jakubik \cite{jak} proved that for modular lattices, all graph isomorphisms are given by certain direct-product decomposition. In \cite{jak}, Jakubik essentially proved the following results.
	
	\begin{theorem}[{\cite[Theorem 2]{jak}}]{\label{jak1}}
		Let $L_1$ and $L_2$ be   modular lattices of finite length such that $Cov(L_1) \cong Cov(L_2)$. Then there are lattices $ A, B $ such that $L_1 \cong A \times B $ and $L_2 \cong A^d \times B$.
	\end{theorem}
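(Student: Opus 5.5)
The natural route is through Jakubík's analysis of how a graph isomorphism $\varphi\colon Cov(L_1)\to Cov(L_2)$ acts on covering pairs. We want to show that $\varphi$ either preserves or reverses the orientation of edges in a coherent way, factor by factor.

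\textbf{Step 1: local behaviour on squares.} In a modular lattice of finite length, covering pairs are related by perspectivity along $4$-cycles (covering squares) $a\wedge b\prec a,b\prec a\vee b$. A graph isomorphism sends $4$-cycles of $Cov(L_1)$ to $4$-cycles of $Cov(L_2)$. In a modular lattice, every $4$-cycle in the covering graph is a covering square; this follows from the Jordan--Dedekind chain condition via the height function. Using this, I would show that on each square $\varphi$ either preserves the orientation of both pairs of opposite edges or reverses both. Hence ``orientation preserved'' versus ``reversed'' is constant along projectivity classes of prime intervals.

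\textbf{Step 2: define $A$ and $B$.} Partition the projectivity classes of prime quotients of $L_1$ into two sets: $\mathcal{R}$, the classes whose edges $\varphi$ reverses, and $\mathcal{P}$, the classes whose edges it preserves. Fix $x_0=\varphi^{-1}(0_{L_2})$. Let $A$ be the set of elements reachable from $x_0$ by chains using only $\mathcal{R}$-edges, and $B$ the set of those reachable using only $\mathcal{P}$-edges. Using the Kurosh--Ore type independence of projectivity classes in modular lattices of finite length, I would show that $L_1\cong A\times B$, via $x\mapsto(\text{$\mathcal{R}$-component},\ \text{$\mathcal{P}$-component})$. The same decomposition transported by $\varphi$ gives $L_2\cong A^d\times B$, because $\varphi$ reverses the order on the $A$-coordinate and preserves it on the $B$-coordinate.

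\textbf{Step 3: main obstacle.} The hard part is proving that a modular lattice of finite length splits as a direct product according to an arbitrary union of ``compatible'' projectivity classes. This is essentially Jakubík's core argument. I would attack it by showing that the join of all atoms in $\mathcal{R}$-classes and the join of those in $\mathcal{P}$-classes are complementary neutral elements. Modularity, combined with the constancy of reversal along squares, should yield neutrality. Since the paper states this as a cited result from \cite{jak}, it would ultimately be taken from there rather than reproved.
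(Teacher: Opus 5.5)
The paper gives no proof of this statement; it only quotes it from Jakubik. So the question is whether your outline proves it, and as it stands it does not.

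Steps~1--2 reduce everything to one claim: the bipartition $\mathcal{R}\cup\mathcal{P}$ induced by $\varphi$ is a direct decomposition of $L_1$. That claim is the whole content of the theorem, and the tools you name cannot deliver it.

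\textbf{No general splitting principle.} There is no ``independence'' principle that splits a modular lattice of finite length along an arbitrary union of projectivity classes. The chain $C_3=\{0<a<1\}$ has two classes and is directly indecomposable.

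\textbf{Square-constancy is not enough.} Modularity plus constancy along squares cannot give neutrality either. $C_3$ has no covering squares, so the labelling ``$[0,a]$ reversed, $[a,1]$ preserved'' is square-constant, yet nothing splits. That labelling is excluded only because the reorientation it prescribes is the Hasse diagram of a poset with two maximal elements, not of a lattice. So some global property of $L_2$ must enter, and your sketch uses none beyond naming $x_0$.

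\textbf{Wrong elements in Step~3.} The elements proposed in Step~3 fail even when a decomposition exists. Take $L_1=L_2=C_3$ and $\varphi$ the order-reversing bijection. Every edge lies in $\mathcal{R}$, and the correct answer is $A=L_1$ with $B$ trivial. But the join of the $\mathcal{R}$-atoms is $a\neq 1$ and the join of the $\mathcal{P}$-atoms is $0$; the join of the atoms of a factor reaches its top only when the factor is complemented. The right elements are $x_0=\varphi^{-1}(0_{L_2})$ and $y_0=\varphi^{-1}(1_{L_2})$, that is, $(1_A,0_B)$ and $(0_A,1_B)$ in $A\times B$. Your Step~2 sets are in fact $[0,x_0]$ and $[x_0,1]$.

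\textbf{Wording of Step~1.} Read literally, Step~1 is false. For $\varphi$ the identity map $A\times B\to A^d\times B$, a square spanned by an $A$-edge and a $B$-edge has one pair of opposite sides reversed and the other preserved. What is true, and all you actually use, is that the two edges of each opposite pair behave alike.

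\textbf{How to close the gap.} Use the graph metric instead of edge orientations.
In a modular lattice of finite length, the distance in $Cov(L)$ is $d(x,y)=h(x\vee y)-h(x\wedge y)$, and $u\le v$ if and only if $d(0,u)+d(u,v)=d(0,v)$.

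Since $\varphi$ is an isometry, $d(x_0,y_0)=d(0_{L_2},1_{L_2})=\ell(L_2)=\ell(L_1)$, as both lengths equal the common diameter. This forces $x_0\wedge y_0=0$ and $x_0\vee y_0=1$. Also $d(x_0,z)+d(z,y_0)=\ell(L_1)$ for every $z$, because $d(0,w)+d(w,1)=\ell(L_2)$ for every $w\in L_2$.

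Put $u=z\wedge x_0$, $v=z\wedge y_0$, $U=z\vee x_0$, $V=z\vee y_0$, so that $d(x_0,z)=h(U)-h(u)$ and $d(z,y_0)=h(V)-h(v)$. The equation above, together with $h(x)+h(y)=h(x\vee y)+h(x\wedge y)$, $u\wedge v=0$ and $U\vee V=1$, gives $h(u\vee v)=h(u)+h(v)=h(U)+h(V)-\ell(L_1)=h(U\wedge V)$. Since $u\vee v\le z\le U\wedge V$, it follows that $z=(z\wedge x_0)\vee(z\wedge y_0)$.

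Hence $z\mapsto(z\wedge x_0,z\wedge y_0)$ is an isomorphism $L_1\cong[0,x_0]\times[0,y_0]$. It is injective and order-reflecting by this identity. It is surjective because $(p\vee q)\wedge x_0=p\vee(q\wedge x_0)=p$ for $p\le x_0$, $q\le y_0$, by modularity, and symmetrically in the second coordinate.

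Lastly, $d$ is additive over this product. The order of $L_2$ pulled back along $\varphi$ is $x\le' y$ if and only if $d(x_0,x)+d(x,y)=d(x_0,y)$. By the triangle inequality in each factor, this equality holds exactly when it holds coordinatewise. That means the first coordinate decreases in $[0,x_0]$ and the second increases in $[0,y_0]$. Thus $L_2\cong[0,x_0]^d\times[0,y_0]$.
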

	
	\begin{theorem}[{\cite[Theorem 1, Theorem 3]{jak}}]{\label{jak2}}
		Let $L_1$ and $L_2$ be lattices of finite length such that $Cov(L_1) \cong Cov(L_2)$. Then $L_1$ is modular (distributive) if and only if  $L_2$ is modular (distributive).
	\end{theorem}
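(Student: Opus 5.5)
The plan is to work entirely from the structure of the covering graph $Cov(L_1)\cong Cov(L_2)$, via a graph isomorphism $\varphi$. Modularity and distributivity of lattices of finite length can be recognised through local configurations of covers. For modularity I will use the standard fact that a lattice of finite length is modular if and only if it is both upper and lower semimodular. Upper semimodularity means: whenever $a\neq b$ both cover $c$, some element covers both $a$ and $b$; lower semimodularity is the dual condition. For distributivity I will use the Ward-type criterion quoted above. A modular lattice of finite length is distributive if and only if its covering graph contains no $K_{2,3}$. This condition is purely graph-theoretic, so once modularity transfers, distributivity transfers too.

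Since $\varphi$ need not respect the order, the key step is to show that modularity can be read off $Cov(L)$. I would argue through the Jordan--Dedekind chain condition together with the height function. In a modular lattice of finite length every maximal chain has the same length. So $Cov(L_1)$ is bipartite, with colour classes given by the parity of the height $h$, and $h(x)=d(0,x)$, where $d$ is graph distance. The question is then what $\varphi$ does to $0$.

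First I would show that if $L_1$ is modular, then $\varphi$ is a graded map up to reversal. The natural tool is the distance identity
\[ d(x,y)=h(x\vee y)-h(x\wedge y)=2h(x\vee y)-h(x)-h(y), \]
which holds in modular lattices of finite length (and in semimodular ones, with care). From this, $0$ and $1$ are characterised as a pair of vertices at maximal distance $\ell(L_1)$ such that every vertex lies on a geodesic between them. I would then transport this to $L_2$: the images $\varphi(0),\varphi(1)$ have the same property in $Cov(L_2)$. The aim is to deduce that $\{\varphi(0),\varphi(1)\}=\{0_2,1_2\}$, which requires knowing that only the bounds satisfy this property in $L_2$ as well. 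After possibly replacing $L_2$ by its dual $L_2^d$ (modularity is self-dual), the map $\varphi$ sends covering pairs with $x\prec y$ to covering pairs in the same direction. This is because the heights $d(0,\cdot)$ are preserved.

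Once orientation is preserved, upper and lower semimodularity are visibly preserved by $\varphi$, since they are statements about covers that are oriented by height. Hence $L_2$ is modular, and by symmetry the converse holds.

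The main obstacle is the step identifying the bounds of $L_2$ without knowing in advance that $L_2$ is graded. The difficulty is that in $L_2$ a vertex $v$ with $d(\varphi(0),v)=k$ might not have height $k$. I would try to handle it by induction on length. Take a coatom-free argument instead: show directly that every vertex of $Cov(L_2)$ at distance $1$ from $\varphi(0)$ is comparable-adjacent in the right direction. To do this I would use the square (4-cycle) structure of $Cov(L_1)$, where every pair of neighbours of a vertex at the same level lies in a unique 4-cycle spanning upward or downward. This structure should force $L_2$ to satisfy the Jordan--Dedekind condition. If this fails, I would fall back on Jakubík's original route, which is Theorem~\ref{jak1}. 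That route decomposes the isomorphism into a product of pieces on which $\varphi$ is order preserving or reversing. Since $L_2\cong A^d\times B$, modularity and distributivity pass through products and duals immediately.
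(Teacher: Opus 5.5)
The paper gives no proof of this statement (it is quoted from Jakubik), so I am judging your plan on its own. Its central step fails. The property you use to ``characterise'' $\{0,1\}$ is having two vertices at distance $\ell(L_1)$ with every vertex on a geodesic between them. That is not a characterisation. In a modular lattice $A\times B$ the pair $(1_A,0_B),(0_A,1_B)$ also has this property; in $\mathbf{2}\times\mathbf{2}$ the two atoms already do.

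Worse, the conclusion $\{\varphi(0),\varphi(1)\}=\{0_{L_2},1_{L_2}\}$ is false in general. If it held (with $L_2$ modular), your height argument would show that $\varphi$ preserves or reverses every cover, so $\varphi$ would be an isomorphism $L_1\cong L_2$ or $L_1\cong L_2^d$. Now let $N$ be the four-element Boolean lattice with a new top element adjoined, and put $L_1=N\times N$, $L_2=N^d\times N$. The covering graph of a product is the Cartesian product of the covering graphs, and $Cov(N^d)=Cov(N)$, so $Cov(L_1)\cong Cov(L_2)$. Both lattices are distributive. Yet $L_1$ has four atoms, while $L_2$ and $L_2^d$ each have three. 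So no covering-graph isomorphism between them sends bounds to bounds, and there is no global orientation of covers to preserve. This is exactly why Theorem~\ref{jak1} has the form $L_1\cong A\times B$, $L_2\cong A^d\times B$: the graph isomorphism keeps the direction of covers in one factor and reverses it in the other.

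Your fallback does not repair this. Theorem~\ref{jak1} assumes that \emph{both} $L_1$ and $L_2$ are modular, which is precisely what you must prove for $L_2$, so invoking it is circular. The remaining idea, that the 4-cycle structure ``should force'' the Jordan--Dedekind condition in $L_2$, is where the whole content of the theorem lies, and it is left unproved. Moreover, from $Cov(L_2)$ alone you cannot tell which two neighbours of a vertex are ``at the same level''.

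What you need is a property of the covering graph with three features: it is invariant under graph isomorphism, it refers neither to the bounds nor to the direction of covers, and it forces modularity for lattices of finite length. Upper or lower semimodularity on its own cannot play this role, since $S_7$ and $S_7^d$ have the same covering graph. Your reduction of the distributive case to the modular one via Ward's $K_{2,3}$ criterion is fine once modularity is known to transfer.
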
 
	
	Along similar lines, Duffus and Rival \cite{Duff} modified some of those results and solved the problem for those lattices of finite length that are determined by the ordered subset of their atoms and dual atoms. Duffus and Rival essentially proved the following result.
	
	\begin{theorem}[{\cite[Theorem 3.3]{Duff}}]{\label{duff}}
		Let $L_1$ and $L_2$ be two lattices of finite length such that $Cov(L_1) \cong Cov(L_2)$. Then  $L_1$ is atomistic and dual atomistic if and only if $L_2$ is atomistic and dual atomistic. Moreover, if this condition is satisfied then there are lattices $ A, B $ such that $L_1 \cong A \times B $ and $L_2 \cong A^d \times B$. 
	\end{theorem}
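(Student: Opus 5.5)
The plan is to fix a graph isomorphism $\varphi\colon Cov(L_1)\to Cov(L_2)$ and to read the order of $L_2$ off $L_1$ through the vertex $u:=\varphi^{-1}(0_{L_2})$. The idea is that $u$ plays the role of the "origin" of a product decomposition: the part of $L_1$ below $u$ gets turned upside down and the part above $u$ is kept. The expected outcome is
\[
L_1\cong [0,u]\times[u,1], \qquad L_2\cong [0,u]^d\times[u,1],
\]
so we take $A=[0,u]$ and $B=[u,1]$.

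First I will collect graph-invariant data. In a lattice of finite length, the neighbours of $0$ in the covering graph are exactly the atoms. So $\varphi^{-1}$ sends the atoms of $L_2$ bijectively onto
\[
N(u)=\{\text{lower covers of }u\}\cup\{\text{upper covers of }u\}.
\]
Similarly, $v:=\varphi^{-1}(1_{L_2})$ has as neighbours the preimages of the coatoms.

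Second, I will use the elementary fact that in a lattice of finite length, $0$ is the unique vertex $w$ with the following property: every vertex $x$ is joined to $w$ by a geodesic in $Cov$ that is monotone. Using atomisticity, I expect to prove that a vertex with this property in $Cov(L_1)$ forces an interval splitting at $u$. Concretely, the lower covers of $u$ and the upper covers of $u$ should generate, by joins inside $[0,u]$ and $[u,1]$ respectively, the whole of those intervals.

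Third, I will produce a complement $c$ of $u$, with $u\wedge c=0$ and $u\vee c=1$, using the dual atomistic hypothesis applied to $v$ symmetrically. I then show that
\[
x\mapsto (x\wedge u,\ x\wedge c)
\]
is a lattice isomorphism $L_1\to[0,u]\times[0,c]\cong[0,u]\times[u,1]$. The map $\varphi$ then transports this to $L_2$, with the first factor reversed, because going up from $u$ toward $0_{L_1}$ corresponds to going up from $0_{L_2}$.

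The "moreover" transfer of atomisticity and dual atomisticity then follows directly. A product $A\times B$ is atomistic and dual atomistic if and only if both factors are, and these two properties together are self-dual, so $A^d\times B$ inherits them. For the first assertion proper, I argue symmetrically: assuming $L_1$ is atomistic and dual atomistic, I get the decomposition, and from it the conclusion for $L_2$.

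The main obstacle is the second and third steps: showing from purely graph-theoretic information that $u$ is a direct-factor (neutral, complemented) element. My first attempt would be an induction on the length of $L_1$. I would take a cover $p\prec u$ or $u\prec p$ in $N(u)$ and consider the atom $\varphi(p)$ of $L_2$. Then I would compare the subgraphs of $Cov(L_1)$ and $Cov(L_2)$ corresponding to the principal filter $\uparrow\varphi(p)$, which is again atomistic and dual atomistic. Here I would use the cycle structure of covering graphs, in particular that 4-cycles through $0$ come from pairs of atoms, to show that these local pieces fit together as the squares of a product. That patching argument, in the style of Jakubik, is where I expect most of the work to lie.
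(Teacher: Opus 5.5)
The paper does not prove this statement; it is quoted from Duffus and Rival. Your proposal therefore has to stand on its own, and it has a genuine gap. Two parts are fine: the observation $\varphi^{-1}(At(L_2))=N(u)$, and the closing bookkeeping about products. Every other step is announced rather than proved.

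The content of the theorem lies in two claims. First, $u=\varphi^{-1}(0_{L_2})$ is a complemented \emph{neutral} element of $L_1$. Second, $\varphi$ reverses exactly the covers in the $[0,u]$-direction and preserves those in the $[u,1]$-direction. Your sentence ``the map $\varphi$ then transports this to $L_2$, with the first factor reversed'' simply restates this conclusion.

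Producing a complement of $u$ does not help by itself. In $M_3$ an atom $u$ has a complement $c$, yet $x\mapsto(x\wedge u,x\wedge c)$ sends both $0$ and the third atom to $(0,0)$. You never indicate where neutrality of $u$ would come from. Invoking a patching argument ``in the style of Jakubik'' does not supply it either, since that argument is built on modularity, which is not available here.

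The specific tools you propose also fail.

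(i) The ``elementary fact'' is false. Take a chain $0\prec a_1\prec a_2\prec a_3\prec a_4\prec 1$ plus one more element $c$ with $0\prec c\prec 1$. The only geodesic from $0$ to $a_4$ in the covering graph is $0,c,1,a_4$, which is not monotone. Moreover, monotonicity is an order notion, not a graph invariant, so it could not single out $u$ in $Cov(L_1)$ in any case.

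(ii) Your induction step assumes that $\uparrow\varphi(p)$ is again atomistic and dual atomistic. This is false in general. Take the lattice of all intersections of the sets $\{p,q,r\}$, $\{p,r,s\}$, $\{p,t\}$, $\{q,s\}$, $\{r,t\}$, ordered by inclusion with top $\{p,q,r,s,t\}$. It is atomistic and dual atomistic. But in $\uparrow\{p\}$ the element $\{p,q,r\}$ lies above only one atom of that filter, namely $\{p,r\}$, so the filter is not atomistic.

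The step is also circular. It presupposes that $L_2$ is atomistic and dual atomistic, which is the half of the statement you intend to deduce from the decomposition.

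Finally, before the decomposition is established, $\varphi^{-1}(\uparrow\varphi(p))$ is not known to be an interval or sublattice of $L_1$. So there is nothing to apply an induction hypothesis to.

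A proof must derive both the transfer of the hypotheses and the splitting at $u$ from the hypotheses on $L_1$ and the graph isomorphism alone. Your sketch contains no mechanism that does this.
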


	Later on, the same result was extended by Stern \cite{stern} to the class of balanced lattices and gave a common generalization and essentially proved the following result.
	
	\begin{theorem}[{\cite[Theorem 7]{stern}}]
		Let $L_1$ and $L_2$  be  lattices of finite length such that $Cov(L_1) \cong Cov(L_2)$. Then $L_1$ is balanced if and only if $L_2$ is balanced. Moreover, if this condition is satisfied then there are sublattices $A$ and $B$ of $L_1$ such that $L_1 \cong A \times B$ and $L_2 \cong A^d \times B$.
	\end{theorem}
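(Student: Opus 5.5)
The plan follows the Jakubík--Duffus--Rival strategy: compare the two orders through the graph isomorphism edge by edge, and show that the balanced hypothesis forces the set of reversed edges to split the lattice as a direct product. Throughout I use Stern's notion: a lattice $L$ of finite length is \emph{balanced} if for all $x,y\in L$, $x\wedge y=0$ if and only if $x\vee y=1$. If this recollection of the definition is off, the only step to adjust is the translation of the hypothesis into graph language in the second paragraph.

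Fix a graph isomorphism $\varphi\colon Cov(L_1)\to Cov(L_2)$. Each edge $\{x,y\}$ of $Cov(L_1)$ with $x\prec y$ carries an orientation in $L_1$. It also carries one in $L_2$: either $\varphi(x)\prec\varphi(y)$ (the edge is \emph{preserved}) or $\varphi(y)\prec\varphi(x)$ (it is \emph{reversed}). The first step is local. Consider a covering square $x\prec a,b\prec y$ in $L_1$, that is, a $4$-cycle of the covering graph whose vertices form an interval of length $2$. I would show that opposite edges of such a square have the same type (both preserved or both reversed). The reason is that $\varphi$ maps the square to a $4$-cycle of $Cov(L_2)$. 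A $4$-cycle in the covering graph of a lattice of finite length either is a length-$2$ interval or is ``crooked''. Comparing the possible orientations of the image then rules out mixed types on opposite sides.

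The second step globalises this. I would show that in a balanced lattice of finite length any two maximal chains between two elements are connected by a sequence of elementary moves across covering squares. From this, the set $P$ of preserved edges and the set $R$ of reversed edges behave like two independent ``coordinate directions''. Put
\[
A=\{x\in L_1 : 0 \text{ is joined to } x \text{ by a path of reversed edges}\},
\]
and define $B$ in the same way using preserved edges. I would then prove that $x\mapsto (x_A,x_B)$ is an isomorphism $L_1\cong A\times B$. Here $x_A$ and $x_B$ are obtained by pushing the $R$-steps, respectively the $P$-steps, of a maximal chain from $0$ to $x$ to the front of the chain. The same construction, read inside $L_2$ through $\varphi$, gives $L_2\cong A^d\times B$, because exactly the $R$-edges change orientation.

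The balanced property enters in two places. First, it guarantees the square-exchange connectivity above. Second, it is needed for the equivalence ``$L_1$ balanced iff $L_2$ balanced''. For that equivalence I would recast the condition in terms of the covering graph: $0$ and $1$ are the vertices at distance equal to the length of $L$, and disjointness of elements corresponds to geodesic configurations through $0$ and $1$. Such conditions are invariant under $\varphi$ once one checks that $\varphi$ sends $\{0,1\}$ to $\{0,1\}$.

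I expect the main obstacle to be the first step: classifying the images of covering squares under $\varphi$. In arbitrary lattices a $4$-cycle of $Cov(L_2)$ need not be an interval; $N_5$-type pieces produce crooked cycles. Excluding these, or controlling them using the balanced condition, is where the real work lies. I would attack it first by showing that in a balanced lattice every $4$-cycle of the covering graph is a genuine length-$2$ interval, and only then run the propagation argument.
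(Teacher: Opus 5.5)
The paper gives no proof of this statement; it is quoted from Stern. I am therefore judging your plan on its own, and it has genuine gaps.

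\textbf{The definition and the invariance argument.} First, the hypothesis is misidentified. The condition `$x\wedge y=0$ iff $x\vee y=1$' fails for $x=0$ and any $y\neq 1$. With your definition only lattices with at most two elements are balanced, and the theorem becomes vacuous. You invoke the hypothesis at every load-bearing step, so none of those steps can currently be checked.

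Second, the invariance argument rests on a false claim. A graph isomorphism need not send $\{0,1\}$ to $\{0,1\}$, and the conclusion of the theorem describes exactly this situation. Let $A$ be the lattice $L_1$ of Example~\ref{modularexample}. Since $Cov(X\times Y)$ is the Cartesian product of $Cov(X)$ and $Cov(Y)$, and $Cov(A^d)=Cov(A)$, the distributive lattices $A\times A$ and $A^d\times A$ have the same covering graph. Yet $0$ has degree $4$ in the first graph, while $0$ and $1$ both have degree $3$ in the second. Your metric description of $\{0,1\}$ also fails: in $\mathbf{2}\times\mathbf{2}$ both pairs of opposite vertices lie at distance $2=\ell$. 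So the equivalence `balanced iff balanced' needs a different route. One option is to derive the decomposition from the hypothesis on $L_1$ alone and then transfer the property to $A^d\times B$.

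\textbf{Where the real difficulty lies.} Step~1 needs no hypothesis at all. A $4$-cycle in the covering graph of any lattice cannot have two minima and two maxima, because the join of the two minima would equal both maxima. It also cannot split into increasing paths of lengths $1$ and $3$, because a cover cannot be refined. So every $4$-cycle has the form $x\prec a,b\prec y$. Checking which image vertex is the bottom then gives `opposite edges have the same type' directly. The lemma you planned to prove first, that such a cycle spans a length-$2$ interval, is false in $M_3$, but you do not need it.

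The real gap is in Steps~2--3.
\begin{itemize}
\item Connectivity of maximal chains under square moves forces every interval to be graded, since each move preserves length.
\item The paper presents Stern's theorem as a common generalization of Theorems~\ref{jak1} and~\ref{duff}, and atomistic, dually atomistic lattices need not be graded. For example, take $\mathbf{2}^3$ with an extra element $r$, $0\prec r\prec 1$, incomparable to all other elements. This lattice has maximal chains of lengths $2$ and $3$ and contains an induced $5$-cycle, i.e.\ a non-square cell.
\item On such cells, squares say nothing about edge types. Rotating $Cov(N_5)=C_5$ already gives a graph isomorphism whose preserved/reversed pattern is mixed on a single cell. This is exactly where the correct hypothesis must act, and you do not say how.
\end{itemize}
Finally, even for graded modular lattices, you assert rather than prove the following, which are the real content of Jakubik's theorem:
\begin{itemize}
\item $x_A$ and $x_B$ do not depend on the chosen chain;
\item $A$ and $B$ are sublattices;
\item $x\mapsto(x_A,x_B)$ is a lattice isomorphism onto $A\times B$, and likewise for $L_2$ onto $A^d\times B$.
\end{itemize}
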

	
	This motivates us to think about the same problems for the comparability graphs, i.e., to find a necessary and sufficient condition on lattices $L_1$ and $L_2$ such that graph isomorphism gives lattice isomorphism. Precisely, we have:
	
	\textbf{Isomorphism Problem: Find a class $ \mathscr{L} $ of lattices such that $Com(L_1) \cong Com(L_2)$ if and only if $L_1 \cong L_2$, for $L_1, L_2 \in \mathscr{L} $.}
	
	Note that the covering graph isomorphism of posets does not imply the comparability graph isomorphism. It is clear from the following figure in which $P_1$ and $P_2$ have the same covering graphs but different comparability graphs. 
	
	\begin{center}
		
		\begin{tikzpicture}[scale =0.6]
			
			\draw [fill=black] (0,3) circle (.1); 
			\draw [fill=black] (0,0) circle (.1); 
			\draw [fill=black] (0,2) circle (.1);  
			\draw [fill=black] (0,1) circle (.1); 
			\draw [fill=black] (1,2) circle (.1); 
			\draw [fill=black] (1,0) circle (.1);
			
			\draw (0,0)--(0,1)--(0,2)--(0,3);
			
			\draw
			(0,0)--(1,2)--(1,0)--(0,2);
			
			\draw node [below] at (0.5,-0.3) {$P_1$};
			
			\begin{scope}[shift={(4,0)}]
				
				\draw [fill=black] (-1,1) circle (.1); 
				\draw [fill=black] (0,0) circle (.1); 
				\draw [fill=black] (0,2) circle (.1);  
				\draw [fill=black] (0,1) circle (.1); 
				\draw [fill=black] (1,2) circle (.1); 
				\draw [fill=black] (1,0) circle (.1); 
				
				\draw (0,0)--(-1,1);
				
				\draw (0,0)--(0,1)--(0,2);
				
				\draw
				(0,0)--(1,2)--(1,0)--(0,2);
				
				\draw node [below] at (0.5,-0.3) { $P_2$};
				
			\end{scope}
			
			
		\end{tikzpicture}
		
	\end{center}
	
	A \textit{comparability graph} is a simple, unoriented graph in which two vertices are adjacent if and only if they are comparable with regard to some partial order on the vertices. It is also known as a transitively orientable graph. For example, it is easy to see that every complete graph and a bipartite graph is a comparability graph. 
	
    In this paper, we addressed the following crucial problems about the comparability graph of lattices.

\begin{enumerate}
	\item What are the properties of lattices that are preserved under the graph isomorphism? 
	
	\item To determine the classes of lattices in which the graph isomorphism gives the lattice isomorphism.
\end{enumerate}

	In \cite{rjvj}, Jejurkar and Joshi  unified many results proved for
	connectedness, girth and diameter of the inclusion graphs of algebraic structures given in \cite{SA}, \cite{ad}, \cite{Ad}, \cite{devi}, \cite{JG} using the comparability graphs of the lattice of substructures of the corresponding algebraic structure. One can think about the Isomorphism Problem in these inclusion graphs. There are several approaches to tackle this problem. One approach is to think algebraically, while another is to approach the problem from a lattice theoretic perspective.
	
	With this brief introduction, we begin the paper with some basic definitions and terms.
	
\section{Preliminaries}
    Let $(L;\leq)$ be a lattice. The least element and the greatest element (if they exist) will be denoted by $0_L$ and $1_L$, respectively. A subset $L'$ of $L$ is said to be a \textit{sublattice} of $L$ if it satisfies the property that $a, b \in L'$ implies that $a \vee b, a \wedge b \in L'$ with the $\vee $ and the $ \wedge $ of $L'$ are restrictions to $L'$ of the $\vee $ and the $ \wedge $ of $L$. A lattice $L$ is said to be \textit{bounded} if it has both $0_L$ and $1_L$. Two elements $a, b$  are incomparable, then we denote it by $a \| b$. If any two elements in $L$ are comparable, then $L$ is said to be a \textit{chain}. A chain with $n$ elements is denoted by $C_n$ and the length of $C_n$ is $n-1$. If $L$ has $0_L$ and every chain in $L$ is finite, then the \textit{height} $h(a)$ of an element $a \in L$ is the maximum length of a maximal chain from $0_L$ to $a$. The \textit{length} of $L$, denoted by $\ell(L)$, is defined as the  maximum length of a maximal chain in $L$. If $L$ has $1_L$, then $\ell(L) = h(1_L)$. 
	
	For $x, y \in L$, we write $y \Yleft x$ ($y$ is \textit{covered } by $x$ or $x$ \textit{covers} $y$) if $y < x$ and $y < z \leq x$ implies that $x = z$. An element $ a $ of a lattice $ L $ with $0_L$ is an \textit{atom}, if $0_L \Yleft a$ and it is a \textit{dual atom}, if $a \Yleft 1_L$. \textit{The set of atoms is denoted by $At(L)$ and the set of dual atoms is denoted by $DualAt(L)$.} A lattice $ L $ with $0_L$ is called \textit{atomic}, if for every non-zero element $b$ there exists an atom $a \in L$ such that $ a \leq b$.   An atomic lattice $L$ is called \textit{atomistic} if every nonzero element is a join of atoms. It is called \textit{dual atomistic} if every nonunit element is a meet of dual atoms.
	
	A bijective map $\phi$ from a lattice $L_1$ to a lattice $L_2$ is said to be an isomorphism, if it preserves meet and join operation, i.e., $\phi(a \wedge b) = \phi(a) \wedge \phi(b)$ and $\phi(a \vee b) = \phi(a) \vee \phi(b)$. Equivalently, the bijective map $\phi$ is an \textit{isomorphism}, if it is \textit{bi-order preserving map}, i.e.,  $a \leq b$ if and only if  $\phi(a) \leq \phi(b)$. Two lattices $L_1$ and $L_2$ are isomorphic, then we denote it by $L_1 \cong L_2$.
	
	Let  $(L, \leq)$ be a lattice with the partial order $\leq$. Then the \textit{dual lattice} $(L, \geq)$ of $L$  is the lattice, where the partial order $\geq $ is defined as $x \geq y$ if and only if $x \leq y$ in $L$ for $x, y \in L$. $ L^d$ denotes the dual lattice of $L$. A lattice is said to be \textit{self dual} if $L \cong L^d$.  
	
	A lattice is called \textit{upper semimodular} if and only if it satisfies the \textit{upper covering condition}, that is,
	$a \Yleft b$ implies that $a \vee c \Yleft b \vee c$ or $a \vee c = b \vee c$.	The upper semimodular lattices are also known as semimodular lattices. The dual notion of upper semimodular is \textit{lower semimodular}. A lattice $L$ is called \textit{modular}, if for $a, b, c \in L$, $c \vee (a \wedge b) = (c \vee a) \wedge b$ for all $c \leq b$. Clearly, a modular lattice is upper semimodular as well as lower semimodular. A lattice $L$ is said to be \textit{distributive} if it satisfies $a \wedge (b \vee c) = (a \wedge b) \vee (a \wedge c)$ or $a \vee (b \wedge c) = (a \vee b) \wedge (a \vee c)$ for all $a, b, c \in L$. A lattice $L$ with least element $0$ is said to be \textit{$0$-distributive} if for any $a,b,c \in L$, $a \wedge b = 0 $ and $a \wedge c = 0 $ implies $a \wedge (b \vee c) = 0 $. The dual notion of 0-distributive lattice is \textit{$1$-distributive} lattice. In a bounded lattice $ L $, an element $ a $ is a \textit{complement} of an element $ b $, if $a \vee b = 1_L$ and $a \wedge b = 0_L$ and the lattice $L$ is said to be \textit{complemented} if every element in $L$ has a complement. A \textit{Boolean lattice} is a lattice that is complemented and distributive. For any undefined concepts related to lattice theory can be found in \cite{Birkhoff}, \cite{Gratzer}, and \cite{stern}.
	
	Let $G$ be a graph with the vertex set $V(G)$ and the edge set $E(G)$. A graph $G$ with $V(G) \neq \phi$ is an \textit{edgeless graph} if $E(G)$ is empty. A \textit{simple graph} is a graph having no loops and multiple edges. A graph $ H $ is said to be a \textit{subgraph} of a graph  $G$ if $V(H) \subseteq V(G) $ and $E(H) \subseteq E(G) $.  If all the vertices of $G$ are pairwise adjacent, then $G$ is said to be \textit{complete}. Two graphs $G_1$ and $G_2$ are said to be \textit{isomorphic} if there exists a bijective map, $\phi $, from $V(G_1)$ to $V(G_2)$ such that $(u, v) \in E(G_1)$ if and only if $(\phi (u), \phi (v)) \in E(G_2)$, for any $u, v \in V(G_1)$. A finite graph $G$ is a \textit{path} if its vertices can be ordered so that two vertices are adjacent if and only if they are consecutive in the list and its \textit{length} is the number of edges in it. A path $G$ is a \textit{cycle} if its initial and end vertices coincide. A graph $G$ is \textit{connected}, if each pair of vertices in $G$ belongs to a path, otherwise, $G$ is \textit{disconnected}. Any undefined concepts related to graph theory can be found in \cite{west}.

	\textbf{Throughout this paper, $L$ is a  bounded lattice, and all graphs under consideration are connected graphs.}
	
	Let's start with a formal definition of the comparability graph of a lattice.
	
	\begin{definition}		
		Let $L$ be a bounded lattice. The \textit{ comparability graph} of $L$ is an undirected, simple graph denoted by $Com(L)$, where the vertex set is $L\setminus\{0_L,1_L\}$ and two vertices $a$ and $b$ are adjacent if and only if $a$ and $b$ are comparable, i.e., $a < b$ or $b < a$.
	\end{definition}
	
	\begin{example}
		The following figure shows the lattices $N_5$ and $M_3$ and their comparability graphs.
	\end{example}
	
	\begin{center}
		
		\begin{tikzpicture}[scale =0.5]
			
			\draw [fill=black] (0,0) circle (.1); 
			\draw [fill=black] (-1.5,1.5) circle (.1); 
			\draw [fill=black] (1,1) circle (.1);
			\draw [fill=black] (1,2) circle (.1);
			\draw [fill=black] (0,3) circle (.1);
			
			\draw (0,0)--(-1.5,1.5)--(0,3);
			\draw (0,0)--(1,1)--(1,2)--(0,3);
			
			\draw node [below] at (0,0) {$0$};
			\draw node [left] at (-1.5,1.5) {$c$};
			\draw node [right] at (1,1) {$a$};
			\draw node [right] at (1,2) {$b$};
			\draw node [above] at (0,3) {$1$};
			\draw node [below] at (0,-0.8) {$N_5$};
			
			\begin{scope}[shift={(4,0)}]
				
				\draw [fill=black] (0,1.5) circle (.1); 
				\draw [fill=black] (1,1) circle (.1);
				\draw [fill=black] (1,2) circle (.1);
				
				\draw (1,1)--(1,2);
				
				\draw node [left] at (0,1.5) {$c$};
				\draw node [right] at (1,1) {$a$};
				\draw node [right] at (1,2) {$b$};

				\draw node [below] at (0.5,-0.8) {$Com(N_5)$};
				
			\end{scope}
			\begin{scope}[shift={(10,0)}]
				
				\draw [fill=black] (0,0) circle (.1); 
				\draw [fill=black] (1.5,1.5) circle (.1);
				\draw [fill=black] (0,1.5) circle (.1);
				\draw [fill=black] (-1.5,1.5) circle (.1);
				\draw [fill=black] (0,3) circle (.1);

				\draw (0,0)--(1.5,1.5)--(0,3)--(-1.5,1.5)--(0,0);
				\draw (0,0)--(0,1.5)--(0,3);
				
				\draw node [right] at (-1.5,1.5) {$c$};
				\draw node [right] at (0,1.5) {$b$};
				\draw node [right] at (1.5,1.5) {$a$};
				\draw node [below] at (0,0) {$0$};
				\draw node [above] at (0,3) {$1$};

				\draw node [below] at (0,-0.8) {$M_3$};
				
			\end{scope}
			\begin{scope}[shift={(15,0)}]
				
				\draw [fill=black] (0,1) circle (.1); 
				\draw [fill=black] (1,2) circle (.1); 
				\draw [fill=black] (-1,2) circle (.1); 
				
				\draw node [below] at (0,1) {$a$};
				\draw node [below] at (1,2) {$b$};
				\draw node [below] at (-1,2) {$c$};
				\draw node [below] at (0,-0.8) {$Com(M_3)$};
				
			\end{scope}
			
		\end{tikzpicture}
	\end{center}

\begin{remark}
	A lattice and its dual have isomorphic comparability graphs.	
\end{remark}
	
\section{properties that are preserved under graph isomorphism}

	In order to tackle the Isomorphism Problem for comparability graphs, one should be aware of the lattice theoretical properties that are preserved under the graph isomorphism. It means that if we have two lattices $L_1$ and $L_2$ with isomorphic comparability graphs, then $L_1$ possesses a property if and only if $L_2$ possesses the same property. This helps to get a class of lattices in which graph isomorphism gives lattice isomorphism and vice versa.
	
	Let's see some lattice theoretic properties that are preserved under graph isomorphism.

\begin{theorem} \label{substr}
    Let $L_1$ and $L_2$ be two lattices such that $Com(L_1) \cong Com(L_2)$. Then 
    \begin{enumerate}
      	      	
      	\item $L_1$ contains $N_5$ as a sublattice if and only if $L_2$ also contains it.
      	
      	\item $L_1$ contains $M_3$ as a sublattice if and only if $L_2$ also contains it.
      \end{enumerate}    
\end{theorem}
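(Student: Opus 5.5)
The plan is to translate the problem into one about posets. The graph $Com(L)$ is exactly the comparability graph of the poset $P=L\setminus\{0_L,1_L\}$. We then use Gallai's theorem on transitive orientations: if two posets $P_1,P_2$ on the same vertex set have the same comparability graph, then $P_2$ can be obtained from $P_1$ by a finite sequence of steps. Each step reverses the order inside a single \emph{autonomous set} (module) $M$. Such an $M$ is a set for which every $x\notin M$ is either above all of $M$, below all of $M$, or incomparable to all of $M$. This is where finiteness is used; for infinite lattices I would either assume finite length or invoke Kelly's extension of Gallai's theorem. Since $L_1$ and $L_2$ are recovered from $P_1,P_2$ by adjoining bounds, it suffices to handle one reversal step. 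The intermediate posets need not be lattice interiors, so I would instead track the $N_5$ (resp.\ $M_3$) directly through the decomposition into modules, not through intermediate lattices. By symmetry, only one implication needs proof.

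Next I would record how joins and meets behave relative to an autonomous set $M\subseteq P_1$. Suppose $x,y\in M$ are incomparable. Then any upper bound of $\{x,y\}$ outside $M$ lies above all of $M$. Hence $x\vee y$ is either an element of $M$ or the least upper bound of all of $M$ in $L_1$; call the latter $\bigvee M$, which may be $1_L$. Dually, $x\wedge y$ is either in $M$ or equals $\bigwedge M$. After reversal, the roles of joins and meets computed inside $M$ are swapped. Joins and meets that land outside $M$ keep the same values $\bigvee M$ and $\bigwedge M$. Mixed pairs $x\in M$, $z\notin M$ have the same join and meet in both orders, because $z$ relates uniformly to $M$.

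Now take an $N_5$ sublattice $\{u<a<b<v,\ c\}$ in $L_1$, with $a\wedge c=b\wedge c=u$ and $a\vee c=b\vee c=v$. I would split into cases according to $S=\{u,a,b,c,v\}\cap M$.
\begin{itemize}
\item If $S$ is empty, or is a single element, the five elements form the same sublattice after reversal.
\item If $S\supseteq\{a,b,c\}$, reversing turns the pentagon into its dual. The dual of $N_5$ is again $N_5$, with $u$ and $v$ replaced by the reversed joins and meets. These are either in $M$ or equal $\bigwedge M$ or $\bigvee M$, and they still give a pentagon.
\item If $M$ contains $c$ but not $a$ and $b$, uniformity forces $a,b$ incomparable to all of $M$. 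Then $u,v$ are the meet and join with $M$-elements, and they stay correct.
\item If $M$ contains, say, $a,b$ but not $c$, the segment $a<b$ is reversed. Using $b\in M$ instead of $a$ still gives the pentagon $\{u,b,a,v,c\}$, since the meets and joins with $c$ are unchanged.
\end{itemize}
The case where $M$ contains $u$ or $v$ together with other elements is handled by the uniformity constraint: for example, $u\in M$ together with $a\in M$ forces $c$ into $M$ or below $M$, and the latter contradicts $c\not\ge a$.

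The same case analysis applies to $M_3=\{u,a,b,c,v\}$ with $a,b,c$ pairwise incomparable. It is easier, because $M_3$ is self-dual and every pair among $a,b,c$ has the same join and meet. The main obstacle I expect is the bookkeeping in the mixed cases, where $M$ contains some but not all of the middle elements. There one has to verify that the new joins and meets in $L_2$ still coincide pairwise. Doing so requires replacing $u$ or $v$ by $\bigwedge M$ or $\bigvee M$, or by the reversed in-$M$ meet or join. I would first try to show that the uniformity of $M$ forces $S$ to be an interval-like piece of the pentagon, which cuts the cases down to the four listed above.
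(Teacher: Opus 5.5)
Your case reduction is right: autonomy of $M$ forces $S$ to be empty, a singleton, $\{a,b\}$, or a superset of $\{a,b,c\}$. For a single reversal that takes a lattice interior to a lattice interior, your computations do carry the pentagon (and the diamond) across.

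\textbf{The gap is the claim that it suffices to handle one reversal step.} Gallai's theorem gives a chain $P_1=Q_0,Q_1,\dots,Q_k=P_2$ in which $Q_{i+1}$ reverses a set autonomous in $Q_i$. Nothing guarantees that $Q_i\cup\{0,1\}$ is a lattice. For instance, take the lattice $0<x,y<m<z_1,z_2<1$ with $x\| y$ and $z_1\| z_2$. The set $\{x,y,m\}$ is autonomous, but after reversing it $x$ and $y$ have two minimal upper bounds, $z_1$ and $z_2$. At such a stage ``contains $N_5$ as a sublattice'' has no meaning, and the quantities your computation uses can fail to exist.

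Concretely, take the case $S\supseteq\{a,b,c\}$ with $v\notin M$. There the new meet of $a$ and $c$ is the largest element lying strictly below every element of $M$. That element is $\bigwedge M$ only when $\bigwedge M\notin M$; if $M$ has a least element, it is a different element and need not exist. So your rule that outside joins and meets ``keep the values $\bigvee M$ and $\bigwedge M$'' is wrong in general.

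You acknowledge the problem but give no invariant that survives non-lattice stages. ``Tracking through the decomposition into modules'' would require three things the proposal does not contain:
\begin{enumerate}
\item the full modular-decomposition form of Gallai's theorem, with simultaneous reorientations at the prime and series nodes of one tree applied to $P_1$;
\item an analysis of where $u,a,b,c,v$ sit in that tree;
\item a computation of meets and joins directly in $L_2$.
\end{enumerate}

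There is also a finiteness problem. The theorem is stated for arbitrary bounded lattices, and finite length does not make $L\setminus\{0,1\}$ finite (e.g.\ $M_\kappa$ with $\kappa$ infinite, or subspace lattices over infinite fields). For infinite posets with infinitely nested modules, a finite sequence of reversals need not exist. A small slip as well: $c>u\in M$ forces $c$ \emph{above} $M$, not below.

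The paper avoids all of this by working directly with $\phi$ and using that \emph{both} $L_1$ and $L_2$ are lattices. In your notation, assume $\phi(a)<\phi(b)$ (up to swapping $a,b$), so $\phi(c)$ is incomparable to both. Suppose $\phi(c)\wedge\phi(a)<\phi(c)\wedge\phi(b)=\phi(w)$.
\begin{itemize}
\item Then $w$ is comparable with $b$ and with $c$, and since $b\| c$ it cannot lie between them.
\item Hence $w\le b\wedge c=u<a$ or $w\ge b\vee c=v>a$. Either way $w$ is comparable with $a$, so $\phi(w)$ is comparable with $\phi(a)$.
\item $\phi(w)$ cannot lie above $\phi(a)$, since then $\phi(a)<\phi(w)\le\phi(c)$.
\item So $\phi(w)<\phi(a)$, giving $\phi(w)\le\phi(c)\wedge\phi(a)$, a contradiction.
\end{itemize}
Joins and the $M_3$ case go the same way, with no finiteness and no decomposition theory. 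As it stands, your argument proves a lemma about one reversal between two lattice interiors, not the theorem.
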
	

\begin{proof}
	Let $L_1$ and $L_2$ be two lattices such that $Com(L_1) \cong Com(L_2)$ and $\phi$ be a graph isomorphism from $Com(L_1)$ to $Com(L_2)$.

   \textbf{Proof of (1):} Suppose that $L_1$ contains  a sublattice $A =\{a_1,a_2,a_3,a_1\wedge a_3, a_2\vee a_3\} $   isomorphic to $N_5$, where $a_1 < a_2$, $a_3\Vert a_1$, $a_3\Vert a_2$, $a_1\wedge a_3 = a_2\wedge a_3$ and $a_1\vee a_3 = a_2\vee a_3$. Since $a_1 < a_2$, i.e., $a_1 \sim a_2$ in $Com(L_1)$, we have $\phi(a_1) \sim \phi(a_2)$ in $Com(L_2)$, i.e., either $\phi(a_1) < \phi(a_2)$ or $ \phi(a_2) < \phi(a_1) $.
   
   Without loss of generality, we assume that $\phi(a_1) < \phi(a_2)$. As $a_3\Vert a_1$ and  $a_3\Vert a_2$, we have $a_3 \nsim a_1$ and $a_3 \nsim a_2$ in $Com(L_1)$. This gives $\phi(a_3) \nsim \phi(a_1)$ and $\phi(a_3) \nsim \phi(a_2)$ in $Com(L_2)$, i.e., $\phi(a_3) \Vert \phi(a_1)$  and $\phi(a_3) \Vert \phi(a_2)$. Hence  $\phi(a_3) \wedge \phi(a_1) < \phi(a_1) < \phi(a_2)$ and $\phi(a_3) \wedge \phi(a_1) < \phi(a_3)$.  This gives $\phi(a_3) \wedge \phi(a_1) \le \phi(a_3) \wedge \phi(a_2)$. 
   
   Now, we claim that  $\phi(a_3) \wedge \phi(a_1) = \phi(a_3) \wedge \phi(a_2)$. 
   
   Suppose on the contrary that $\phi(a_3) \wedge \phi(a_1) \neq \phi(a_3) \wedge \phi(a_2)$. Hence $\phi(a_3) \wedge \phi(a_1) < \phi(a_3) \wedge \phi(a_2)$. This proves that $\phi(a_3) \wedge \phi(a_2) \in V(Com(L_2))$.
   
   Let $c$ be an element of $L_1$ such that $\phi(c) = \phi(a_3) \wedge \phi(a_2)$. As $\phi(a_3) \sim \phi(a_3) \wedge \phi(a_2) = \phi(c) $  and $\phi(a_2) \sim \phi(a_3) \wedge \phi(a_2) = \phi(c) $, we have $a_3 \sim c$ and $a_2 \sim c$. Since $a_3 \| a_2$, we can not have $a_3 < c < a_2$ and  $ a_2 < c < a_3$. Therefore we consider the following cases.
   
   \textbf{Case(1):} If $ c < a_3  $ and $ c < a_2$, then $ c \leq a_3 \wedge a_2 $. However, $a_3 \wedge a_2 = a_3 \wedge a_1$ gives $c \leq a_3 \wedge  a_1 < a_1$, i.e., $a_1 \sim c$. This gives $ \phi(a_1) \sim \phi(c) $, i.e.,  $ \phi(a_1) \sim \phi(a_3) \wedge \phi(a_2)$. Hence we have either $ \phi(a_1) < \phi(a_3) \wedge \phi(a_2)$ or $ \phi(a_3) \wedge \phi(a_2) < \phi(a_1)$. If $ \phi(a_1) < \phi(a_3) \wedge \phi(a_2)$, then $\phi(a_3) \wedge \phi(a_2) < \phi(a_3)$ gives $\phi(a_1) < \phi(a_3)$, i.e., $\phi(a_1) \sim \phi(a_3)$, a contradiction. Thus, we must have $ \phi(a_3) \wedge \phi(a_2) < \phi(a_1)$. Since $\phi(a_3) \nsim \phi(a_2)$, we have $ \phi(a_3) \wedge \phi(a_2) < \phi(a_3)$, we have $ \phi(a_3) \wedge \phi(a_2) \leq \phi(a_3) \wedge \phi(a_1)$. Hence  $ \phi(a_3) \wedge \phi(a_1) = \phi(a_3) \wedge \phi(a_2)$.
   
   \textbf{Case(2):} If $a_3 < c$ and $a_2 < c$, then $a_3 \vee a_2 \leq c $. However, $a_3 \vee a_2 = a_3 \vee a_1$ gives $a_1 < a_3 \vee a_1 \leq c$, i.e., $a_1 \sim c$. This implies that $ \phi(a_1) \sim \phi(c) $, i.e.,  $ \phi(a_1) \sim \phi(a_3) \wedge \phi(a_2)$. Therefore we have either $ \phi(a_1) < \phi(a_3) \wedge \phi(a_2)$ or $ \phi(a_3) \wedge \phi(a_2) < \phi(a_1)$.  If $ \phi(a_1) < \phi(a_3) \wedge \phi(a_2)$, then $\phi(a_3) \wedge \phi(a_2) < \phi(a_3)$ gives $\phi(a_1) < \phi(a_3)$, i.e., $\phi(a_1) \sim \phi(a_3)$, a contradiction. Thus, we must have $\phi(a_3) \wedge \phi(a_2) <  \phi(a_1)$. Since $\phi(a_3) \wedge \phi(a_2) <  \phi(a_3)$. Hence $ \phi(a_3) \wedge \phi(a_1) \geq \phi(a_3) \wedge \phi(a_2)$. Hence $ \phi(a_3) \wedge \phi(a_1) = \phi(a_3) \wedge \phi(a_2)$.
   
   Thus, from both the cases,  $ \phi(a_3) \wedge \phi(a_1) = \phi(a_3) \wedge \phi(a_2)$.
   
   Again, we have $ \phi(a_1) < \phi(a_2) < \phi(a_3) \vee \phi(a_2)$ and $\phi(a_3) < \phi(a_3) \vee \phi(a_2)$ gives $\phi(a_3) \vee \phi(a_1) \le \phi(a_3) \vee \phi(a_2)$. Arguing on the similar way as above, we get  $\phi(a_3) \vee \phi(a_1) = \phi(a_3) \vee \phi(a_2)$.
   
   Thus, the set $A' = \{\phi(a_1), \phi(a_2), \phi(a_3), \phi(a_1) \wedge \phi(a_3), \phi(a_2) \vee \phi(a_3)\}$ forms a sublattice of $L_2$ isomorphic to $N_5$.

   \textbf{Proof of (2):} Suppose that  $L_1$ contains $M_3$ as a sublattice. Let $A = \{a, b, c, a \wedge b = b \wedge c = a \wedge c = a \wedge b \wedge c, a \vee b = b \vee c = a \vee c = a \vee b \vee c \}$ be a sublattice of $L_1$ isomorphic to $M_3$, where $a \| b, a\| c, b\|c$.
   
   Since $a \| b, a\| c$ and $b\|c$, we have $ \phi(a) \| \phi(b),  \phi(a)\| \phi(c)$ and $\phi(b)\|\phi(c)$. Clearly, $\phi(a) \wedge \phi(b) \wedge \phi(c) \leq \phi(a) \wedge \phi(b)$. We claim that $\phi(a) \wedge \phi(b) = \phi(a) \wedge \phi(b) \wedge \phi(c)$. Suppose on the contrary that $\phi(a) \wedge \phi(b) \wedge \phi(c) < \phi(a) \wedge \phi(b)$. Hence $\phi(a) \wedge \phi(b) \neq 0$. Thus $\phi(a) \wedge \phi(b) \in V(Com(L_2))$.
   
   Let $\phi(a) \wedge \phi(b) = \phi(d)$, for some $d \in L_2$. Since $\phi(a) \wedge \phi(b) \sim \phi(a),  \phi(b)$, we have $\phi(d) \sim \phi(a), \phi(b)$ and hence $d \sim a$ and  $d \sim b $. Clearly, $a \leq d \leq b$ and $b \leq d \leq a$ will not be possible. Therefore we consider the following cases.
   
   \textbf{Case(1):} If $ d\leq a$ and $ d\leq b$, then $ d \leq a \wedge b = b \wedge c < c $ gives $d \leq c$, i.e., $d \sim c$. Hence $\phi(d) \sim \phi(c)$, i.e., $\phi(a) \wedge \phi(b) \sim \phi(c)$. If $\phi(c) \leq \phi(a) \wedge \phi(b)$, then $\phi(c) \leq \phi(a) $, a contradiction to $a \| c$. Hence we have $\phi(a) \wedge \phi(b) \leq  \phi(c)$ this gives $\phi(a) \wedge \phi(b)  = \phi(a) \wedge \phi(b) \wedge \phi(c)$. 
   
   \textbf{Case(2):} If $a \leq d$ and $b \leq d$, then $a \vee b \leq d$. This together with $c < a\vee c = a\vee b \leq d$ gives $\phi(c) \sim \phi(d)$, i.e., $ \phi(c) \sim  \phi(a) \wedge \phi(b)$. If $\phi(c) \leq \phi(a) \wedge \phi(b)$, then $\phi(c) \leq \phi(a)$, a contradiction to $a \| c$. Hence we have $\phi(a) \wedge \phi(b) \leq  \phi(c)$. This gives $\phi(a) \wedge \phi(b)  = \phi(a) \wedge \phi(b) \wedge \phi(c)$. 
   
   Thus, from both cases, we have $\phi(a) \wedge \phi(b)  = \phi(a) \wedge \phi(b) \wedge \phi(c)$.
   
   Again, we have $\phi(a) \vee \phi(b) \leq \phi(a) \vee \phi(b) \vee \phi(c)$. Arguing on the similar way as above, we have $\phi(a) \vee \phi(b) = \phi(a) \vee \phi(b) \vee \phi(c)$.
   
   On the similar lines, we can show that $\phi(a) \wedge \phi(c)= \phi(b) \wedge \phi(c)  = \phi(a) \wedge \phi(b) \wedge \phi(c)$ and $\phi(a) \vee \phi(c)= \phi(b) \vee \phi(c) = \phi(a) \vee \phi(b) \vee \phi(c)$ and hence the set $A' = \{\phi(a), \phi(b), \phi(c), \phi(a) \wedge \phi(b) = \phi(a) \wedge \phi(c) = \phi(b) \wedge \phi(c), \phi(a) \vee \phi(b)= \phi(a) \vee \phi(c) = \phi(b) \vee \phi(c)\}$ forms a sublattice of $L_2$ isomorphic to $M_3$. 
\end{proof}	
	
\begin{theorem}[{\cite[p. 58]{Gratzer}}] \label{pentagon}
	A lattice $ L $ is modular if and only if $ L $ does not contain a pentagon ($N_5$) as a sublattice.
\end{theorem}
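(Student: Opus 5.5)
The statement is Dedekind's classical characterization, cited from Gr\"atzer. I would prove both directions directly.

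\emph{Modular implies no pentagon.} Modularity is an identity-type condition (a Horn condition with hypothesis $c\le b$), so it is inherited by every sublattice. It therefore suffices to check that $N_5$ itself is not modular. Label $N_5$ as in the figure: $0<a<b<1$ and $c$ incomparable to $a,b$. Take the triple $c'=a\le b$ together with $c$. Then
\[ a\vee(c\wedge b)=a\vee 0=a, \qquad (a\vee c)\wedge b=1\wedge b=b\neq a, \]
so $N_5$ violates the modular law. Hence a modular lattice cannot contain $N_5$ as a sublattice.

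\emph{No pentagon implies modular.} I argue by contraposition. Suppose $L$ is not modular, and pick $a\le b$ and $c$ with $a\vee(c\wedge b)\neq(a\vee c)\wedge b$. The inequality $a\vee(c\wedge b)\le(a\vee c)\wedge b$ always holds whenever $a\le b$, so here it is strict. Set
\[ u=a\vee(c\wedge b) \;<\; v=(a\vee c)\wedge b. \]
The claim is that $\{c\wedge b,\ u,\ v,\ c,\ a\vee c\}$ is a sublattice isomorphic to $N_5$, with $u<v$ and $c$ incomparable to both.

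The key computations, which I expect to be the only real work, are $c\wedge u=c\wedge v=c\wedge b$ and $c\vee u=c\vee v=a\vee c$. They are sandwich arguments:
\[ c\wedge b\le c\wedge u\le c\wedge v\le c\wedge b, \qquad a\vee c\le c\vee u\le c\vee v\le a\vee c. \]
The first chain uses $v\le b$ and $c\wedge b\le u$. The second uses $a\le u$ and $v\le a\vee c$.

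Incomparability follows from these identities. If $c\le v$, then $c=c\wedge v=c\wedge u\le u$. If $c\le u$ or $c\ge u$, the meet and join identities would collapse $u$ and $v$; for example, $u\le c$ gives $u=c\wedge u=c\wedge v$, and then $v\le c\vee u$ combined with the join identity forces $u=v$. A short case check handles the remaining comparabilities, and each one contradicts $u<v$.

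Once incomparability is established, the five elements are distinct and closed under $\wedge$ and $\vee$, so they form a copy of $N_5$ inside $L$. This completes the contrapositive and hence the equivalence.
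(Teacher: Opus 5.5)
Your argument is correct and is the standard proof of Dedekind's theorem from the cited source; the paper itself only quotes the result from Gr\"atzer and gives no proof. Your two halves match it: modularity passes to sublattices and fails in $N_5$, and a strict failure $u<v$ of the modular law yields the pentagon $\{c\wedge b,u,v,c,a\vee c\}$ via exactly the sandwich identities you wrote down. The one step you left compressed, the incomparability check, closes cleanly as follows.
\begin{itemize}
\item If $u\le c$, then $c\vee v=c\vee u=c$, so $v\le c$, and hence $v=c\wedge v=c\wedge u=u$.
\item If $c\le u$, then $v\le c\vee v=c\vee u=u$.
\item The cases $c\le v$ and $v\le c$ reduce to the two above: $c\le v$ gives $c=c\wedge v=c\wedge u\le u$, and $v\le c$ gives $u\le c$.
\end{itemize}
So every comparability between $c$ and $u$ or $v$ contradicts $u<v$.
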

	
\begin{corollary}\label{modular}
	Let $L_1$ and $L_2$ be two lattices such that $Com(L_1) \cong Com(L_2)$. Then $L_1$ is modular if and only if $L_2$ is modular.
\end{corollary}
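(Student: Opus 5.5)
The corollary follows by chaining Theorem~\ref{pentagon} with part (1) of Theorem~\ref{substr}. I would argue the contrapositive in one direction and appeal to symmetry for the other.

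Suppose $Com(L_1) \cong Com(L_2)$ and $L_1$ is modular, and assume for contradiction that $L_2$ is not modular. By Theorem~\ref{pentagon}, $L_2$ contains a sublattice isomorphic to $N_5$. Graph isomorphism is symmetric: if $\phi$ is a graph isomorphism from $Com(L_1)$ to $Com(L_2)$, then $\phi^{-1}$ is a graph isomorphism from $Com(L_2)$ to $Com(L_1)$. So part (1) of Theorem~\ref{substr} applies with the roles of $L_1$ and $L_2$ exchanged, and $L_1$ also contains $N_5$ as a sublattice. Theorem~\ref{pentagon} then says $L_1$ is not modular, which is a contradiction. Exchanging $L_1$ and $L_2$ once more gives the converse implication.

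There is no genuine obstacle here, since all the work is already done in Theorem~\ref{substr}(1). The one point to state explicitly is that Theorem~\ref{substr}(1) is stated as an ``if and only if''. Because isomorphism of comparability graphs is a symmetric relation, both directions of that equivalence are available, and the corollary needs them both.
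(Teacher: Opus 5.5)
Your proposal is correct and is exactly the paper's argument: the paper's proof simply combines Theorem~\ref{substr}(1), which says an $N_5$ sublattice transfers across a comparability-graph isomorphism, with the pentagon characterization of modularity in Theorem~\ref{pentagon}. Your remark about symmetry of graph isomorphism is harmless but not strictly needed, since Theorem~\ref{substr}(1) is already stated as a biconditional.
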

	
\begin{proof} 
	This result is  follows from Theorem \ref{substr} and \ref{pentagon}.		
\end{proof}
	
\begin{theorem}[{\cite[p. 58]{Gratzer}}] \label{diamond}
	A lattice $ L $ is distributive if and only if $ L $ does not contain a pentagon ($N_5$) or a diamond ($M_3$).
\end{theorem}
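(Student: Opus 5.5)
By Theorem \ref{pentagon}, the statement reduces to the classical fact that a \emph{modular} lattice is distributive if and only if it has no $M_3$ sublattice. I would prove the two directions separately.

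\emph{Necessity.} Both $N_5$ and $M_3$ fail the distributive law, which I would check by hand on their diagrams. In $N_5$ (labelled as in the earlier figure) we have $b\wedge(a\vee c)=b\wedge 1=b$, whereas $(b\wedge a)\vee(b\wedge c)=a\vee 0=a\neq b$. In $M_3$ we have $a\wedge(b\vee c)=a$, whereas $(a\wedge b)\vee(a\wedge c)=0$. Distributivity is an identity in $\wedge$ and $\vee$, and a sublattice computes $\wedge$ and $\vee$ as restrictions of those of $L$. Hence a distributive lattice cannot contain $N_5$ or $M_3$ as a sublattice.

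\emph{Sufficiency.} Assume $L$ contains neither $N_5$ nor $M_3$. By Theorem \ref{pentagon}, $L$ is modular. Suppose, for a contradiction, that $L$ is not distributive, and pick $a,b,c\in L$ violating distributivity. Put
\[
d=(a\wedge b)\vee(b\wedge c)\vee(c\wedge a),\qquad e=(a\vee b)\wedge(b\vee c)\wedge(c\vee a).
\]
The inequality $d\le e$ holds in any lattice. I would then show that $d=e$ forces the triple $a,b,c$ to satisfy distributivity, so in our case $d<e$. This uses the standard computation: in a modular lattice, $d=e$ implies $a\wedge(b\vee c)=(a\wedge b)\vee(a\wedge c)$. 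Next define
\[
a_1=(a\wedge e)\vee d,\qquad b_1=(b\wedge e)\vee d,\qquad c_1=(c\wedge e)\vee d,
\]
all lying in the interval $[d,e]$. The goal is to prove
\[
a_1\wedge b_1=b_1\wedge c_1=c_1\wedge a_1=d \quad\text{and}\quad a_1\vee b_1=b_1\vee c_1=c_1\vee a_1=e.
\]
Together with $d<e$, this makes $a_1,b_1,c_1$ pairwise distinct and distinct from $d,e$: if, say, $a_1=b_1$, then $d=a_1\wedge b_1=a_1\vee b_1=e$. So $\{d,e,a_1,b_1,c_1\}$ is a sublattice isomorphic to $M_3$, which is the required contradiction.

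\emph{Main obstacle.} The real work is the identities $a_1\wedge b_1=d$ and $a_1\vee b_1=e$. By symmetry of the construction in $a,b,c$, it suffices to treat one pair. For the meet, I would first note $a\wedge e=a\wedge(b\vee c)$, since $a\le a\vee b$ and $a\le a\vee c$. Then I would repeatedly apply the modular law $x\vee(y\wedge z)=(x\vee y)\wedge z$ for $x\le z$, using the absorptions $d\le a_1,b_1\le e$ and $a\wedge b\le d$ to collapse the expression to $(a\wedge b)\vee d=d$. The join identity is the dual computation, using $b\vee d\ge c\wedge b$ and similar absorptions to reach $e$. These bracket manipulations are routine but must be done carefully, since modularity may be invoked only when the comparability hypothesis $x\le z$ has actually been verified.
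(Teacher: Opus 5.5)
Your argument is correct. It is the standard Dedekind--Birkhoff proof: reduce to the modular case via Theorem~\ref{pentagon}, show $d<e$ for a non-distributive triple, and verify by the modular law that $\{d,a_1,b_1,c_1,e\}$ is a diamond, e.g.\ $a_1\wedge b_1=(a\wedge e\wedge(b\vee d))\vee d=((a\wedge b)\vee(a\wedge c))\vee d=d$. The paper gives no proof of its own here, only the citation to Gr\"atzer, and that source uses essentially this construction.
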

	
\begin{corollary}\label{dist}
	Let $L_1$ and $L_2$ be two lattices such that $Com(L_1) \cong Com(L_2)$. Then $L_1$ is distributive if and only if $L_2$ is distributive.
\end{corollary}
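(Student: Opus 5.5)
The plan is to derive this directly from Theorem \ref{substr} together with the forbidden-sublattice characterization in Theorem \ref{diamond}, mirroring the proof of Corollary \ref{modular}. Fix a graph isomorphism $\phi\colon Com(L_1)\to Com(L_2)$. By symmetry (the inverse map $\phi^{-1}$ is also a graph isomorphism), it suffices to prove one direction: if $L_1$ is distributive, then $L_2$ is distributive.

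I will argue by contraposition. Suppose $L_2$ is not distributive. By Theorem \ref{diamond}, $L_2$ contains a sublattice isomorphic to $N_5$ or one isomorphic to $M_3$.
\begin{enumerate}
\item If $L_2$ contains $N_5$, then part (1) of Theorem \ref{substr}, applied to $\phi^{-1}$, gives an $N_5$ sublattice in $L_1$.
\item If $L_2$ contains $M_3$, then part (2) of Theorem \ref{substr} gives an $M_3$ sublattice in $L_1$.
\end{enumerate}
In either case Theorem \ref{diamond} shows that $L_1$ is not distributive, which is what we need.

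There is essentially no obstacle here, since all the work is done in Theorem \ref{substr}. The one point to check is that Theorem \ref{substr} is stated as a biconditional for an arbitrary pair of lattices with isomorphic comparability graphs, so it applies in both directions. One could also shorten the argument with Corollary \ref{modular}: modularity transfers, so only the $M_3$ case remains to be handled by part (2). I will present the direct two-case argument above, because it is cleaner.
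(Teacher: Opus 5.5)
Your proposal is correct and is essentially the paper's own argument: the paper also derives this corollary directly from Theorem \ref{substr} (transfer of $N_5$ and $M_3$ sublattices under a comparability-graph isomorphism), combined with the $M_3$--$N_5$ characterization of distributivity in Theorem \ref{diamond}. Your observation that Theorem \ref{substr} is a biconditional, so it can be applied through $\phi^{-1}$, supplies exactly the symmetry the argument needs, and nothing is missing.
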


\begin{proof}
	This result is  follows from Theorem \ref{substr} and \ref{diamond}.	
\end{proof}

\begin{definition}
	For an integer $n \geq 3$ a \textit{crown} is a poset $\{x_1, y_1, x_2, y_2,\dots, x_n, y_n \}$ in which $x_i \leq y_i$ for $i=1, 2, \dots, n$, $x_{i+1} \leq y_i$ for $ i=1, 2, \dots, n-1$ and $x_1 \leq y_n$ are the only comparability relations.
\end{definition}

	\begin{center}
	\begin{tikzpicture}[scale =1]
		 
		 \draw [fill=black] (1.5,0) circle (.1);  
		 \draw [fill=black] (2,1) circle (.1);
		 \draw [fill=black] (-1,0) circle (.1); 
		 \draw [fill=black] (-2,0) circle (.1); 
		 \draw [fill=black] (-3,0) circle (.1); 
		 \draw [fill=black] (-3,1) circle (.1); 
		 \draw [fill=black] (-2,1) circle (.1); 
		 \draw [fill=black] (1,1) circle (.1);

		 \draw [fill=black] (-0.2,0.3) circle (.02);
		 \draw [fill=black] (0,0.3) circle (.02);
		 \draw [fill=black] (0.2,0.3) circle (.02); 
		 
		 \draw  (0.5,0.5)--(1,1)--(1.5,0)--(2,1)--(-3,0)--(-3,1)--(-2,0)--(-2,1)--(-1,0)--(-0.3,0.3);
		
		\draw node [below] at (-3,0) { $x_1$};
		\draw node [below] at (-2,0) { $x_2$};
		\draw node [below] at (-1,0) { $x_3$};
		\draw node [below] at (1.5,0) { $x_n$};
		
		\draw node [above] at (-3,1) { $y_1$};
		\draw node [above] at (-2,1) { $y_2$};
		\draw node [above] at (1,1) { $y_{n-1}$};
		\draw node [above] at (2,1) { $y_n$};
		
		\draw node [below] at (-0.5,-0.5) {Crown};
	
	\end{tikzpicture}
	
\end{center}

\begin{theorem}\label{crown} 
	Let $L$ be a lattice. Then there is a one-to-one correspondence between crowns in $L$ and induced cycles of length at least 6 in $Com(L)$. 
\end{theorem}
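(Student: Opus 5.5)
The plan is to make the correspondence explicit in both directions. A crown $\{x_1,y_1,\dots,x_n,y_n\}$ in $L$ will be sent to the cyclic sequence $x_1,y_1,x_2,y_2,\dots,x_n,y_n,x_1$ in $Com(L)$. An induced cycle of length at least $6$ will be sent to a crown obtained by orienting its edges.

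First I must check that the crown elements really are vertices of $Com(L)$, i.e.\ different from $0_L$ and $1_L$. This holds because each $x_i$ is incomparable to some $y_j$ (as $n\ge 3$), and each $y_j$ is incomparable to some $x_i$; neither $0_L$ nor $1_L$ is incomparable to anything. By the definition of a crown, the only comparabilities among these $2n$ elements are $x_i<y_i$, $x_{i+1}<y_i$ and $x_1<y_n$. So the induced subgraph of $Com(L)$ on them is exactly the cycle listed above, and it has length $2n\ge 6$. This assignment is clearly injective, since the vertex set together with the cyclic adjacency recovers the crown.

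For the converse, let $C=v_1v_2\cdots v_mv_1$ be an induced cycle in $Com(L)$ with $m\ge 6$ (the length is $m$). The key step is to show that the comparabilities along $C$ must alternate in direction. Suppose $v_{i-1}<v_i<v_{i+1}$ (indices mod $m$), or the reverse. Transitivity then gives $v_{i-1}<v_{i+1}$, a chord of $C$. Since $m\ge 4$, $v_{i-1}$ and $v_{i+1}$ are not consecutive on $C$, so this contradicts inducedness. Hence each $v_i$ is either a local minimum (below both neighbours) or a local maximum (above both neighbours), and the two types alternate. In particular $m$ is even, say $m=2n$ with $n\ge 3$.

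Relabel so that the minima are $x_1,\dots,x_n$ and the maxima are $y_1,\dots,y_n$, in cyclic order $x_1,y_1,x_2,y_2,\dots,x_n,y_n$. The cycle edges then give exactly $x_i<y_i$, $x_{i+1}<y_i$ and $x_1<y_n$. Inducedness says there are no other comparabilities among these elements, so the set is a crown in the sense of the Definition. The two constructions are mutually inverse, which gives the one-to-one correspondence.

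The main obstacle I expect is making "one-to-one" precise rather than any hard argument. The cleanest statement is a bijection between crowns viewed as vertex subsets of $L$ and vertex sets of induced cycles of length at least $6$. I would note that a vertex set determines the cycle, being its induced subgraph, and determines the crown up to the cyclic relabelling built into the Definition. The only genuine subtleties are the exclusion of $0_L,1_L$ and the use of $m\ge 6$ (really $m\ge4$) to rule out chords, and I would state both explicitly.
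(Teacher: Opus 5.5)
Your proposal is correct and follows the paper's proof: crowns give induced cycles by reading off their comparabilities, and conversely transitivity forces the comparabilities along an induced cycle to alternate in direction (a monotone triple $v_{i-1}<v_i<v_{i+1}$ would give a chord), which yields a crown. Your differences are minor improvements: you get evenness of the cycle directly from the alternation, needing only $m\ge 4$ to exclude chords, where the paper imports the absence of odd and $4$-cycles from Lemmas 6.4 and 6.5 of the authors' earlier paper, and you justify why crown elements avoid $0_L$ and $1_L$, which the paper only asserts.
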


\begin{proof}
	Let $L$ be a lattice.
	
	\textbf{Part(1):} Crowns gives induced cycles in $Com(L)$.
	
	Let $\{x_1, y_1, x_2, y_2,\dots, x_n, y_n \}$,  $n \geq 3$ be a crown in $L$. Clearly, $x_i, y_i$ are nonzero, nonunit in $L$. We have $x_i \leq y_i$   $(i=1, 2, \dots, n)$, $x_{i+1} \leq y_i$   $(i=1, 2, \dots, n-1)$ and $x_1 \leq y_n$ are the only comparability relations. let $H$ be a subgraph induced by $\{x_1, y_1, x_2, y_2,\dots, x_n, y_n \}$. Clearly, $x_i \sim y_i$  $  (i=1, 2, \dots, n)$, $x_{i+1} \sim y_i$  $  (i=1, 2, \dots, n-1)$ and $x_1 \sim y_n$ are the only adjacency in $Com(L)$. Therefore we have an induced cycle $x_1 \sim y_1 \sim x_2 \sim y_2 \dots \sim x_{n} \sim y_n \sim x_1$ in $Com(L)$.
	
    \textbf{Part(2):} Induced cycles in $Com(L)$ gives crown in $L$.
    
    By Lemma 6.4 and Lemma 6.5 of \cite{rjvj}, we have $Com(L)$ does not contains induced  cycles $C_k$ ($k \geq 5$) of odd length and induced cycles of length 4.
    
    Let $C = : a_1 \sim b_1 \sim a_2 \sim b_2 \sim \dots \sim b_n \sim a_1$ be an induced cycle in $Com(L)$. Clearly, length of $C$ is even and greater than or equal to 6. Since $C$ is an induced cycle, we have $a_i \sim b_i$ $ (1 \leq i \leq n)$,  $a_i \sim b_{i-1}$ $ (2 \leq i \leq n)$ and $b_n \sim a_1$ are the only adjacency in $Com(L)$. As $a_1 \sim b_1$, we have $a_1 < b_1$ or $b_1 < a_1$. Without loss of generality assume that $a_1 < b_1$. Also $b_1 \sim a_2$ gives either $b_1 < a_2$ or $a_2 < b_1  $. If $b_1 < a_2$, then $a_1 < b_1< a_2$ gives $a_1 \sim a_2$, a contradiction. So that we have $a_2 < b_1  $. Again $a_2 \sim b_2$ gives either $a_2 < b_2$ or $b_2 < a_2$. If $b_2 < a_2$ then $b_2< a_2< b_1$ gives $b_1 \sim b_2$, a contraction and hence $a_2 < b_2$. Continuing in this way, we get $a_3 < b_3, \dots, a_i < b_i, a_{i+1} < b_i, \dots a_n < b_n$. Lastly, $b_n \sim a_1$ gives $b_n < a_1$ or $a_1 < b_n$. If $b_n < a_1$ then $b_n < a_1 < b_1$ gives $b_n \sim b_1$, a contradiction. So that we have $a_1 < b_n$. Hence we can conclude that $a_i < b_i$ $1 \leq i \leq n$, $a_{i+1} < b_i$ $2 \leq i \leq n$ and $a_1 < b_n$ are the only comparability relations in $L$. Therefore we can say that $a_1, b_1, \dots , a_n, b_n $ forms a crown in $L$.
\end{proof}

\begin{theorem}
	 Let $L_1$ and $L_2$ be two lattices such that $Com(L_1) \cong Com(L_2)$. Then $L_1$ contains a crown if and only if $L_2$ also contains it.
\end{theorem}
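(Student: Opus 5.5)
The plan is to reduce the statement to Theorem \ref{crown}, so that the lattice property ``contains a crown'' becomes the graph property ``has an induced cycle of length at least $6$''. Graph isomorphisms preserve the second property. Let $\phi : Com(L_1) \to Com(L_2)$ be a graph isomorphism. Suppose $L_1$ contains a crown $\{x_1, y_1, \dots, x_n, y_n\}$ with $n \geq 3$. By Part (1) of Theorem \ref{crown}, the elements
\[
x_1 \sim y_1 \sim x_2 \sim y_2 \sim \dots \sim x_n \sim y_n \sim x_1
\]
form an induced cycle of length $2n \geq 6$ in $Com(L_1)$. The crown elements are neither $0$ nor $1$, since $0$ and $1$ are comparable to everything, so all of them are vertices of $Com(L_1)$.

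Next I transport this cycle by $\phi$. A graph isomorphism maps induced subgraphs to isomorphic induced subgraphs, because it preserves both adjacency and non-adjacency. Hence
\[
\phi(x_1) \sim \phi(y_1) \sim \dots \sim \phi(y_n) \sim \phi(x_1)
\]
is an induced cycle of length $2n \geq 6$ in $Com(L_2)$. Part (2) of Theorem \ref{crown} then turns this induced cycle into a crown in $L_2$. By the argument given there, after possibly reversing the orientation, $\phi(x_i) < \phi(y_i)$, $\phi(x_{i+1}) < \phi(y_i)$ and $\phi(x_1) < \phi(y_n)$, and these are the only comparabilities. The converse direction follows by applying the same argument to $\phi^{-1}$.

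The only step that needs care is invoking Part (2). It relies on the facts, cited from \cite{rjvj}, that $Com(L)$ has no induced odd cycles of length at least $5$ and no induced $4$-cycles; we only need it for even cycles of length at least $6$, which is what we have. The ``without loss of generality $a_1 < b_1$'' step there is harmless: if instead $b_1 < a_1$, the same argument yields the dual pattern, which is still a crown after relabelling the elements. I expect no real obstacle; the proof is essentially a two-line corollary of Theorem \ref{crown}.
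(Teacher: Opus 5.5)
Your proposal is correct and matches the paper's argument: you turn the crown into an induced cycle of length $2n \geq 6$ via Theorem~\ref{crown}, transport it by $\phi$ (which preserves both adjacency and non-adjacency), and turn it back into a crown in $L_2$ by Part~(2). You are also right that Part~(2) is only needed for even induced cycles of length at least $6$, since the alternation argument works directly for those and so does not depend on the facts quoted from \cite{rjvj}. This is just as well, because the claim excluding induced $4$-cycles is false as stated: in $\mathbf{2}^2 * \mathbf{2}^2$ with $a_1,a_2 < b_1,b_2$, the set $\{a_1,b_1,a_2,b_2\}$ induces a $4$-cycle.
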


\begin{proof}
	Let $\phi$ be a graph isomorphism from $Com(L_1)$ to $Com(L_2)$. Suppose that $L_1$ contains a crown  $\{x_1, y_1, x_2, y_2,\dots, x_n, y_n \}$,  $n \geq 3$ with  $x_i \leq y_i$   $(i=1, 2, \dots, n)$, $x_{i+1} \leq y_i$   $(i=1, 2, \dots, n-1)$ and $x_1 \leq y_n$. By Theorem \ref{crown}, it corresponds to an induced cycle in $Com(L_1)$. Since $Com(L_1) \cong Com(L_2)$, we have $\{\phi(x_1), \phi(y_1), \dots, \phi(x_n), \phi(y_n)\}$ is an induced cycle in $Com(L_2)$. Again by Theorem \ref{crown}, it will form a crown in $L_2$.
\end{proof}

\begin{definition}[{\cite[p. 51]{Gratzer}}]
	A finite lattice $L$ of $n$ elements is dismantlable if and only if there is a chain $ L_1 \subset L_2 \subset \dots \subset L_n = L $ of sublattices satisfying $|L_i| = i$.
\end{definition}
    
\begin{theorem}[{\cite[Theorem 3.2]{rival}}]\label{dismantlable}
	A lattice which contains no infinite chains and no infinite fences is dismantlable if and only if it contains no crowns.
\end{theorem}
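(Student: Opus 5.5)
The plan is to work with the standard reformulation of dismantlability by removal of \emph{doubly irreducible} elements. An element $x\in L$, $x\neq 0_L,1_L$, is doubly irreducible if it is both join- and meet-irreducible, i.e.\ it has exactly one lower cover and exactly one upper cover. The two facts needed are these.
\begin{enumerate}
\item If $L_{i-1}\subset L_i$ are sublattices with $|L_i|=|L_{i-1}|+1$, then the unique element of $L_i\setminus L_{i-1}$ is doubly irreducible in $L_i$.
\item Conversely, removing a doubly irreducible element from a lattice leaves a sublattice.
\end{enumerate}
So $L$ is dismantlable if and only if it can be reduced to a point by successively deleting doubly irreducible elements. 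Under the hypotheses (no infinite chains, no infinite fences) I will treat $L$ as finite, as in the definition of dismantlable. The finiteness reduction needs some care, but it is not the heart of the matter.

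\emph{Dismantlable $\Rightarrow$ no crown.} Suppose $L=L_n$ is dismantlable but contains a crown. Choose $i$ minimal such that $L_i$ contains a crown, and among the crowns in $L_i$ choose one, $C$, of minimal size. By minimality of $i$, $C\not\subseteq L_{i-1}$, so the removed element $x$ lies in $C$. By duality we may assume $x=x_1$, which lies below $y_1$ and $y_n$. Since $x_1$ is meet-irreducible in $L_i$, it has a unique upper cover $u$, and $x_1<u\le y_1\wedge y_n$. Now replace $x_1$ by $u$ in $C$.
\begin{itemize}
\item $u\not\le x_j$, since otherwise $x_1<x_j$.
\item $u\not\geq y_j$ and $u\not\geq x_j$ for $j\neq 1$, since otherwise $y_j\le y_1$ (respectively $x_j\le y_1$) would give an extra comparability in $C$.
\end{itemize}
So the only possible new relations are $u\le y_j$ for some $j\ne 1,n$. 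If no such $j$ occurs, $(C\setminus\{x_1\})\cup\{u\}$ is a crown in $L_i$ avoiding $x$. It lies in $L_{i-1}$ unless $u=x$, which is impossible since $u\neq x_1$, contradicting the minimality of $i$. If some such $j$ occurs, the corresponding induced cycle in $Com(L_i)$ splits into a shorter induced cycle through $u$. By Lemmas 6.4 and 6.5 of \cite{rjvj} (quoted in the proof of Theorem \ref{crown}), this cycle has even length at least $6$: a $4$-cycle $u<y_a,y_b>x_c$ is impossible, since it would force comparabilities through $u\vee x_c$. By Theorem \ref{crown} it yields a smaller crown, contradicting the choice of $C$. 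I will have to check carefully that this shorter cycle really is induced.

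\emph{No crown $\Rightarrow$ dismantlable.} A crown-free lattice stays crown-free after passing to a sublattice. Hence, by fact (2) and induction on $|L|$, it suffices to show that every crown-free lattice with more than two elements has a doubly irreducible element. Suppose none exists. Then every element of $L\setminus\{0_L,1_L\}$ has either two distinct upper covers or two distinct lower covers. Starting from any element, I will build a zigzag $z_0<z_1>z_2<z_3>\cdots$ by always moving to a second cover different from the one just used. I expect this construction, with the bookkeeping that keeps the zigzag a genuine fence, to be the main obstacle. The pieces are:
\begin{itemize}
\item the lattice property, which says that two elements with two common upper bounds have a join below both;
\item the absence of infinite chains, which prevents the zigzag from escaping upward or downward forever;
\item the absence of infinite fences, which forces the zigzag eventually to acquire an extra comparability between non-consecutive terms.
\end{itemize}
The first such closing comparability produces an induced cycle in $Com(L)$. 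Again by Lemmas 6.4 and 6.5 of \cite{rjvj} it has even length at least $6$, so it gives a crown by Theorem \ref{crown}, contradicting crown-freeness. I would first try choosing the covers so that the relevant joins and meets leave $\{0_L,1_L\}$, ensuring all fence elements are vertices of $Com(L)$.
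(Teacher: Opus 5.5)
The paper gives no proof of this statement (it is quoted from Kelly and Rival), so your proposal has to stand on its own. It has a genuine gap: the direction ``no crown $\Rightarrow$ dismantlable'' is not proved. Your reduction to the lemma that every finite crown-free lattice with at least three elements has a doubly irreducible element is correct. But that lemma is the whole content of this direction, and the zigzag rule you describe does not produce a fence. An element that is not doubly irreducible is guaranteed a second cover on only \emph{one} side, and that need not be the side you need. For example, $\mathbf{2}\times\mathbf{2}\times\mathbf{3}$ has no doubly irreducible element besides its bounds. Yet if you start at $(0,0,1)$ and move up to its cover $(0,0,2)$, the only lower cover of $(0,0,2)$ is $(0,0,1)$. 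The only ``second cover'' available is then an upper one, so you get a chain, not a zigzag. Dually, you get stuck whenever you descend to an element whose unique upper cover is the one you came from. You need a global choice principle that forces alternation and keeps the walk inside $L\setminus\{0_L,1_L\}$, and none is given.

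The infinite case is also not handled. For finite $L$ the hypotheses on infinite chains and fences are vacuous, so they do no work in your zigzag. They matter only for infinite lattices, where the paper's definition of dismantlable does not apply at all: the lattice consisting of $0$, $1$ and infinitely many atoms satisfies both hypotheses and contains no crown. So ``treat $L$ as finite'' is not a harmless reduction. It discards exactly the case the hypotheses are there for, and that case needs an extended notion of dismantlability.

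Your appeal to Lemmas 6.4 and 6.5 of \cite{rjvj} to exclude induced $4$-cycles is unsound, because comparability graphs of lattices do contain them. Take the seven-element lattice $0<a,b<m<c,d<1$ with $m=a\vee b$ and $c\parallel d$. The vertices $a,c,b,d$ induce a $4$-cycle in $Com(L)$. The element $m$ lies above $a,b$ and below $c,d$ but creates no comparability among those four, so your justification ``it would force comparabilities through $u\vee x_c$'' fails as well. In your zigzag, a closing cycle of length $3$ or $4$ is not a crown, so the construction itself must exclude it. A zigzag of covers does this: $z_0\prec z_1\succ z_2\prec z_3$ with $z_0<z_3$ gives $z_1=z_0\vee z_2\le z_3$, hence $z_3=z_1$.

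In the direction ``dismantlable $\Rightarrow$ no crown'' this issue is harmless, because the troublesome case never arises. Since $x_1<u$, any relation $u\le y_j$ would give $x_1\le y_j$, which is impossible for $j\ne 1,n$. So $(C\setminus\{x_1\})\cup\{u\}$ is always a crown in $L_{i-1}$, and you need neither the minimal-size choice of $C$ nor any citation. That direction is fine once you also use $u\le y_n$, not $u\le y_1$, to rule out $x_2\le u$ and $y_1\le u$; note that $x_2\le y_1$ is itself a crown relation.
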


\begin{corollary}
	Let $L_1$ and $L_2$ be two lattices which contains no infinite chains and no infinite fences  such that $Com(L_1) \cong Com(L_2)$. Then $L_1$ is dismantlable if and only if $L_2$ is dismantlable.
\end{corollary}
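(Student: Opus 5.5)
The plan is to chain the crown-transfer theorem proved just above with Theorem \ref{dismantlable}. Both $L_1$ and $L_2$ are assumed to contain no infinite chains and no infinite fences, so Rival's criterion applies to each of them. That criterion says $L_i$ is dismantlable if and only if $L_i$ contains no crown. It therefore suffices to show that $L_1$ contains no crown exactly when $L_2$ contains no crown. This is precisely the contrapositive of the preceding theorem: $L_1$ contains a crown if and only if $L_2$ does, whenever $Com(L_1)\cong Com(L_2)$.

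Concretely, suppose $L_1$ is dismantlable. By Theorem \ref{dismantlable}, $L_1$ has no crown. By the preceding theorem, applied through the graph isomorphism $\phi$, $L_2$ has no crown either. Since $L_2$ also has no infinite chains or fences, Theorem \ref{dismantlable} shows that $L_2$ is dismantlable. The converse follows by symmetry, using $\phi^{-1}$.

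There is no serious obstacle, since the argument is a formal combination of two earlier results. The one point to check is that the notion of crown in Theorem \ref{dismantlable} matches the paper's definition. Rival's crowns are induced subposets with $n\ge 3$ and exactly the listed comparabilities, which is the definition used here. Two further remarks apply:

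\begin{itemize}
\item The elements of a crown are never $0$ or $1$, since each is incomparable to some other element. So crowns live entirely in the vertex set of $Com(L)$, which is what Theorem \ref{crown} needs.
\item Dismantlability as defined is a notion for finite lattices. Under the finiteness hypotheses (no infinite chains or fences), Rival's theorem is stated in exactly this setting, so both $L_1$ and $L_2$ meet its hypotheses as given.
\end{itemize}
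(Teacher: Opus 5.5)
Your proposal is correct and is essentially the paper's argument: the paper gives no separate proof, and the corollary follows by combining the preceding crown-preservation theorem with Rival's criterion (Theorem~\ref{dismantlable}), applied to each lattice under the no-infinite-chains/no-infinite-fences hypothesis, exactly as you do.
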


 A graph G is chordal if it contains no induced cycles of length more than 3. A graph $ G $ is distance-hereditary if for any two vertices $ u $ and $ v $ belonging to a connected induced subgraph $ H $ of $ G $, some shortest path connecting $ u $ and $ v $ in G lies in $ H $. Equivalently, distance-hereditary graphs are the graphs in which every induced path is a shortest path. A graph $ G $ is Ptolemaic if it is distance-hereditary and chordal.

\begin{corollary}
	Let $L$ be a finite lattice. Then $Com(L)$ is chordal if and only if $L$ is dismantlable.
\end{corollary}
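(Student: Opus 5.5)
We prove the corollary by linking three equivalent conditions: chordality of $Com(L)$, absence of crowns in $L$, and dismantlability of $L$. Since $L$ is finite, it has no infinite chains and no infinite fences. Theorem \ref{dismantlable} therefore says that $L$ is dismantlable if and only if $L$ contains no crown. It remains to show that $Com(L)$ is chordal if and only if $L$ has no crown.

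Suppose first that $Com(L)$ is chordal, and suppose for contradiction that $L$ contains a crown $\{x_1,y_1,\dots,x_n,y_n\}$ with $n\ge 3$. By Part (1) of Theorem \ref{crown}, the crown yields an induced cycle $x_1\sim y_1\sim\cdots\sim x_n\sim y_n\sim x_1$ in $Com(L)$. This cycle has length $2n\ge 6>3$, contradicting chordality. Hence $L$ has no crown and is dismantlable.

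Conversely, suppose $L$ is dismantlable, so $L$ has no crown, and suppose $Com(L)$ has an induced cycle $C$ of length $k\ge 4$. We rule out each possible length.
\begin{itemize}
\item If $k=4$, this is excluded by Lemma 6.5 of \cite{rjvj}, which the proof of Theorem \ref{crown} already invokes.
\item If $k\ge 5$ is odd, this is excluded by Lemma 6.4 of \cite{rjvj}.
\item If $k\ge 6$ is even, Part (2) of Theorem \ref{crown} shows that the vertices of $C$ form a crown in $L$, a contradiction.
\end{itemize}
Hence every induced cycle of $Com(L)$ has length at most $3$, so $Com(L)$ is chordal.

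The only delicate point is the exclusion of induced $4$-cycles and odd induced cycles, which is exactly what the cited lemmas from \cite{rjvj} supply. A direct check is also straightforward. An odd induced cycle cannot be transitively oriented, since any orientation of it has a vertex where the two incident edges are consistently oriented ($u<v<w$), and transitivity then forces the chord $u\sim w$. For an induced $4$-cycle $a\sim b\sim c\sim d\sim a$, non-adjacency forces the orientation $a,c<b,d$ up to duality, with $a\| c$ and $b\| d$. Then $a\vee c\le b\wedge d$. Since $a<a\vee c$ and $c<a\vee c$ (using $a\|c$), the element $a\vee c$ is neither $0_L$ nor $1_L$, and likewise for $b\wedge d$, so both are vertices of $Com(L)$. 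The proposal identifies this $4$-cycle case as the main obstacle and, rather than deriving a contradiction in the graph, would defer to Lemma 6.5 of \cite{rjvj}, as the paper does.
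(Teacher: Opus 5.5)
\textbf{The step that fails, and the statement.} Your route is the same as the paper's: Theorem \ref{dismantlable}, plus the crown/induced-cycle correspondence, plus the lemmas from \cite{rjvj} to exclude other cycle lengths. Your forward direction (chordal $\Rightarrow$ no crown $\Rightarrow$ dismantlable) is correct, and so is your direct argument excluding odd induced cycles of length $\ge 5$. The gap is the exclusion of induced $4$-cycles. That claim is false, so deferring to Lemma 6.5 of \cite{rjvj} cannot close the gap; the paper's proof rests on the same false claim. In fact the ``if'' direction of the statement fails.

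Take $L=\{0,a,c,e,b,d,1\}$ with $0<a,c<e<b,d<1$, $a\|c$ and $b\|d$. In the paper's notation this is $\mathbf{2}^2*\mathbf{2}^2$.
\begin{enumerate}
\item $L$ is dismantlable: $\{0\}\subset\{0,1\}\subset\{0,e,1\}\subset\{0,e,b,1\}\subset\{0,e,b,d,1\}\subset\{0,c,e,b,d,1\}\subset L$ is a chain of sublattices with sizes $1,\dots,7$. It also cannot contain a crown, since it has only five nonzero nonunit elements.
\item $Com(L)$ is connected, because $e$ is a dominating vertex, so the paper's standing assumption holds.
\item Nevertheless $a\sim b\sim c\sim d\sim a$ is an induced $4$-cycle in $Com(L)$, because $a\|c$ and $b\|d$. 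Hence $Com(L)$ is not chordal.
\end{enumerate}

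\textbf{Why no contradiction is available.} Your own analysis of the $4$-cycle case already shows this. The transitive orientation is forced to be a ``bowtie'' $a,c<b,d$ with $a\|c$ and $b\|d$. The elements $a\vee c\le b\wedge d$ are indeed vertices comparable with all four cycle vertices, but they lie off the cycle, so they contribute no chord. Conversely, every such bowtie in $L$ yields an induced $4$-cycle in $Com(L)$.

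\textbf{What your argument actually proves.} Together with Theorem \ref{crown}, it shows that for finite $L$, $Com(L)$ has no induced cycle of length at least $5$ if and only if $L$ is dismantlable. Equivalently, $Com(L)$ is chordal if and only if $L$ is dismantlable and contains no bowtie $a,c<b,d$ with $a\|c$ and $b\|d$. The statement as written holds only in the direction chordal $\Rightarrow$ dismantlable.
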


\begin{proof}
	By Lemma 6.4 and Lemma 6.5 of \cite{rjvj}, we have $Com(L)$ does not contains induced  cycles $C_k$ of odd length, $k \geq 5$ and induced cycles of length 4. Therefore by Theorem \ref{crown} and Theorem \ref{dismantlable}, result follows.
\end{proof}

\begin{corollary}\label{dist-here}
	Let $L$ be a finite lattice. If $Com(L)$ is distance-hereditary then $L$ is dismantlable lattice.
\end{corollary}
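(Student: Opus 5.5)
The plan is to reduce this to the previous corollary, which says that for a finite lattice $L$, $Com(L)$ is chordal if and only if $L$ is dismantlable. So it suffices to show that a distance-hereditary graph of the form $Com(L)$ is chordal, that is, has no induced cycle of length greater than $3$.

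First, recall what induced cycles $Com(L)$ can have. By Lemma 6.4 and Lemma 6.5 of \cite{rjvj}, as already used in the proof of Theorem \ref{crown}, $Com(L)$ has no induced cycle of length $4$ and no induced odd cycle of length $\geq 5$. So any induced cycle of length greater than $3$ is an even cycle of length $2n\geq 6$.

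Next, show that a distance-hereditary graph has no induced cycle $C$ of length $m\geq 5$. Pick two vertices $u,v$ of $C$ at distance $2$ along $C$, and let $P$ be the longer of the two $u$--$v$ arcs of $C$. Then $P$ has length $m-2\geq 3$, and $P$ is an induced path because $C$ is induced. In a distance-hereditary graph every induced path is a shortest path, so $d_G(u,v)=m-2\geq 3$. But the shorter arc of $C$ gives a path of length $2$, so $d_G(u,v)\leq 2$, which is a contradiction. This excludes every induced cycle of length at least $5$, in particular those of length $2n\geq 6$. Together with the first step, $Com(L)$ has no induced cycles of length more than $3$, so it is chordal.

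Finally, apply the previous corollary: $Com(L)$ chordal gives $L$ dismantlable. Alternatively, the argument can be phrased without chordality. By Theorem \ref{crown}, a crown in $L$ would produce an induced cycle of length at least $6$ in $Com(L)$, which the second step forbids; so $L$ has no crowns. A finite lattice has no infinite chains or fences, so Theorem \ref{dismantlable} then gives dismantlability.

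The main obstacle is only to make sure that "distance-hereditary" is used in its induced-path form, which the paper states as equivalent, and to handle connectivity. The standing assumption that all graphs are connected covers connectivity, and an induced cycle lies within a single component anyway.
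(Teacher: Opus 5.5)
Your proof is correct and is essentially the paper's argument: your ``alternative'' phrasing (a crown gives an induced cycle of length $\geq 6$, and two vertices at distance $2$ along that cycle are joined by a longer induced path that is not shortest) is exactly what the paper does with $x_1\sim y_1\sim\cdots\sim y_{n-1}\sim x_n$ versus $x_1\sim y_n\sim x_n$. Routing it through the chordal corollary instead is equally valid, though establishing chordality genuinely requires the $C_4$-exclusion (Lemma 6.5 of \cite{rjvj}), since $C_4$ is itself distance-hereditary, whereas the direct crown argument does not need it.
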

    
\begin{proof}
	 Let $L$ be a finite lattice. Suppose that $Com(L)$ is distance-hereditary. We have to show that $L$ is a dismantlable lattice. On the contrary suppose that $L$ is not dismantlable. By Theorem \ref{dismantlable}, we can say $L$ contains a crown. Let $\{x_1, y_1, x_2, y_2,\dots, x_n, y_n \}$,  $n \geq 3$ be a crown in $L$ with  $x_i \leq y_i$   $(i=1, 2, \dots, n)$, $x_{i+1} \leq y_i$   $(i=1, 2, \dots, n-1)$ and $x_1 \leq y_n$. By Theorem \ref{crown}, it forms an induced cycle in $Com(L)$. Let $C = : x_1 \sim y_1 \sim x_2 \sim y_2 \sim \dots \sim y_n \sim x_1$ be the induced cycle in $Com(L)$. Consider a path $P_1 =: x_1 \sim y_1 \sim x_2 \sim y_2 \sim \dots \sim y_{n-1} \sim x_n$. Since $C$ is an induced cycle, $P_1$ is an induced path in $Com(L)$. Clearly, length of $P_1$ is at least 4. Now consider another path $P_2 =: x_1 \sim y_n \sim x_n$. Clearly, length of $P_2$ is 2. Thus $P_1$ is not a shortest path. Hence $Com(L)$ is not distance-hereditary which is a contradiction to our assumption. Thus $L$ must be a dismantlable lattice.
\end{proof}

By using Corollary \ref{dist-here}, following corollary is straightforward.

\begin{corollary}
	Let $L$ be a finite lattice. If $Com(L)$ is distance-hereditary then it is chordal and hence ptolemaic.
\end{corollary}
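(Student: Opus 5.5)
The statement has two parts. Chordality will follow directly from Corollary \ref{dist-here} and the preceding corollary on chordal graphs. Ptolemaicity is then immediate from the definition. So the plan is to chain these results rather than argue from scratch.

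\textbf{Step 1 (chordal).} Assume $L$ is a finite lattice and $Com(L)$ is distance-hereditary. By Corollary \ref{dist-here}, $L$ is dismantlable. The preceding corollary says that, for a finite lattice, $Com(L)$ is chordal if and only if $L$ is dismantlable. Applying its ``if'' direction, $Com(L)$ is chordal.

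\textbf{Step 2 (ptolemaic).} A graph is Ptolemaic exactly when it is both distance-hereditary and chordal. $Com(L)$ is distance-hereditary by hypothesis and chordal by Step 1, so it is Ptolemaic.

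It is worth checking which direction of the chordality corollary is actually used, namely ``dismantlable implies chordal''. That direction rests on three facts:
\begin{enumerate}
\item by Lemmas 6.4 and 6.5 of \cite{rjvj}, $Com(L)$ has no induced $C_4$ and no induced odd cycles of length at least $5$;
\item by Theorem \ref{crown}, any induced cycle of length at least $6$ comes from a crown in $L$;
\item by Theorem \ref{dismantlable}, a dismantlable finite lattice has no crowns. A finite lattice has no infinite chains or fences, so the theorem applies.
\end{enumerate}
Together these rule out every induced cycle of length more than $3$.

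The only mild obstacle is this bookkeeping, and the earlier results already cover it. Alternatively, one can bypass dismantlability: any induced cycle of length $2n\ge 6$ comes from a crown, and the argument in the proof of Corollary \ref{dist-here} then contradicts distance-heredity. Both routes give the same conclusion.
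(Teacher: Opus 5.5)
You have reproduced the paper's own argument exactly: Corollary \ref{dist-here}, then the ``dismantlable $\Rightarrow$ chordal'' half of the preceding corollary, then the definition of Ptolemaic. Your bookkeeping also correctly isolates what that half rests on. The problem is that fact (1) is false, and the statement itself fails with it, because a lattice's comparability graph can contain an induced $4$-cycle.

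Take $L=\{0,a,c,e,b,d,1\}$ with $0<a,c<e<b,d<1$, $a\|c$ and $b\|d$. This is $L_1*L_2$ with $L_1=L_2$ the four-element Boolean lattice, and it is distributive and dismantlable. Then $Com(L)$ is the wheel with rim $a\sim b\sim c\sim d\sim a$ and hub $e$. The rim is an induced $C_4$, since $a\|c$ and $b\|d$, so $Com(L)$ is not chordal.

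Yet $Com(L)$ is distance-hereditary. No four of its vertices induce a path: any four containing $e$ give $e$ degree $3$, and the rim induces $C_4$. So every induced path has at most two edges. If its endpoints are nonadjacent, they are opposite rim vertices at distance $2$. Hence every induced path is a shortest path, while the conclusion fails. This is also where the paper's chain breaks: the claim that $Com(L)$ has no induced $C_4$, used in Theorem \ref{crown} and in the chordality corollary, is wrong.

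Your alternative route breaks at the same place. In any graph, distance-heredity alone excludes induced cycles of length at least $5$; that is exactly the argument in the proof of Corollary \ref{dist-here}. So the whole content of ``distance-hereditary $\Rightarrow$ chordal'' for $Com(L)$ is excluding induced $C_4$'s. These are precisely nonzero, nonunit elements $a,c<b,d$ with $a\|c$ and $b\|d$. Neither hypothesis rules such a configuration out: a $4$-cycle, even with a universal vertex added, is distance-hereditary, and dismantlability does not forbid it either.

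What your argument does establish is that $Com(L)$ has no induced cycle of length $\ge 5$. To get chordality, and hence the Ptolemaic property, you need an extra hypothesis forbidding these configurations.
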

	
   Up till now, we showed that some lattice theoretic properties which have forbidden substructure characterization are preserved. Interestingly, in the next few results, we will show that the other properties which do not have forbidden substructure characterization, such as complementation, and atomisticity with dual atomisticity etc., are also preserved. 
	
	\begin{theorem}\label{comp}
		Let $L_1$ and $L_2$ be two lattices such that $Com(L_1) \cong Com(L_2)$. Then $L_1$ is complemented if and only if $L_2$ is complemented.
	\end{theorem}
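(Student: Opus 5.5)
The plan is to show that complementation can be read off $Com(L)$ alone, through a purely graph-theoretic condition, and then to transport that condition along a graph isomorphism $\phi$. Since $0_L$ and $1_L$ are always complements of each other and are not vertices, only the elements of $V(Com(L)) = L\setminus\{0_L,1_L\}$ need attention.

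\textbf{Key lemma.} For $a,b \in L\setminus\{0_L,1_L\}$, $b$ is a complement of $a$ if and only if $a \nsim b$ in $Com(L)$ and $a,b$ have no common neighbour.
\begin{enumerate}
\item For the forward direction, suppose $a\wedge b=0_L$ and $a\vee b=1_L$. Then $a\|b$: if, say, $a\le b$, then $a=a\wedge b=0_L$, which is impossible. So $a\nsim b$.
\item Let $c$ be a common neighbour of $a$ and $b$. The case $a<c<b$ is excluded by $a\|b$, and so is $b<c<a$.
\item If $c<a$ and $c<b$, then $0_L<c\le a\wedge b$, a contradiction. If $c>a$ and $c>b$, then $a\vee b\le c<1_L$, again a contradiction. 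Hence there is no common neighbour.
\item For the converse, suppose $a\nsim b$ and they have no common neighbour. Then $a\|b$. If $a\wedge b\neq 0_L$, then $a\wedge b$ is a vertex, since $a\wedge b\le a<1_L$. Incomparability gives $a\wedge b<a$ and $a\wedge b<b$, so $a\wedge b$ is a common neighbour, which is a contradiction.
\item Dually, if $a\vee b\neq 1_L$, then $a\vee b$ is a common neighbour, again a contradiction. So $b$ is a complement of $a$.
\end{enumerate}

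\textbf{Graph characterization.} By the lemma, $L$ is complemented if and only if every vertex $a$ of $Com(L)$ has some vertex $b$ that is neither adjacent to $a$ nor shares a neighbour with it. For connected graphs this says exactly that every vertex has eccentricity at least $3$. This condition is invariant under graph isomorphism, because $\phi$ preserves adjacency, non-adjacency and common neighbours.

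\textbf{Transport.} Let $L_1$ be complemented and let $x=\phi(a)$ be any vertex of $Com(L_2)$. Choose a complement $b$ of $a$ in $L_1$. Then $\phi(b)$ is non-adjacent to $x$ and has no common neighbour with it. By the lemma applied in $L_2$, $\phi(b)$ is a complement of $x$. Symmetry using $\phi^{-1}$ gives the converse.

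The only delicate points are in the key lemma: checking that $a\wedge b$ and $a\vee b$ really are vertices, meaning they avoid $0_L$ and $1_L$ and are distinct from $a$ and $b$. Degenerate lattices with empty vertex set, $|L|\le 2$, need a separate remark. There both graphs are empty, so both lattices have at most two elements and are trivially complemented.
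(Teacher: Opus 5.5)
Your proof is correct and takes essentially the same approach as the paper. Both arguments rest on two facts: a complementary pair of vertices is incomparable and has no common neighbour, and for an incomparable non-complementary pair the meet or the join is a common neighbour. The paper runs this inline by contradiction and writes out only the meet case; you package it as an iff lemma and transport it.

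One thing to tidy: require $b \neq a$ in the converse of your lemma and in the graph characterization. Otherwise, for the 3-element chain, $b=a$ vacuously qualifies, although that lattice is not complemented. This costs nothing in the transport step, since a complement of a vertex is again a vertex, distinct from it.
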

	
	\begin{proof}
		Let $ \phi $ be a graph isomorphism from $Com(L_1)$ to $Com(L_2)$. Assume that $L_1$ is a complemented lattice. We have to show that $L_2$ is also a complemented lattice. Let $a $ be an element in $L_2$. On the contrary, assume that $a$ does not have a complement in $L_2$. Since $L_1$ is complemented lattice, $\phi^{-1}(a)$ have complement in $L_1$. Let $\phi^{-1}(a')$ be a complement of $\phi^{-1}(a)$, for some $a'$ in $L_2$. Hence we have $\phi^{-1}(a) \wedge \phi^{-1}(a') = 0_{L_1}$ and $\phi^{-1}(a) \vee \phi^{-1}(a') = 1_{L_1}$. This gives $\phi^{-1}(a) \| \phi^{-1}(a')$ in $L_1$, i.e., $a \| a'$ in $L_2$. Since $a$ does not have complement in $L_2$, we have either $a \wedge a' \neq 0_{L_2}$ or $a \vee a' \neq 1_{L_2}$. First assume that $a \wedge a' \neq 0_{L_2}$. Let $a \wedge a' = b$,  for some $ b \in L_2 \setminus \{0_{L_2},1_{L_2}\}$. Since $a > a \wedge a' = b$ and $a' > a \wedge a' = b$, we have $a \sim b $ and $a' \sim b $ in $Com(L_2)$. Hence $\phi^{-1}(a) \sim \phi^{-1}(b) $ and $\phi^{-1}(a') \sim \phi^{-1}(b) $ in $Com(L_1)$.
		
		If $\phi^{-1}(a) > \phi^{-1}(b) > \phi^{-1}(a') $  and $\phi^{-1}(a) < \phi^{-1}(b) < \phi^{-1}(a')$, then $\phi^{-1}(a)$ and $\phi^{-1}(a')$ will be comparable, which is not possible. 	 
		
		If $\phi^{-1}(a) > \phi^{-1}(b)$ and $\phi^{-1}(a') > \phi^{-1}(b)$, then $\phi^{-1}(a) \wedge \phi^{-1}(a') \geq \phi^{-1}(b)$. However, $\phi^{-1}(a) \wedge \phi^{-1}(a') = 0_{L_1}$. This gives  $\phi^{-1}(b) = 0_{L_1}$, a contradiction. 
		
		If $\phi^{-1}(a) < \phi^{-1}(b)$ and $\phi^{-1}(a') < \phi^{-1}(b)$, then $\phi^{-1}(a) \vee \phi^{-1}(a') \leq \phi^{-1}(b)$. However, $\phi^{-1}(a) \vee \phi^{-1}(a') = 1_{L_1}$. This gives  $\phi^{-1}(b) = 1_{L_1}$, a contradiction. 
		
		Thus, $a$ have a complement in $L_2$ and hence  $L_2$ is a complemented lattice.
	\end{proof}
	
	By Corollary \ref{dist} and Theorem \ref{comp}, following corollary is straightforward.
	
	\begin{corollary}\label{boolean} 
		Let $L_1$ and $L_2$ be two lattices such that  $Com(L_1) \cong Com(L_2)$. Then $L_1$ is Boolean if and only if $L_2$ is Boolean.
	\end{corollary}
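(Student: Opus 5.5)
The statement is the conjunction of two properties, so the plan is to combine two results already proved. A Boolean lattice is, by definition, a lattice that is both complemented and distributive. Assume $Com(L_1) \cong Com(L_2)$ and that $L_1$ is Boolean.

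\textbf{Distributivity.} Since $L_1$ is distributive, Corollary \ref{dist} shows that $L_2$ is distributive. Corollary \ref{dist} comes from Theorem \ref{substr} together with the forbidden-sublattice characterization in Theorem \ref{diamond}: a copy of $N_5$ or $M_3$ in $L_2$ would transfer back to $L_1$ along the inverse graph isomorphism.

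\textbf{Complementation.} Since $L_1$ is complemented, Theorem \ref{comp} shows that $L_2$ is complemented.

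Together these give that $L_2$ is Boolean. The converse follows by the same argument applied to the inverse isomorphism $\phi^{-1}\colon Com(L_2)\to Com(L_1)$; alternatively, both cited results are already stated as equivalences, so the symmetry is automatic.

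The combination itself is routine, and the real content sits in the two cited results. In Theorem \ref{comp}, a complementary pair $x,y$ in $L_1$ is incomparable, so its image is incomparable. If $a\wedge a'$ or $a\vee a'$ were a proper nonzero/nonunit element, it would be adjacent to both images. Pulling it back, it would have to lie below both or above both of $x,y$, which forces it to equal $0$ or $1$.

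I expect no step here to be a genuine obstacle. The one point to check is that both properties are stated for bounded lattices, which the paper assumes throughout, so the hypotheses of Corollary \ref{dist} and Theorem \ref{comp} are met.
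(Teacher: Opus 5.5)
Your proposal is correct and takes the same approach as the paper: the paper also gets this corollary directly by combining Corollary \ref{dist} (distributivity) with Theorem \ref{comp} (complementation), since a Boolean lattice is defined as one that is complemented and distributive. Your sketch of why Theorem \ref{comp} holds also matches the paper's proof of that theorem.
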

	
	\begin{theorem}\label{atomistic} 
		Let $L_1$ and $L_2$ be two lattices of finite length such that $Com(L_1) \cong Com(L_2)$. Then $L_1$ is atomistic and dual atomistic if and only if  $L_2$ is atomistic and dual atomistic.
\end{theorem}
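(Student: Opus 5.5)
The plan is to find a purely graph-theoretic description of the lattices that are both atomistic and dual atomistic. Graph isomorphisms preserve such a property automatically, so the theorem would follow at once. The candidate invariant is the absence of \emph{true twins}: for a vertex $v$ of $Com(L)$ write $N[v]=\{v\}\cup\{u : u\sim v\}$, which is $({\downarrow} v\cup{\uparrow} v)\setminus\{0_L,1_L\}$. Call $Com(L)$ twin-free if $N[x]\neq N[y]$ for all distinct vertices $x,y$. A graph isomorphism $\phi$ satisfies $\phi(N[v])=N[\phi(v)]$, so $Com(L_1)$ is twin-free if and only if $Com(L_2)$ is. It therefore suffices to prove that a lattice $L$ of finite length is atomistic and dual atomistic if and only if $Com(L)$ is twin-free.

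\textbf{Atomistic and dual atomistic implies twin-free.} Incomparable vertices are never twins, since $x\in N[x]\setminus N[y]$. So let $x<y$ be vertices. By atomisticity $y$ is a join of atoms, and these cannot all lie below $x$. Hence there is an atom $p\le y$ with $p\not\le x$, and $p\in N[y]$. If $p\in N[x]$, then $p\ge x$; since $p$ is an atom this forces $x=p$, contradicting $p\not\le x$. So $p\notin N[x]$ and $N[x]\ne N[y]$. This direction uses only atomisticity; dual atomisticity gives the same conclusion with a dual atom $q\ge x$, $q\not\ge y$.

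\textbf{Twin-free implies atomistic and dual atomistic.} By duality, and because $Com(L)=Com(L^d)$, it suffices to show that a failure of atomisticity produces twins. Since $L$ has finite length, I would pick a vertex $a$ of minimal height that is not a join of atoms. Let $b$ be the join of the atoms below $a$. Then $0<b<a$: there is some atom below $a$, $a$ is not itself an atom, and $a$ is not a join of atoms. By minimality every nonzero element strictly below $a$ is a join of atoms below $a$, hence is $\le b$. So $({\downarrow}a)\setminus\{0_L\}=(({\downarrow} b)\setminus\{0_L\})\cup\{a\}$, and the down-parts of $N[a]$ and $N[b]$ agree.

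\textbf{The main obstacle} is the up-part. An element $c>b$ with $c\not\ge a$ lies in $N[b]\setminus N[a]$, and then $a\wedge c=b$. My first attempt is to replace the pair $(b,a)$ by a better chosen comparable pair. Among all pairs $x<y$ with $({\downarrow}y)\setminus\{0_L\}=(({\downarrow}x)\setminus\{0_L\})\cup\{y\}$, choose one with $x$ of maximal height; that is, push $b$ upward along such $c$ and replace $a$ by $a\vee c$. I would then show, by induction on length, that such a pair eventually has equal up-sets, giving twins. If that fails, I would use dual atomisticity, which twin-freeness also threatens, and the Duffus–Rival picture, in which the lattice is determined by its atoms and dual atoms. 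The aim would then be a pair $x<y$ such that the atoms below $y$ are exactly those below $x$ and the dual atoms above $x$ are exactly those above $y$; one would then check directly that such a pair is a pair of twins. This last step is the one I expect to need the most care.
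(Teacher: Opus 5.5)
\textbf{The reduction fails at its central claim.} Twin-freeness of $Com(L)$ is \emph{not} equivalent to $L$ being atomistic and dual atomistic. So the ``main obstacle'' is not a technicality that a cleverer choice of pair can remove.

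Your first direction already signals this, because it uses only atomisticity. The claimed equivalence would then force every atomistic lattice of finite length to be dual atomistic. The lattice $L_1$ of Example~\ref{Ex.atomistic} refutes that.

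A smaller instance: let $L=\{0,p_1,p_2,x,1\}$ with $p_1\wedge p_2=0$, $p_1\vee p_2=x$ and $x\Yleft 1$. Then $Com(L)$ is the path $p_1\sim x\sim p_2$. Its closed neighbourhoods $\{p_1,x\}$, $\{p_2,x\}$, $\{p_1,p_2,x\}$ are pairwise distinct. Yet $L$ is neither atomistic ($1$ is not a join of atoms) nor dual atomistic ($p_1$ is not a meet of dual atoms).

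This example shows a second gap: your ``vertex $a$ of minimal height that is not a join of atoms'' need not exist. Atomisticity may fail only at $1_L$, which the paper treats separately as Case (1).

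The trouble is not confined to that case either. In $L_2$ of Example~\ref{Ex.atomistic}, atomisticity fails at the vertex $p_1'$, and your construction gives $b=a_1'$. There $N[p_1']$ is a proper subset of $N[a_1']$, and the graph has no twins at all. So neither your ``push upward'' step nor the Duffus--Rival fallback can produce twins.

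\textbf{The fix.} Replace equality of closed neighbourhoods by one-sided containment. The invariant should be: there exist distinct vertices $u,v$ with $N[u]\subseteq N[v]$. Every edge lies in a maximal clique, i.e.\ a maximal chain minus $0_L,1_L$. So this says exactly that every maximal clique containing $u$ contains $v$. That is the invariant the paper transfers in its Case (2), with $a$ a join-irreducible non-atom and $b$ its unique lower cover.

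With this change your own pieces finish the proof:
\begin{itemize}
\item For vertices $x<y$, the atom $p\le y$ with $p\nleq x$ shows $N[y]$ is not contained in $N[x]$.
\item The dual atom $q\ge x$ with $q\ngeq y$ shows $N[x]$ is not contained in $N[y]$. Both hypotheses are now genuinely used.
\item Conversely, your minimal-height computation already gives $N[a]\subseteq N[b]$, since every element above $a$ is above $b$.
\item If atomisticity fails only at $1_L$, let $s$ be the join of all atoms (it exists since $L$ has finite length). Then $s$ is a vertex comparable to every vertex, so $N[u]\subseteq N[s]$ for every other vertex $u$. The lone exception is $L\cong C_3$, where $Com(L)=K_1$ and the statement is trivial.
\item Dualizing via $Com(L)=Com(L^d)$ handles dual atomisticity.
\end{itemize}
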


\begin{proof}		
	   Let $\phi$ be a graph isomorphism from $Com(L_1)$ to $Com(L_2)$. Suppose that $L_1$ is atomistic and dual atomistic. We have to show that $L_2$ is also an atomistic and dual atomistic lattice. Without loss of generality, assume that $L_2$ is not an atomistic lattice. As $L_1$ is an atomistic lattice, every nonzero element that is not an atom is a join-reducible. Since $L_2$ is a lattice of finite length and not an atomistic lattice, there exists at least a nonzero join-irreducible element, which is not an atom,  say $y$. 
		
		\textbf{Case(1):} Suppose that element is $ y=1_{L_2} $. If $ 1_{L_2} $ is an atom, then nothing to prove. Suppose that $ 1_{L_2} $ is not an atom. Let $\phi(x)$ for some $x \in L_1$, be an element such that $\phi(x) \Yleft 1_{L_2}$. Clearly, $x < 1_{L_1}$. Since $L_1$ is an atomistic lattice, there exists an atom $p$ (say) in $L_1$ such that $p < 1_{L_1}$ and $p \nleq x$, i.e., $p \| x$ in $L_1$. So that $\phi(p) \| \phi(x)$ in $L_2$. Thus, as $\phi(x) \Yleft 1_{L_2}$, we have $\phi(p) \vee \phi(x) = 1_{L_2}$, a contradiction to the fact that $1_{L_2}$ is a join-irreducible.	
		
		\textbf{Case(2)}: Suppose that element $y$ is not $ 1_{L_2} $. Let $y=\phi(a) \neq 1_{L_2}$, for some $a \in L_1$, be such a join-irreducible element in $L_2$. Also, let $\phi(b)$ for some $b \in L_1 $ such that $\phi(b) \Yleft \phi(a)$. 
		
		We claim that every maximal chain containing $0_{L_2}$, $1_{L_2}$ and $\phi(a)$ must contains $\phi(b)$. 
		
		Suppose that there exists a maximal chain containing $0_{L_2}$, $1_{L_2}$ and $\phi(a)$ which do not contains $\phi(b)$. Clearly,  there is an element in that chain $\phi (d) \neq 0_{L_2}$ for some $d$ in $L_1$ such that $\phi (d) \Yleft \phi(a)$ (such a element exist because $L_2$ is of finite length and $\phi(a)$ is not an atom).
		
		As $\phi (b) \Yleft \phi(a)$ and $\phi (d) \Yleft \phi(a)$, we get  $\phi (b) \vee \phi (d) = \phi(a)$, a contradiction to the fact that $\phi(a)$ is a join-irreducible.  Hence, every maximal chain containing $0_{L_2}$, $1_{L_2}$ and $\phi(a)$ must contains $\phi(b)$. Thus, every maximal clique in $Com(L_2)$ containing $\phi(a)$ must contains $\phi(b)$. Since $Com(L_1) \cong Com(L_2)$, every maximal clique in $Com(L_1)$ containing $a$ must contains $b$, i.e., every maximal chain in $L_1$ containing $0_{L_1}$, $1_{L_1}$ and $a$ must contains $b$. 
		
		Now, since $\phi(b) < \phi(a)$ in $L_2$, i.e., $\phi(b) \sim \phi(a)$ in $Com(L_2)$, $b \sim a$ in $Com(L_1)$. If $b < a$, then since $L_1$ is atomistic, there exists an atom $p$ such that $p \nleq b$, i.e., $p \| b$  and $p < a$, that gives a maximal chain in $L_1$  containing $0_{L_1}, 1_{L_1}, a$ and $p$, which do not contains $b$, a contradiction. If $a < b$, then again as $L_1$ is dual atomistic, there exists a dual atom $q$ such that $b \nleq q$, i.e., $b \| q$ and $a < q$, which gives a maximal chain in $L_1$  containing $0_{L_1}, 1_{L_1}, a$ and $q$, which do not contains $b$, a contradiction.
		
		Thus, $L_2$ must be an atomistic lattice. Similarly, we can prove that $L_2$ is a dual atomistic lattice. 
	\end{proof}	
	
\section{Construction of non-isomorphic lattices with isomorphic comparability graphs}
	
	 Now we define an operation on two lattices to construct non-isomorphic lattices with isomorphic comparability graphs.
	 
	 Let $L_1$ and $L_2$ be two lattices and $L$ be the resultant lattice. We denote $L$ as $L_1 * L_2$. In the lattice $L$, $0_{L_2} = 1_{L_1}$ and $x < y$, for all $x \in L_1$, $y \in L_2$ and remaining all other compatibility relations in both $L_1$ and $L_2$ will be as it is. The following example illustrate it.
	
	\begin{center}
		
		\begin{tikzpicture}[scale =1]
			
			\draw [fill=black] (0,-1) circle (.1); \draw [fill=black] (0,1) circle (.1); \draw [fill=black] (0,2) circle (.1);  \draw [fill=black] (1,0) circle (.1);  \draw [fill=black] (-1,0) circle (.1); \draw [fill=black] (1,1) circle (.1); \draw [fill=black] (-1,1) circle (.1);
			
			\draw (0,-1)--(1,0)--(1,1)--(0,2);
			
			\draw (0,-1)--(-1,0)--(-1,1)--(0,2);
			
			\draw	(-1,0)--(0,1)--(0,2);
			
			\draw
			(0,1)--(1,0);
			
			\draw node [below] at (0,-1) { $0_{L_1}$};
			\draw node [above] at (0,2) { $0_{L_1}$};
			\draw node [below] at (0,-1.7) { $L_1$};
			
			\begin{scope}[shift={(4,0)}]
				\draw [fill=black] (1,1) circle (.1);
				\draw [fill=black] (-1,1) circle (.1);
				\draw [fill=black] (0,2) circle (.1);
				\draw [fill=black] (0,0) circle (.1);
				
				\draw (0,0)--(1,1)--(0,2)--(-1,1)--(0,0);
				
				\draw node [below] at (0,0) { $0_{L_2}$};
				\draw node [above] at (0,2) { $1_{L_2}$};
				\draw node [below] at (0,-1.7) { $L_2$};
				
			\end{scope}
			
			\begin{scope}[shift={(8,0)}]
				\draw [fill=black] (0,-1) circle (.1);
				\draw [fill=black] (-1,-0.5) circle (.1);
				\draw [fill=black] (1,-0.5) circle (.1);
				\draw [fill=black] (-1,0.7) circle (.1);
				\draw [fill=black] (0,0.7) circle (.1);
				\draw [fill=black] (1,0.7) circle (.1);
				\draw [fill=black] (0,1.3) circle (.1);
				\draw [fill=black] (-1,1.9) circle (.1);
				\draw [fill=black] (0,2.5) circle (.1);
				\draw [fill=black] (1,1.9) circle (.1);
				
				\draw (0,-1)--(-1,-0.5)--(0,0.7)--(1,-0.5)--(0,-1);
				\draw (-1,-0.5)--(-1,0.7)--(0,1.3)--(-1,1.9)--(0,2.5)--(1,1.9)--(0,1.3)--(1,0.7)--(1,-0.5);
				\draw (0,1.3)--(0,0.7);
				
				\draw node [below] at (0,-1) { $0_{L_2}$};
				\draw node [above] at (0,2.5) { $1_{L_2}$};
				\draw node [below] at (0,-1) { $0_{L_1}$};
				\draw node [right] at (0.2,1.3) { $0_{L_2} = 1_{L_1}$};
				\draw node [below] at (0,-1.7) { $L = L_1 * L_2$};
			\end{scope}
		\end{tikzpicture}
	\end{center}
	
	Clearly, $0_{L_2} = 1_{L_1}$ is a dominating vertex in $Com(L)$. We will denote this vertex by `$v$'. (Here, Dominating vertex we mean, a vertex which is adjacent to remaining all other vertices in the graph). So, we have $Com(L) = Com(L_1) \vee Com(L_2) \vee \{v\}$.
	
	\begin{remark}\label{*operation}
		
		\begin{enumerate}
			\item Let $L^d$ denote the dual lattice of $L$. Then $Com(L) \cong Com({L_1}^d * L_2) \cong Com(L_1 * {L_2}^d ) \cong Com({L_1}^d * {L_2}^d)$
			
			\item This * operation is used  to generate non-isomorphic lattices having isomorphic comparability graphs.
			
			\item Let $G_1$ and $G_2$ be two comparability graphs of lattices $L_1, L_2$ then $G_1 \vee G_2 \vee \{v\}$ is also a comparability graph, where $v$ is a dominating vertex.
		\end{enumerate}
	\end{remark}

	\begin{theorem}
		If $G$ is a comparability graph of a  lattice with exactly one dominating vertex then we can write $G = G_1 \vee G_2 \vee \{v\}$, where $G_1$ and $G_2$ are comparability graphs of lattices and $v$ is a dominating vertex.
	\end{theorem}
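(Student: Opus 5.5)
Let $G = Com(L)$ for a bounded lattice $L$, and let $v$ be the unique dominating vertex of $G$. The plan is to read the decomposition off the lattice itself. Put $L_1 = [0_L, v] = \{x \in L : x \le v\}$ and $L_2 = [v, 1_L] = \{x \in L : x \ge v\}$. Both are intervals of $L$, so both are bounded sublattices, with $1_{L_1} = v = 0_{L_2}$.

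\textbf{Step 1: $L = L_1 * L_2$.} Since $v$ is dominating, every $x \in L \setminus \{0_L, 1_L, v\}$ is comparable to $v$. Hence $L = L_1 \cup L_2$ and $L_1 \cap L_2 = \{v\}$. Moreover, every element of $L_1$ lies below every element of $L_2$, because $x \le v \le y$. The order restricted to each part is just the order of $L$. This is exactly the construction $L_1 * L_2$ described above.

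\textbf{Step 2: the vertex sets.} The vertex set of $G$ splits as
\[
(L_1 \setminus \{0_L, v\}) \;\cup\; (L_2 \setminus \{v, 1_L\}) \;\cup\; \{v\}.
\]
The first two pieces are the vertex sets of $Com(L_1)$ and $Com(L_2)$. Edges inside each piece are exactly the comparabilities of $L_1$, respectively $L_2$. Every vertex of the first piece is adjacent to every vertex of the second, and $v$ is adjacent to everything. So with $G_1 = Com(L_1)$ and $G_2 = Com(L_2)$ we get $G = G_1 \vee G_2 \vee \{v\}$, as in the displayed identity $Com(L) = Com(L_1) \vee Com(L_2) \vee \{v\}$ for the $*$ operation.

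\textbf{Step 3: where uniqueness enters.} The one delicate point is to make sure $G_1$ and $G_2$ are genuine comparability graphs of lattices, i.e.\ that neither $L_1$ nor $L_2$ is degenerate. If $L_1 = \{0_L, v\}$, then $v$ is an atom and $Com(L_1)$ is empty. To prevent this, I would use the hypothesis that the dominating vertex is unique.

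Suppose $L_2$ were just a chain $v \Yleft w \Yleft 1_L$. Then $w$ would be comparable to all of $L_1 \cup L_2$, so $w$ would also be dominating, contradicting uniqueness. The same argument applies to $L_1$ with an element $w \Yleft v$ whenever $L_1$ or $L_2$ is a chain of length at least $2$.

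The cases where $L_1$ or $L_2$ is the two-element chain need a separate treatment. Either one allows an empty graph as $Com(C_2)$, which is consistent with the $*$ construction, or one notes that a two-element interval makes one of $G_1, G_2$ vacuous. I expect this handling of degenerate pieces, and fixing the right convention for them, to be the main (though minor) obstacle. The core of the argument is simply the interval decomposition around $v$.
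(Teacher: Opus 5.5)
Your Steps 1 and 2 are exactly the paper's proof. It sets $A=\{x\in L : x<v\}$ and $B=\{x\in L : x>v\}$, notes that $A\cup\{v\}$ and $B\cup\{v\}$ are sublattices with $1_{L_1}=v=0_{L_2}$, and reads off $G=Com(L_1)\vee Com(L_2)\vee\{v\}$ from the $*$ construction.

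Your Step 3 goes beyond the paper, which never uses the uniqueness of $v$. Uniqueness cannot exclude the two-element case: adjoining a new top to $M_3$ gives exactly one dominating vertex $v$ with $[v,1_L]=\{v,1_L\}$. So the convention that $Com(C_2)$ is the empty graph is genuinely needed, and the paper silently relies on it as well.
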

	
	\begin{proof}
		Let $G$ be a comparability graph of a lattice that has exactly one dominating vertex, say $v$. Suppose that $L$ is a lattice such that $G \cong Com(L)$. Since $v$ is vertex, $v \nin \{0_L, 1_L\}$, i.e., $0_L < v < 1_L$. As $v$ is a dominating vertex, it is comparable with all elements of $L$.
		
		Let $A = \{x \in L / x < v\}$ and $B = \{x \in L / x > v\}$. Clearly, $A \bigcap B = \phi$ . We claim that $A \bigcup \{v\}$ and $B \bigcup \{v\}$ are lattices with respect to the same partial order as in $L$. Clearly, $0_L < v$ gives $0_L \in A$. Also, for all $x \in A$ we have $x < v$. Let $v = 1_A$. So, we have $0_L$ and  $v$ are the least and greatest elements of $A \bigcup \{v\}$ respectively. Suppose $a, b \in A \bigcup \{v\}$. Then $a, b \leq v$ and this implies $a \wedge b \leq v$ and $a \vee b \leq v$, i.e., $a \wedge b \in A 
		\bigcup \{v\}$ and $a \vee b \in A 
		\bigcup \{v\}$. Thus $A \bigcup \{v\}$ is a lattice. In fact it is a cover preserving sublattice of $L$. On the similar lines we can show that $B \bigcup \{v\}$ is a lattice. Let $L_1 = A \bigcup \{v\}$, $L_2 = B \bigcup \{v\}$ and $v = 1_{L_1} = 0_{L_2}$. Then $Com(L_1 * L_2) = G = Com(L_1) \vee Com(L_2) \vee \{v\}$, i.e., $G = G_1 \vee G_2 \vee \{v\}$, where  $G_1 = Com(L_1)$ and $G_2 = Com(L_2)$.
	\end{proof}
	
	Corollary \ref{modular} clearly shows that modularity is preserved under graph isomorphism. However, semi-modularity is not preserved under graph isomorphism. For that, let $L_1 = L_2 = S_7$. By Remark \ref{*operation}, we have $Com(L_1 * L_2) \cong Com({L_1}^d * L_2) \cong Com(L_1 * {L_2}^d ) \cong Com({L_1}^d * {L_2}^d)$. Since $S_7$ is an upper semimodular but not lower semimodular, $L_1 * L_2$ is an upper semimodular but not lower semimodular, $L_1^d * L_2$ is neither upper nor lower semimodular and  $L_1^d * L_2^d$ is lower semimodular but not upper semimodular. On the similar lines, using the * operation, we can have  examples for 0-distributivity, 1-distributivity and other similar properties.
	
	\section{Isomorphism Theorems}
	
	Until now, few lattice theoretical properties have been preserved under the graph isomorphism. But the Isomorphism Problem is still unanswered. The answer is positive in the case of modular lattices of length 3. Unfortunately, the class of modular lattices and moreover the class of distributive lattices do not give affirmative answers when the length of the lattices is greater than 3. The following theorems give us a partial solution to our main Isomorphism Problem for comparability graphs.
	
	\begin{remark}
		If a lattice $L$ is of length 3, then every nonzero nonunit element of $L$ is either an atom or a dual atom, i.e., every vertex in $Com(L)$ is either an atom or dual atom of $L$. Further, in the case of the modular lattice of length 3, every dual atom must contain an atom as the lattice is graded. So any two maximal chains must have the same height. 
	\end{remark}
	
	\begin{lemma}\label{length 3}
		Let $L_1$ and $L_2$ be two modular lattices of length 3 such that $Com(L_1) \cong Com(L_2)$. Then
		
		\begin{enumerate}
			\item If there exists an atom in $L_1$ that maps to an atom in $L_2$, then $At(L_1)$ maps to $At(L_2)$ and $DualAt(L_1)$ maps to $DualAt(L_2)$.
			
			\item If there exists an atom in $L_1$ that maps to a dual atom in $L_2$, then $At(L_1)$ maps to $DualAt(L_2)$ and $DualAt(L_1)$ maps to $At(L_2)$.
		\end{enumerate}
		
	\end{lemma}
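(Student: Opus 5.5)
The plan is to turn the lemma into a statement about bipartite graphs. Let $L$ be a modular lattice of length $3$. By the preceding remark, every vertex of $Com(L)$ is either an atom or a dual atom of $L$. No element is both, since otherwise $L$ would have a maximal chain of length $2$, contradicting gradedness. Two distinct atoms are incomparable, and so are two distinct dual atoms. Hence every edge of $Com(L)$ joins an atom to a dual atom, and $Com(L)$ is bipartite with colour classes $At(L)$ and $DualAt(L)$.

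Next I will show that $Com(L)$ is connected. The paper assumes connectedness throughout, but I would check it directly from modularity, as follows.
\begin{itemize}
\item For distinct atoms $p,q$, the height formula of a modular lattice gives $h(p\vee q)=h(p)+h(q)-h(p\wedge q)=2$. So $p\vee q$ is a dual atom and $p\sim p\vee q\sim q$.
\item Dually, for distinct dual atoms $d,e$, the element $d\wedge e$ is an atom adjacent to both.
\item For an atom $p$ and a dual atom $d$ with $p\nleq d$, choose an atom $q\le d$. Such a $q$ exists because $L$ is graded. Then $p\sim p\vee q\sim q\sim d$.
\end{itemize}
Thus any two vertices are joined by a path.

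The key step is the standard fact that a connected bipartite graph has a unique bipartition. Two vertices lie in the same colour class exactly when some (equivalently, every) path between them has even length. A graph isomorphism $\phi\colon Com(L_1)\to Com(L_2)$ maps paths to paths of the same length, so it preserves this parity relation. Consequently $\phi$ maps each colour class of $Com(L_1)$ onto a colour class of $Com(L_2)$.

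For (1), suppose the atom $a\in L_1$ has $\phi(a)\in At(L_2)$, and let $x\in L_1$ be any vertex. If $x$ is an atom, there is an even path from $a$ to $x$, so there is an even path from $\phi(a)$ to $\phi(x)$. Hence $\phi(x)$ lies in the same class as $\phi(a)$, namely $At(L_2)$. If $x$ is a dual atom, the path is odd and $\phi(x)\in DualAt(L_2)$. Since $\phi$ is a bijection of vertex sets, this gives $\phi(At(L_1))=At(L_2)$ and $\phi(DualAt(L_1))=DualAt(L_2)$. Part (2) follows by the same parity argument with the classes swapped.

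The only step needing care is connectivity together with the fact that the colour classes are exactly the atoms and the dual atoms. This is where modularity, through gradedness and the height formula, is used. Everything after that is pure graph theory.
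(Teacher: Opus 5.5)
Your proof is correct, but it is organized differently from the paper's.

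The paper argues locally, by contradiction. Given an atom $a$ with $\phi(a)$ an atom and another atom $b$ with $\phi(b)$ a dual atom, it takes an atom $\phi(c)<\phi(b)$. It rules out $\phi(a)\vee\phi(c)\neq 1_{L_2}$ by pulling the path $\phi(a)\sim\phi(a)\vee\phi(c)\sim\phi(c)\sim\phi(b)$ back to $Com(L_1)$: two atoms $a,b$ of $L_1$ cannot be joined by a path of length $3$. It then disposes of the remaining case $\phi(a)\vee\phi(c)=1_{L_2}$ by exhibiting an $N_5$ on $\{0_{L_2},1_{L_2},\phi(a),\phi(b),\phi(c)\}$. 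The claim about dual atoms is left to ``similar lines''.

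So the paper is really running your parity argument along one specific odd path. Its $N_5$ step plays the role of your computation $h(p\vee q)=2$; in a modular lattice of length $3$ the case $\phi(a)\vee\phi(c)=1_{L_2}$ simply cannot occur. Your formulation makes the mechanism explicit: $Com(L)$ is a connected bipartite graph with bipartition exactly $(At(L),DualAt(L))$, and such a bipartition is unique. As a result, atoms and dual atoms, and parts (1) and (2), are handled at once.

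Your approach also isolates where modularity is needed: only for connectedness. An element that is both an atom and a dual atom is an isolated vertex, so connectedness alone already excludes it, and gradedness is not needed for that step. Consequently, under the paper's standing assumption that all graphs are connected, your argument proves the lemma, and hence Theorem~\ref{7}, for arbitrary lattices of length $3$. The paper's local argument, by contrast, relies on modularity, via the $N_5$ exclusion, to dispose of the case $\phi(a)\vee\phi(c)=1_{L_2}$.
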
  
	
	\begin{proof}
		Let $\phi$ be a graph isomorphism from $ Com(L_1) $ to $ Com(L_2) $.
		
		\textbf{Proof of (1):} Suppose $a$ and $\phi (a)$ are atoms. We have to show that $At(L_1)$ maps to $At(L_2)$. Let $b$ be an atom of $L_1$ such that $a\neq b$. On the contrary, suppose that $\phi (b)$ is not an atom in $L_2$. Since $l(L_2)\equal 3$, $\phi(b)$ is a dual atom and as $a\nsim b$, $\phi(a) \nsim \phi(b)$, i.e., $\phi(a)$ and $\phi(b)$ are incomparable. This gives $\phi(a)\wedge \phi(b) \equal 0_{L_2}$ and $\phi(a)\vee \phi(b) \equal 1_{L_2}$. Since $L$ is modular and $\phi(b)$ is a dual atom, there exists an atom $\phi(c)$ such that $\phi(c) < \phi(b)$. Clearly, $\phi(c)\neq \phi(a)$ and $\phi(a)\wedge \phi(c) \equal 0_{L_2}$. 
		
		Suppose that $\phi(a)\vee \phi(c) \neq 1_{L_2}$. Let $\phi(a)\vee \phi(c) \equal \phi(d)$ for some $d\in L_1\setminus\{0_{L_1},1_{L_1}\}$. Clearly, $\phi(d)$ is a dual atom. Since $\phi(a)\sim \phi(d)$, $a\sim d$ and as $a$ is an atom, $d$ has to be a dual atom. Similarly, $\phi(c)\sim \phi(b)$ gives $c\sim b$ and as $b$ is an atom, $c$ has to be a dual atom. However, $\phi(c)\sim \phi(d)$ gives $c\sim d$, a contradiction to the fact that both are distinct dual atoms. Thus, we must have $\phi(a)\vee \phi(c) \equal 1_{L_2}$. 
		
		From the above discussion, $\{0_{L_2}, 1_{L_2}, \phi(a), \phi(b), \phi(c)\}$ forms a sublattice isomorphic to $N_5$, a contradiction. Hence $\phi(b)$ is an atom of $L_2$. This shows that $At(L_1)$ maps to $At(L_2)$. On the similar lines, we have $DualAt(L_1)$ maps to $DualAt(L_2)$.   
		
		\textbf{Proof of (2):} Suppose $a$ is an atom and $\phi (a)$ is a dual atom. We have to show that $At(L_1)$ maps to $DualAt(L_2)$. Let $b$ be an atom such that $a\neq b$. On the contrary, suppose $\phi (b)$ is not a dual atom. Since $l(L_2)\equal 3$, $\phi(b)$ is an atom and as $a\nsim b$, $\phi(a) \nsim \phi(b)$. This gives $\phi(a)\wedge \phi(b) \equal 0_{L_2}$ and $\phi(a)\vee \phi(b) \equal 1_{L_2}$. Since $\phi (b)$ is an atom, there exists a dual atom $\phi(c)$ for some $c \in L_1 \setminus \{0_{L_1},1_{L_1}\}$, such that $\phi(b) < \phi(c)$. As $\phi(a)$ and $\phi(c)$ are dual atoms, we have $\phi(a)\vee \phi(c)\equal 1_{L_2}$.
		
		Suppose $\phi(a)\wedge \phi(c)\neq 0_{L_2} $. Let $\phi(a)\wedge \phi(c)\equal \phi(d)$ for some $d\in L_1 \setminus \{0_{L_1},1_{L_1}\}$. Clearly, $\phi(d)$ is an atom because $\phi(d) < \phi(a)$ and $\phi(a)$ is a dual atom. Since $\phi(a)\sim \phi(d)$, $a\sim d$ and as $a$ is an atom, $d$ has to be a dual atom. Similarly, $\phi(c)\sim \phi(b)$ gives $c\sim b$ and as $b$ is an atom, $c$ has to be a dual atom. However, $\phi(c)\sim \phi(d)$ gives $c\sim d$, a contradiction to the fact that both are dual atoms. Thus, $\phi(a) \wedge \phi(c) \equal 0_{L_2}$.
		
		From the above discussion,  $ \{0_{L_2}, 1_{L_2}, \phi(a), \phi(b), \phi(c)\} $ forms a sublattice  isomorphic to $N_5$, a contradiction to the fact that $L_2$ is modular. Thus, $\phi(b)$ is a dual atom. This shows that $At(L_1)$ maps to $Dual At(L_2)$. On the similar lines, we have $DualAt(L_1)$ maps to $At(L_2)$.    
	\end{proof}

	\begin{theorem}\label{7}
		Let $L_1$ and $L_2$ be two modular lattices of length 3 such that $Com(L_1) \cong Com(L_2)$. Then either $L_1  \cong L_2$ or $L_1 \cong L_2^d$.
	\end{theorem}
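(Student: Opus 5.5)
Let $\phi$ be a graph isomorphism from $Com(L_1)$ to $Com(L_2)$. The plan is to extend $\phi$ to a bijection $\psi\colon L_1\to L_2$ and then check that $\psi$ is either order preserving or order reversing in both directions. Since $\ell(L_1)=3$ and $L_1$ is modular, hence graded, the remark preceding Lemma~\ref{length 3} shows that $L_1$ has at least one atom $a$. Moreover, every vertex of $Com(L_i)$ is either an atom or a dual atom. By Lemma~\ref{length 3} there are exactly two cases.

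\textbf{Case (1):} $\phi(a)$ is an atom. Then $\phi(At(L_1))=At(L_2)$ and $\phi(DualAt(L_1))=DualAt(L_2)$. Define $\psi(0_{L_1})=0_{L_2}$, $\psi(1_{L_1})=1_{L_2}$ and $\psi(x)=\phi(x)$ otherwise; this is a bijection. We verify $x\le y \iff \psi(x)\le\psi(y)$ by type.
\begin{enumerate}
\item If one of $x,y$ is $0$ or $1$, the claim is immediate.
\item If $x,y$ are both atoms, or both dual atoms, they are comparable only when $x=y$. Since $\psi$ preserves type, the same holds for the images.
\item If $x$ is an atom and $y$ a dual atom, then $y\le x$ is impossible. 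So $x\le y$ holds iff $x\sim y$ in $Com(L_1)$, iff $\phi(x)\sim\phi(y)$. Since $\phi(x)$ is an atom and $\phi(y)$ a dual atom, this in turn holds iff $\phi(x)\le\phi(y)$.
\end{enumerate}
Hence $\psi$ is bi-order preserving, and so $L_1\cong L_2$.

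\textbf{Case (2):} $\phi(a)$ is a dual atom. Then Lemma~\ref{length 3}(2) gives $\phi(At(L_1))=DualAt(L_2)$ and $\phi(DualAt(L_1))=At(L_2)$. Define $\psi(0_{L_1})=1_{L_2}$, $\psi(1_{L_1})=0_{L_2}$ and $\psi=\phi$ elsewhere. The same case analysis shows that $x\le y$ iff $\psi(y)\le\psi(x)$. For an atom $x$ and a dual atom $y$ the argument runs as follows: $x\le y$ holds iff $x\sim y$, iff $\phi(x)\sim\phi(y)$, and since $\phi(y)$ is now an atom and $\phi(x)$ a dual atom, this holds iff $\phi(y)\le\phi(x)$. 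Thus $\psi$ is an isomorphism $L_1\to L_2^d$.

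The only real content is in Lemma~\ref{length 3}, which already rules out a mixed assignment of atoms. The remaining point to check is that \emph{all} vertices of $L_1$, atoms and dual atoms alike, are sent consistently by $\phi$. This is exactly what both parts of the lemma assert. They apply because a modular lattice of length~3 has at least one atom, so one of the two cases must occur. Degenerate situations, such as the absence of dual atoms, cannot arise since the length is exactly~3.
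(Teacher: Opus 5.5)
Your proof is correct and follows the paper's argument. You extend $\phi$ by fixing the bounds, use Lemma~\ref{length 3} to see that atoms and dual atoms are sent consistently, and check bi-order preservation type by type; the only difference is that in Case (2) you write down the order-reversing extension directly, whereas the paper applies Case (1) to $L_1$ and $L_2^d$ (using $Com(L_2)\cong Com(L_2^d)$ and that $L_2^d$ is again modular of length 3), which amounts to the same thing.
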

	
	\begin{proof}	
		Let $\phi$ be a graph isomorphism from $ Com(L_1) $ to $ Com(L_2) $. Since each lattice is of length 3, every nonzero nonunit element is either an atom or a dual atom.
		
		\textbf{Case (1):} \textbf{Suppose that there exists an atom $c \in L_1$ such that $\phi(c)$ is an atom.} By Lemma \ref{length 3}, $At(L_1)$ maps to  $At(L_2)$ and $DualAt(L_1)$ maps to  $DualAt(L_2)$. Now, we define $\phi_e$ from $L_1$ to $L_2$ such that $\phi_e (0_{L_1})\equal 0_{L_2}$, $\phi_e (1_{L_1})\equal 1_{L_2}$ and $\phi_e (a)\equal \phi(a)$, for $a\notin \{0_{L_1},1_{L_1}\}$. It is clear that $\phi_e$ is a bijective map. We have to show that $\phi_e$ is a lattice isomorphism. For that, we will show that $\phi_e$ is a bi-order preserving map.
		
		Suppose $a < b$ for some $a, b \in L_1$. If $a\equal 0_{L_1}$, then $\phi_e(a)\equal \phi_e(0_{L_1}) = 0_{L_2}$ and hence $\phi_e(a)< \phi_e(b)$. If $b\equal 1_{L_1}$, then $\phi_e(b)\equal \phi_e(1_{L_1}) =  1_{L_2}$ and hence $\phi_e(a)< \phi_e(b)$. Now, let $a,b \nin \{0_{L_1}, 1_{L_1}\}$. Since $\ell(L) = 3$, $a$ is an atom and $b$ is a dual atom. By Lemma \ref{length 3}, $\phi_e(a)$ is an atom and $\phi_e(b)$ is a dual atom. As $a\sim b$, $\phi_e(a) \sim \phi_e(b)$. This gives $\phi_e(a) < \phi_e(b)$.
		
		Now, suppose $\phi_e(x) < \phi_e(y)$ for some  $x, y \in L_1$. If $\phi_e(x) = 0_{L_2} = \phi_e(0_{L_1})$, then $x=0_{L_1}$, as $\phi_e$ is bijective. This gives $x<y$. If $\phi_e(y)=1_{L_2}$, then $y=1_{L_1}$ gives $x<y$. Let $\phi_e(x), \phi_e(y) \nin \{0_{L_2},1_{L_2}\}$. Since $\ell(L_2)=3$, $\phi_e (x)$ is an atom and $\phi_e (y)$ is a dual atom. By Lemma \ref{length 3}, $x$ is an atom and $y$ is a dual atom. Since $\phi_e(x) \sim \phi_e(y)$, $x \sim y$. This gives $x < y$.
		
		Thus, $\phi_e$ is a bijective and bi-order preserving map. Hence $\phi_e$ is an isomorphism and $L_1\cong L_2$.

		\textbf{Case (2):} \textbf{Suppose that there exists an atom $c \in L_1$ such that it maps to a dual atom of $L_2$, i.e., $ c $ maps to an atom of $L_2^d$.} We know that modularity is a dual property for lattices. Since $L_2$ is a modular lattice, $L_2^d$ is also a modular lattice. Also, $Com(L_2)\cong Com({L_2}^d)$. This gives $Com(L_1)\cong Com({L_2}^d)$. Thus, we have $L_1$ and $L_2^d$ such that both are modular lattices of length 3 and $Com(L_1)\cong Com({L_2}^d)$. Since there exists an atom $c \in L_1$ that maps to an atom of $L_2^d$, we get the result by Case (1).	\end{proof}

	\begin{remark}
		Let $L$ be a lattice of length 3. Then $L$ is modular if and only if it is 0-modular. Hence in Lemma \ref{length 3} and Theorem \ref{7}, we can replace modularity by 0-modularity.
	\end{remark}

	As we discussed earlier, in the class of modular and distributive lattices, we do not get a lattice isomorphism from a graph isomorphism in the general case. Here is an example to justify it.
	
	\begin{example}\label{modularexample}
		In the following figure,  $L_1$ and $L_2$ are modular as well as distributive lattices with $Com(L_1) \cong Com(L_2)$, but neither $L_1 \ncong L_2$ nor $L_1 \ncong L_2^d$.   \end{example}
	
	\begin{center}
		\begin{tikzpicture}[scale =.5]
			
			\draw [fill=black] (0,0) circle (.1);
			\draw [fill=black] (1,1) circle (.1); 
			\draw [fill=black] (-1,1) circle (.1);  
			\draw [fill=black] (0,2) circle (.1);
			\draw [fill=black] (0,3) circle (.1);
			\draw [fill=black] (0,4) circle (.1);
			
			\draw (0,0)--(1,1)--(0,2)--(0,3)--(0,4);
			
			\draw (0,0)--(-1,1)--(0,2);
			
			\draw node [below] at (0,-0.8) { $L_1$};
			
			\draw node [below] at (0,-0.1) { $0$};
			
			\draw node [left] at (-1.1,1) { $b$};
			
			\draw node [right] at (1.1,1) { $c$};
			
			\draw node [left] at (-0.1,2) { $a$};
			
			\draw node [left] at (-0.1,3) { $d$};
			
			\draw node [above] at (0,4.1) { $1$};

			\begin{scope}[shift={(5,0)}]
				
				\draw [fill=black] (0,0) circle (.1);
				\draw [fill=black] (0,1) circle (.1); 
				\draw [fill=black] (-1,2) circle (.1);  
				\draw [fill=black] (1,2) circle (.1);
				\draw [fill=black] (0,3) circle (.1);
				\draw [fill=black] (0,4) circle (.1);
				
				\draw (0,0)--(0,1)--(-1,2)--(0,3)--(0,4);
				
				\draw (0,1)--(1,2)--(0,3);
				
				\draw node [below] at (0,-0.8) { $L_2$};
				
				\draw node [below] at (0,-0.1) { $0$};
				
				\draw node [left] at (-1.1,2) { $b_1$};
				
				\draw node [right] at (1.1,2) { $c_1$};
				
				\draw node [left] at (-0.1,1) { $a_1$};
				
				\draw node [left] at (-0.1,3) { $d_1$};
				
				\draw node [above] at (0,4.1) { $1$};
				
				
			\end{scope}
			
			\begin{scope}[shift={(12,0)}]
				
				\draw [fill=black] (-1,0) circle (.1);
				\draw [fill=black] (1,0) circle (.1);
				\draw [fill=black] (1,2) circle (.1);
				\draw [fill=black] (-1,2) circle (.1);
				
				\draw node [right] at (1,2) { $b$};
				\draw node [left] at (-1,0) { $d$};
				\draw node [left] at (-1,2) { $a$};
				\draw node [right] at (1,0) { $c$};
				
				\draw (1,2)--(-1,2)--(-1,0)--(1,2);
				\draw (-1,0)--(1,0)--(-1,2);
				
				\draw node [below] at (0,-0.8) { $Com(L_1) \cong Com(L_2)$};
				
			\end{scope}
			
		\end{tikzpicture}
		
	\end{center}
	
	\begin{remark}
		From Example \ref{modularexample}, it is clear that for modular lattices of length greater than 3, the comparability graph isomorphism does not give the lattice isomorphism. But in the case of covering graphs, Theorem \ref{jak1} and Theorem \ref{jak2} say that the covering graph isomorphisms are given by certain direct-product decomposition.
	\end{remark}
	
	In an atomistic and dual atomistic lattice, every nonzero element can be written as a join of some atoms, and every nonunit element can be written as a meet of some dual atoms. Also, there are some algebraic structures like vector spaces whose  lattice of substructures is atomistic and dual atomistic. 
	
	As we discussed earlier,  the atomisticity and dual atomisticity of a lattice is preserved under the graph isomorphism. Surprisingly, it gives a huge class in which graph isomorphism extends to lattice isomorphism. Let's discuss this interesting fact step by step.

	\begin{proposition}	
		In an atomistic or dual atomistic lattice, $0_L$ and $1_L$ are the only two elements comparable with all rest of the elements in $L$.	
	\end{proposition}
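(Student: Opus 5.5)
The plan is to argue directly from the definition of atomisticity and then obtain the dual atomistic case by duality. Clearly $0_L$ and $1_L$ are comparable with every element, so the whole content of the statement is the converse. Fix an element $x\in L$ with $0_L<x<1_L$. It then suffices to exhibit an element of $L$ that is incomparable with $x$.

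Suppose first that $L$ is atomistic. Since $1_L\neq 0_L$, the definition gives $1_L=\bigvee S$ for some set $S$ of atoms. I claim some atom $p\in S$ satisfies $p\nleq x$. Indeed, if every atom of $S$ were below $x$, then $x$ would be an upper bound of $S$, so $1_L=\bigvee S\leq x$, contradicting $x<1_L$.

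Now fix such an atom $p$ with $p\nleq x$ and check that it is incomparable with $x$. If $x<p$, then $0_L<x<p$ contradicts $p$ being an atom, because $x\neq 0_L$. Also $p\leq x$ is excluded by the choice of $p$. Hence $p\parallel x$, so $x$ is not comparable with all other elements of $L$.

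If instead $L$ is dual atomistic, the same argument applies verbatim in $L^d$. Write $0_L=\bigwedge T$ with $T$ a set of dual atoms. Some $q\in T$ must satisfy $x\nleq q$, since otherwise $x\leq \bigwedge T=0_L$. Then $q<x$ is impossible because $q$ is a dual atom and $x\neq 1_L$, so $q\parallel x$.

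There is no genuine obstacle here. The only point needing care is that the argument uses the representation of $1_L$ (respectively $0_L$) as a join of atoms (respectively a meet of dual atoms). This is exactly what excludes degenerate examples such as the three-element chain $C_3$: there $1_L$ is not a join of atoms, so $C_3$ is not atomistic and gives no counterexample. No finiteness of length is needed.
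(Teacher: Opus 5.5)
Your proof is correct and follows essentially the same route as the paper: choose an atom $p\nleq x$, observe that $x<p$ would contradict $p$ being an atom, and handle the dual atomistic case by duality. Your derivation of such a $p$ from $1_L=\bigvee S$ makes explicit a step the paper simply asserts.
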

	
	\begin{proof}		
		Let $L$ be an atomistic lattice. On the contrary, suppose that there exists an element $a \nin \{0_L,1_L\}$ such that it is comparable with all other elements. Clearly, $a < 1_L$ and since $L$ is atomistic, there exists an atom  $p$ such that $p \nleq a$ and $p < 1_L$. By the assumption, $a$ is comparable with $p$ so that we must have $a < p$, a contradiction to the fact that $p$ is an atom, as $a \nin \{0_L, 1_L\}$. Thus, $0_L$ and $1_L$ are the only two elements comparable with all rest of the elements in $L$.
		
		The equivalent argument can be made when $L$ is dual atomistic.
	\end{proof}	
	
	Following lemmas describe the crucial role of atoms and dual atoms to get the lattice isomorphism from graph isomorphism.

	\begin{lemma} \label{atom}
		Let $L_1 $ and  $L_2$ be two atomistic and dual atomistic lattices and $Com(L_1) \cong Com(L_2)$. If $a \in L_1$ is an atom, then $\phi(a)$ is either an atom or a dual atom of $L_2$.
	\end{lemma}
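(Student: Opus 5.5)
The plan is to find a purely graph-theoretic property that separates the atoms and dual atoms from the remaining vertices, and then to transport it along the isomorphism $\phi$. Write $N(v)$ for the neighbourhood of $v$ in $Com(L)$ and $\overline{G[N(v)]}$ for the complement of the subgraph induced on $N(v)$. The candidate property is: \emph{$\overline{G[N(v)]}$ is disconnected}, i.e.\ $N(v)$ splits into two nonempty parts with every vertex of one part comparable to every vertex of the other. I will show that, in an atomistic and dual atomistic lattice of finite length, a vertex has this property exactly when it is neither an atom nor a dual atom. Since $\phi$ restricts to an isomorphism $G[N(a)]\cong G[N(\phi(a))]$, and hence of their complements, the property is preserved, and the lemma follows.

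\textbf{Non-extremal vertices.} Let $x\in L_2$ be neither an atom nor a dual atom. By finite length there is an atom $p<x$ and a dual atom $q>x$. Hence $D=\{y: 0<y<x\}$ and $U=\{y: x<y<1\}$ are both nonempty, and $N(x)=D\cup U$. Transitivity gives $d<x<u$, so every element of $D$ is adjacent to every element of $U$. Thus $\overline{G[N(x)]}$ has no edges between $D$ and $U$ and is disconnected.

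\textbf{Atoms.} Let $a$ be an atom of $L_1$. Then $N(a)=\{y: a<y<1\}$, and I must show that $\overline{G[N(a)]}$ is connected. This is the main obstacle. Suppose $N(a)=A\cup B$ with $A,B$ nonempty and every element of $A$ comparable to every element of $B$. The steps are as follows.
\begin{enumerate}
\item Dual atomisticity. Each $y\in N(a)$ is the meet of the dual atoms above it, and all of these lie above $a$. Since distinct dual atoms are incomparable, all dual atoms above $a$ must lie in the same part, say $A$. Here I must check that there are at least two of them, or else handle a unique dual atom separately; in that case the Proposition forces a contradiction with dual atomisticity.
\item Then each $b\in B$ is comparable to every dual atom above $a$, so $b$ lies below all of them. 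Hence $b\le m:=\bigwedge\{q\text{ dual atom}: q>a\}$, and dual atomisticity gives $m=a$ whenever $a$ is itself a meet of dual atoms above it. This forces $b\le a$, which is impossible for $b\in N(a)$.
\end{enumerate}
The delicate points are the degenerate situations, namely when $a$ is simultaneously a dual atom or when some $y\in N(a)$ is a dual atom. I would treat these by splitting into cases and using the preceding Proposition, which says that no nonextremal element is comparable to everything. The same conclusion, that $\overline{G[N(a)]}$ is connected, holds dually for dual atoms.

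\textbf{Conclusion.} For an atom $a$, the graph $\overline{G[N(a)]}$ is connected, so $\overline{G[N(\phi(a))]}$ is connected. By the non-extremal case, $\phi(a)$ must therefore be an atom or a dual atom of $L_2$. If the step-1 argument turns out to be insufficient, the fallback I would try first is the maximal-clique argument from the proof of Theorem \ref{atomistic}. It uses the same atoms $p<x<q$ and the fact that $a$ is the minimum of every maximal chain through it to reach a contradiction.
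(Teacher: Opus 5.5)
Your argument is correct, and it takes a genuinely different route from the paper's.

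The paper argues by direct contradiction inside a configuration of elements. Assuming $\phi(a)$ is neither an atom nor a dual atom, it uses atomisticity and dual atomisticity of $L_2$ to get two atoms $\phi(p),\phi(p_1)$ below $\phi(a)$ and two dual atoms $\phi(q),\phi(q_1)$ above it. Their preimages all lie above the atom $a$, with $q,q_1$ on the same side of both $p$ and $p_1$. In each of the two resulting cases the paper produces one more witness, either an atom $\phi(p_2)<\phi(q)$ with $\phi(p_2)\nleq\phi(a)$ or dually a dual atom $\phi(q_2)$, whose preimage cannot be placed.

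You instead isolate an isomorphism invariant. A vertex with elements strictly below and strictly above it has a neighbourhood that splits as a join $D\vee U$. Your step~2 shows that the neighbourhood of an atom $a$ that is not a dual atom admits no such splitting: the part avoiding the dual atoms above $a$ would consist of common lower bounds of a family of dual atoms whose meet is $a$, so its elements would be $\le a$.

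What your route buys:
\begin{itemize}
\item It characterizes $At(L)\cup DualAt(L)$, in an atomistic and dual atomistic lattice, as exactly the vertices whose neighbourhood is not a join of two nonempty parts.
\item It yields Lemma~\ref{dualatom} at the same time.
\item It uses only dual atomisticity of $L_1$ at $a$, plus the existence of an atom below and a dual atom above each vertex of $L_2$.
\end{itemize}
The paper's proof instead places all its hypotheses on $L_2$ and uses nothing about $L_1$ beyond $a$ being an atom.

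Your hedges are unnecessary:
\begin{itemize}
\item Step~2 works verbatim when there is a single dual atom $q$ above $a$, since it would force $a=q$. So neither the Proposition nor the maximal-clique fallback is needed.
\item An atom that is also a dual atom has empty neighbourhood, which fails your property automatically.
\item Finite length is not a hypothesis of the lemma and can be dropped. Atomicity and dual atomisticity of $L_2$ already supply the atom $p<x$ and the dual atom $q>x$.
\end{itemize}
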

	
	\begin{proof}		
		Let $\phi$ be a graph isomorphism from $ Com(L_1) $ to $ Com(L_2) $. Suppose $a \in L_1$ is an atom. We have to show that $\phi(a)$ is either an atom or a dual atom of $L_2$. On the contrary, suppose that $\phi(a)$ is neither an atom nor a dual atom of $L_2$. Let $\phi(p)$ be an atom of $L_2$ such that $\phi(p) < \phi(a)$, for some $p \in L_1$. Similarly,  $\phi(q)$ be a dual atom of $L_2$ such that $\phi(a) < \phi(q)$, for some $q \in L_1$. Since $L_2$ is an atomistic lattice and $\phi(p) < \phi(a)$, there exists an atom $\phi(p_1)$, for some $p_1 \in L_1$, different from $\phi(p)$ such that $\phi(p_1) < \phi(a)$. Similarly, as $L_2$ is a dual atomistic lattice, there exists a dual atom $\phi(q_1)$, for some $q_1 \in L_1$, different from $\phi(q)$ such that $\phi(a) < \phi(q_1)$.
		
		\par  From the above discussion, we have $a$ is adjacent to $p, p_1, q$ and $q_1$  in $Com(L_1)$. Since $a$ is an atom in $L_1$ so that the elements $p, p_1, q$ and $q_1$ are all greater than $a$. Also, as $\phi(p)$ and  $\phi(p_1)$ both are different atoms of $L_2$, they are not adjacent in $Com(L_2)$. This gives $p$ and $p_1$ are not adjacent in $Com(L_1)$, i.e., they are not comparable in $L_1$. Similarly, $\phi(q)$ and  $\phi(q_1)$ both are different dual atoms of $L_2$, they are not adjacent in $Com(L_2)$. This gives $q$ and $q_1$ are not adjacent in $Com(L_1)$, i.e., they are not comparable in $L_1$. Also, we have $\phi(p) < \phi(a) < \phi(q), \phi(q_1)$ and $\phi(p_1) < \phi(a) < \phi(q), \phi(q_1)$. So that $p \sim q, q_1$ and $p_1 \sim q, q_1$ in $Com(L_1)$. 
		
		Thus, we have either $q, q_1 < p$ or $q, q_1 > p$ and, either $q, q_1 < p_1$ or $q, q_1 > p_1$. Now, if either ($q, q_1 < p$ and  $q, q_1 > p_1$) or ($q, q_1 > p$ and $q, q_1 < p_1$), then $p$ and $p_1$ will be comparable, a contradiction. So that we have either $q, q_1 < p, p_1$ or $q, q_1 > p, p_1$. We consider the following cases.
		
		\textbf{Case(1)}: Suppose that $q, q_1 < p, p_1$. We have $\phi(p), \phi(p_1) < \phi(a) < \phi(q)$ in $L_2$. Since $L_2$ is atomistic, there exists another atom $\phi(p_2)$, for some $p_2 \in L_1$, such that $\phi(p_2) < \phi(q)$ and $\phi(p_2) \nleq \phi(a)$. Thus, $\phi(p_2) \sim \phi(q), \phi(p_2) \nsim \phi(a), \phi(p_2) \nsim \phi(p)$ and $\phi(p_2) \nsim \phi(p_1)$ in $Com(L_2)$, as $\phi(p_1)$ and $\phi(p_2)$ are atoms in $L_2$ . This gives $p_2 \sim q, p_2 \nsim a, p_2 \nsim p$ and $p_2 \nsim p_1$ in $Com(L_1)$. Since $p_2 \sim q$, we have either $p_2 < q$ or $p_2 > q$. If $p_2 < q$, then $p_2 < p, p_1$, as $q < p, p_1$, a contradiction to  $p_2 \nsim p$ and $p_2 \nsim p_1$. If $p_2 > q$, then $p_2 > a$, as $q > a$, a contradiction to  $p_2 \nsim a$.
		
		\textbf{Case(2)}: Suppose that $q, q_1 > p, p_1$. We have $\phi(q), \phi(q_1) > \phi(a) > \phi(p)$ in $L_2$. Since $L_2$ is dual atomistic, there exists another dual atom $\phi(q_2)$, for some $q_2 \in L_1$, such that $\phi(q_2) > \phi(p)$ and $\phi(q_2) \ngeq \phi(a)$. Hence,  $\phi(q_2) \sim \phi(p), \phi(q_2) \nsim \phi(a), \phi(q_2) \nsim \phi(q)$ and $\phi(q_2) \nsim \phi(q_1)$ in $Com(L_2)$. This gives $q_2 \sim p, q_2 \nsim a, q_2 \nsim q$ and $q_2 \nsim q_1$ in $Com(L_1)$. Since $q_2 \sim p$, we have either $q_2 < p$ or $q_2 > p$. If $q_2 < p$, then   $q_2 < q, q_1$, as $p < q, q_1$, a contradiction to  $q_2 \nsim q$ and $q_2 \nsim q_1$. If $q_2 > p$, then  $q_2 > a$, as $p > a$, a contradiction to  $q_2 \nsim a$.
		
		\par Thus, in both cases, we get contradictions. It means that our assumption, $\phi(a)$ is neither an atom nor a dual atom of $L_2$, was wrong. Hence  $\phi(a)$ must be either an atom or a dual atom of $L_2$.  
	\end{proof}	
	

	\begin{remark}
		If lattice $L$ is atomistic and dual atomistic, then $L^d$ is also atomistic and dual atomistic. Also, dual atoms of $L$ become atoms of $L^d$ and vice versa.
	\end{remark}
	
	From the duality principle, Lemma \ref{dualatom} follows immediately.
	
	\begin{lemma}\label{dualatom}
		Let $L_1 $ and  $L_2$ be two atomistic and dual atomistic lattices such that $Com(L_1) \cong Com(L_2)$. If $a \in L_1$ is a dual atom, then $\phi(a)$ is either an atom of $L_2$ or a dual atom of $L_2$.
	\end{lemma}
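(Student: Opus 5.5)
The plan is to derive Lemma \ref{dualatom} from Lemma \ref{atom} by passing to the dual lattice $L_1^d$, as the sentence preceding the statement suggests.

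\textbf{Step 1: set up the dual.} Let $\phi$ be the given graph isomorphism from $Com(L_1)$ to $Com(L_2)$, and let $a$ be a dual atom of $L_1$. By the remark preceding the statement, $L_1^d$ is again atomistic and dual atomistic, and $a$ is an atom of $L_1^d$.

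\textbf{Step 2: transfer $\phi$.} Since $L_1$ and $L_1^d$ have the same underlying set and the same comparability relation, $Com(L_1^d)$ and $Com(L_1)$ are literally the same graph on the vertex set $L_1\setminus\{0_{L_1},1_{L_1}\}$. This is the content of the remark that a lattice and its dual have isomorphic comparability graphs. Hence the same map $\phi$ is a graph isomorphism from $Com(L_1^d)$ to $Com(L_2)$. This is the one point needing care: Lemma \ref{atom} is stated for an arbitrary graph isomorphism $\phi$, so we must apply it to this same $\phi$, not merely to some isomorphism.

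\textbf{Step 3: apply Lemma \ref{atom}.} Apply Lemma \ref{atom} to the pair $L_1^d$, $L_2$ (both atomistic and dual atomistic) with the isomorphism $\phi$ and the atom $a$ of $L_1^d$. It gives that $\phi(a)$ is either an atom or a dual atom of $L_2$, which is exactly the claim.

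There is no real obstacle. The only subtle point is Step 2: the identity map on vertices identifies $Com(L_1)$ with $Com(L_1^d)$, so $\phi$ itself, and not just some isomorphism, transfers.

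An alternative is to repeat the proof of Lemma \ref{atom} with all inequalities reversed and the roles of atoms and dual atoms exchanged. In that version, Case (1) and Case (2) swap, using dual atomisticity of $L_2$ where atomisticity was used and vice versa. The duality argument above is shorter.
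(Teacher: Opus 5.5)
Your proposal is correct and matches the paper, which derives Lemma \ref{dualatom} in one line from Lemma \ref{atom} ``from the duality principle'', using the remark that $L_1^d$ is again atomistic and dual atomistic and that the dual atoms of $L_1$ are the atoms of $L_1^d$. Your observation that $Com(L_1^d)$ and $Com(L_1)$ are the same graph, so the given $\phi$ itself transfers, is exactly the detail the paper leaves implicit.
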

	
	\begin{lemma}\label{atom dual atom}
		
		Let $L_1 $ and  $L_2$ be two 0-modular, atomistic and dual atomistic lattices such that $Com(L_1) \cong Com(L_2)$. Then atoms of $L_1$, $At(L_1)$, maps to either atoms of $L_2$, $At(L_2)$, or dual atoms of $L_2$, $DualAt(L_2)$.
		
	\end{lemma}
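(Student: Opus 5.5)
The plan is to combine Lemma \ref{atom} with a pentagon argument modelled on the proof of Lemma \ref{length 3}. By Lemma \ref{atom}, $\phi$ sends every atom of $L_1$ to an atom or to a dual atom of $L_2$. So it suffices to rule out a \emph{mixed} situation: atoms $a\neq b$ of $L_1$ with $\phi(a)$ an atom and $\phi(b)$ a dual atom of $L_2$. Throughout, I read $0$-modularity as the absence of a pentagon $N_5$ whose least element is $0_{L_2}$. The aim is to produce such a pentagon in $L_2$ from the mixed situation.

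\textbf{Step 1 (the configuration).} Since $a\| b$ in $L_1$, we have $\phi(a)\|\phi(b)$ in $L_2$. Because $\phi(a)$ is an atom, $\phi(a)\wedge\phi(b)=0_{L_2}$. Because $\phi(b)$ is a dual atom and $\phi(a)\nleq\phi(b)$, $\phi(a)\vee\phi(b)=1_{L_2}$. Since $L_2$ is atomistic, choose an atom $\phi(c)<\phi(b)$; it is distinct from $\phi(a)$, so $\phi(a)\wedge\phi(c)=0_{L_2}$.

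\textbf{Step 2 (the pentagon).} Put $\phi(d)=\phi(a)\vee\phi(c)$ and $e=\phi(b)\wedge\phi(d)$, so $\phi(c)\le e<\phi(d)$. Then $\phi(a)\wedge e\le\phi(a)\wedge\phi(b)=0_{L_2}$ and $\phi(a)\vee e=\phi(d)$. Hence, if $\phi(c)<e$, the set $\{0_{L_2},\phi(c),e,\phi(a),\phi(d)\}$ is a pentagon with bottom $0_{L_2}$, contradicting $0$-modularity. In the special case $\phi(d)=1_{L_2}$ we get $e=\phi(b)$, and this is exactly the pentagon from Lemma \ref{length 3}.

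\textbf{Step 3 (the remaining case, the main obstacle).} It remains to exclude $e=\phi(c)$, i.e.\ $\phi(b)\wedge(\phi(a)\vee\phi(c))=\phi(c)$ with $\phi(d)\neq 1_{L_2}$. For this I pull the relations back to $L_1$ through the graph, as in the proof of Lemma \ref{length 3}:
\begin{enumerate}
\item $\phi(c)<\phi(b)$ makes $c$ comparable with the atom $b$, so $c>b$;
\item $\phi(a)<\phi(d)$ gives $d>a$;
\item $c\sim d$ and $c\nsim a$.
\end{enumerate}
I would then exploit the freedom in choosing $\phi(c)$, using that $L_2$ is atomistic and dual atomistic. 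First, pick the atom $\phi(c)$ below $\phi(b)$ so that $\phi(a)\vee\phi(c)$ is as large as possible. Second, use a dual atom $\phi(q)\geq\phi(d)$ together with another dual atom not above $\phi(a)$, in the style of the case analysis in Lemma \ref{atom}. The goal is to force two elements of $L_1$ that lie above $a$ (or above $b$) to be both comparable and incomparable. If this closes the gap, the mixed case is impossible.

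\textbf{Step 4 (conclusion).} Once the mixed case is ruled out, $\phi(At(L_1))$ lies entirely in $At(L_2)$ or entirely in $DualAt(L_2)$. Applying the same argument to $\phi^{-1}$ and to $L_2^d$, which is again atomistic and dual atomistic, gives the corresponding statement for dual atoms via Lemma \ref{dualatom}.
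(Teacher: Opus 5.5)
Your Steps 1 and 2 are correct, but Step 3 leaves a genuine gap. You call the case $\phi(b)\wedge(\phi(a)\vee\phi(c))=\phi(c)$ with $\phi(a)\vee\phi(c)\neq 1_{L_2}$ the main obstacle. You then offer only a tentative plan (choose $\phi(c)$ to maximize the join, bring in further dual atoms as in Lemma \ref{atom}) and end with ``if this closes the gap''. So the mixed case is not actually excluded, and excluding it is the whole content of the lemma.

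The missing observation is that the relations you already pulled back close the case at once:
\begin{enumerate}
\item From $b<c$, $a<d$, $c\sim d$ and $a\nsim c$: if $d<c$ then $a<c$, which is impossible. Hence $c<d$, and so $b<c<d$.
\item Therefore $d$ is a vertex of $Com(L_1)$ lying above both atoms $a$ and $b$, and $\phi(d)$ is adjacent to $\phi(a)$ and to $\phi(b)$.
\item Since $\phi(a)$ is an atom and $\phi(b)$ is a dual atom, this forces $\phi(a)<\phi(d)<\phi(b)$, contradicting $\phi(a)\|\phi(b)$.
\end{enumerate}
In your remaining case you can even see it directly. There $\phi(d)\not\le\phi(b)$ and $\phi(d)\neq 1_{L_2}$, so $\phi(d)\|\phi(b)$, hence $d\| b$, which contradicts $b<d$. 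This argument works whenever $\phi(a)\vee\phi(c)\neq 1_{L_2}$, so the element $e$ and the pentagon of Step 2 are unnecessary. $0$-modularity of $L_2$ is needed only in the case $\phi(a)\vee\phi(c)=1_{L_2}$, where your pentagon $\{0_{L_2},\phi(c),\phi(b),\phi(a),1_{L_2}\}$ applies.

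The paper uses the same ``sandwich'' contradiction but produces the common upper bound inside $L_1$ directly. It shows $p_1\vee p_2\neq 1_{L_1}$ for distinct atoms using $0$-modularity of $L_1$. If $p_1\vee p_2=1_{L_1}$, take a neighbour $a_1>p_1$; then $a_1\| p_2$, $a_1\wedge p_2=0_{L_1}$ and $a_1\vee p_2=1_{L_1}$, so $\{0_{L_1},p_1,a_1,p_2,1_{L_1}\}$ is an $N_5$ containing $0_{L_1}$. Then $\phi(p_1\vee p_2)$ is squeezed between the atom $\phi(p_1)$ and the dual atom $\phi(p_2)$. Once repaired as above, your route is a valid variant that spends $0$-modularity and atomisticity on $L_2$ rather than $L_1$.
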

	
	\begin{proof}		
		Let $\phi$ be a graph isomorphism from $Com(L_1)$ to $Com(L_2)$. Let $p_1$ and $p_2$ be two distinct atoms of $L_1$.  Suppose that $p_1 \vee p_2 = 1_{L_1}$. Since we are considering only connected graphs, there exist a path, $p_1-a_1-\dots-{p_2}$. As $p_1$ is an atom, $p_1 < a_1$. If $p_2 \sim a_1$, then we must have $p_2 < a_1$. This will give us $p_1 \vee p_2 \leq a_1$, which is not possible  as   $p_1 \vee p_2 = 1_{L_1}$.Thus, we must have $a_1 \| p_2$. Clearly, $a_1 \wedge p_2 = 0_{L_1}$. Now if $a_1 \vee p_2 \neq 1_{L_1}$, then again as $p_1 < a_1 < a_1 \vee p_2$ and $p_2 < a_1 \vee p_2$, we get $p_1 \vee p_2 \leq a_1 \vee p_2$, which is not possible as $p_1 \vee p_2 = 1_{L_1}$. Thus, $a_1 \vee p_2 = 1_{L_1}$. Clearly, the set $ \{p_1, a_1, p_2, 0_{L_1}, 1_{L_1}\} $ forms a sublattice containing $0_{L_1}$ and  isomorphic to  $N_5$, which is contradiction to the fact that $L_1$ is 0-modular.	Therefore we must have  $p_1 \vee p_2 \neq 1_{L_1}$. 
		
		In view of Lemma \ref{atom}, suppose $\phi(p_1)$ is an atom of $L_2$ and on the contrary, suppose that $\phi(p_2)$ is a dual atom of $L_2$. Clearly, $p_1, p_2 < p_1 \vee p_2$, i.e., $p_1 \sim p_1 \vee p_2$ and $p_2 \sim p_1 \vee p_2$ in $Com(L_1)$. So that we have $\phi(p_1) \sim \phi(p_1 \vee p_2)$ and $\phi(p_2) \sim \phi(p_1 \vee p_2)$ in $Com(L_2)$.
		
		\par Now, we have $\phi(p_1) \sim \phi(p_1 \vee p_2)$ and $\phi(p_2) \sim \phi(p_1 \vee p_2)$ in $Com(L_2)$. Since $\phi(p_1)$ is an atom of $L_2$, $\phi(p_1) < \phi(p_1 \vee p_2)$ and $\phi(p_2)$ is a dual atom of $L_2$ gives $\phi(p_2) > \phi(p_1 \vee p_2)$. So that we have $\phi(p_1) < \phi(p_1 \vee p_2) < \phi(p_2)$. This implies that $\phi(p_1) \sim \phi(p_2)$. However, this is not possible as $p_1$ and $p_2$  are distinct atoms, i.e., $p_1 \nsim p_2$. Thus, $\phi(p_2)$ must be an atom of $L_2$.
		
		On similar lines, we can prove that if $p_1$ maps to a dual atom of $L_2$, then $p_2$ also maps to a dual atom of $L_2$.
	\end{proof}	
	
	\begin{remark}
		In Lemma \ref{atom dual atom}, the 0-modularity is used to make the join of any two atoms nonunit and meet of any two dual atoms nonzero. Instead of 0-modularity, we can assume 1-modularity or join of any two atoms is nonunit and the meet of any two dual atoms is nonzero in the hypothesis.
	\end{remark}
	
	Now, we are ready to prove our main result.
	
	\begin{theorem}\label{iso}
		Let $L_1$ and $L_2$ be two 0-modular, atomistic and dual atomistic lattices. Then $Com(L_1) \cong Com(L_2)$ if and only if either $L_1 \cong L_2 $ or $L_1 \cong L_2^{d} $.
	\end{theorem}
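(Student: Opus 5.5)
The reverse implication is immediate: a lattice isomorphism restricts to a graph isomorphism of comparability graphs, and by the earlier Remark $Com(L_2)\cong Com(L_2^d)$. For the forward implication, fix a graph isomorphism $\phi:Com(L_1)\to Com(L_2)$. By Lemma \ref{atom dual atom}, either $\phi(At(L_1))\subseteq At(L_2)$ or $\phi(At(L_1))\subseteq DualAt(L_2)$. In the second case we replace $L_2$ by $L_2^d$. This is legitimate because $L_2^d$ is again atomistic and dual atomistic (by the Remark). It is also $0$-modular, since the remark after Lemma \ref{atom dual atom} shows that what is really used is that joins of two atoms are nonunit and meets of two dual atoms are nonzero, which is self-dual. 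Finally, $Com(L_2^d)=Com(L_2)$. So we may assume $\phi(At(L_1))\subseteq At(L_2)$.

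Applying the same reasoning to $\phi^{-1}$ (Lemma \ref{atom dual atom} for $L_2,L_1$) shows $\phi(At(L_1))=At(L_2)$. Next I would show $\phi(DualAt(L_1))=DualAt(L_2)$. By Lemma \ref{dualatom}, each dual atom $q$ maps to an atom or a dual atom. Since atoms are already exhausted by images of atoms and $\phi$ is injective, $\phi(q)$ cannot be an atom, so it is a dual atom. The symmetric argument gives equality.

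Extend $\phi$ to $\phi_e:L_1\to L_2$ by sending $0_{L_1}\mapsto 0_{L_2}$ and $1_{L_1}\mapsto 1_{L_2}$. This is a bijection, and it suffices to show it is bi-order preserving. The central claim is that for an atom $p$ and any $x\in L_1\setminus\{0,1\}$, we have $p\le x$ iff $\phi(p)\le\phi(x)$.

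For the forward direction, suppose $p<x$. Then $\phi(p)\sim\phi(x)$. Since $\phi(p)$ is an atom, $\phi(x)<\phi(p)$ is impossible unless $\phi(x)=0$, which is excluded. Hence $\phi(p)<\phi(x)$. If $p=x$ there is nothing to show. The converse follows by the same argument applied to $\phi^{-1}$.

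Now, for nonzero, nonunit $x,y\in L_1$, atomisticity gives $x\le y$ iff every atom below $x$ is below $y$. Translating through the claim, this holds iff every atom below $\phi(x)$ is below $\phi(y)$, which by atomisticity of $L_2$ holds iff $\phi(x)\le\phi(y)$. Here we use that $\phi$ is a bijection $At(L_1)\to At(L_2)$, so the sets of atoms below correspond exactly. The cases involving $0$ or $1$ are trivial, so $\phi_e$ is a lattice isomorphism, giving $L_1\cong L_2$ or $L_1\cong L_2^d$.

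The main obstacle I expect is the reduction step: checking that passing to $L_2^d$ preserves $0$-modularity, or at least the property actually used in Lemma \ref{atom dual atom}. If $0$-modularity is not literally self-dual, I would invoke the remark after that lemma, noting that its argument applies verbatim to $L_2^d$. Alternatively, I would avoid dualizing altogether: in the case $\phi(At(L_1))=DualAt(L_2)$, run the same argument with dual atoms in place of atoms and dual atomisticity of $L_2$ in place of atomisticity, concluding directly that $\phi_e$ is order-reversing.
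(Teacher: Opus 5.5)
Your proof is correct and follows the paper's route. Lemma \ref{atom dual atom} splits into two cases, the second is reduced to the first by passing to $L_2^d$, and $\phi$ extended by $0\mapsto 0$, $1\mapsto 1$ is shown bi-order preserving via atomisticity; your ``atoms below'' characterization is a direct form of the paper's contradiction argument, and you usefully make explicit that $\phi$ maps $At(L_1)$ \emph{onto} $At(L_2)$, which the paper uses tacitly.

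One small correction: $0$-modularity is not self-dual in general, so do not claim that $L_2^d$ is $0$-modular. The valid justification for the reduction is your fallback: the property Lemma \ref{atom dual atom} actually uses (joins of two atoms are nonunit and meets of two dual atoms are nonzero, both forced by $0$-modularity as the Remark after that lemma notes) is self-dual.
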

	
	\begin{proof}		
		Let $ \phi $ be a graph isomorphism from $Com(L_1)$ to $Com(L_2)$. Clearly, $\phi $ is a bijective map from $L_1 \setminus \{0_{L_1 }, 1_{L_1 }\}$ to $L_2 \setminus \{0_{L_2}, 1_{L_2}\}$. Lemma \ref{atom dual atom} says, atoms of $L_1$ maps to either atoms of $L_2$ or dual atoms of $L_2$. We consider the following cases.
		
		\textbf{Case (1): Suppose atoms of $L_1$ maps to atoms of $L_2$.} 
		
		We define $ \phi_e $ from $L_1$ to $L_2$ such that  $\phi_e(0_{L_1 }) = 0_{L_2 } $, $\phi_e(1_{L_1 }) = 1_{L_2 } $ and $\phi_e(a) = \phi(a)$, for $ a \in L_1 \setminus \{0_{L_1}, 1_{L_1}\}$. 
		
		We claim that $L_1 \cong L_2$. For that, we will show that $\phi_e$ is a bijective and bi-order preserving map between $L_1$ and $L_2$.
		
		It is clear that $\phi_e$ is a bijective map. To show that $\phi_e$ is a bi-order preserving map, let $a, b \in L_1 $ such that $a < b$. If $a = 0_{L_1}$, then $ \phi_e(a) = \phi_e(0_{L_1}) = 0_{L_2}$ and $ 0_{L_2} < \phi(b)$. If $b = 1_{L_1} $, then $ \phi_e(b) = \phi_e(1_{L_1}) = 1_{L_2}$ and $ \phi(a) < 1_{L_2} $. Now, suppose $a, b \nin \{0_{L_1}, 1_{L_1}\} $. Since $a < b $, they are adjacent in $Com(L_1)$. As $Com(L_1) \cong Com(L_2)$, $\phi(a) \sim \phi(b)$ in $Com(L_2)$. Suppose that $ \phi(b) < \phi(a) $. As $ L_2 $ is an atomistic lattice and by the assumption, there exists an atom $\phi(p)$, for some atom $ p \in L_1 $, such that $ \phi(p) \leq \phi(a)$ and $ \phi(p) \nleq \phi(b)$, i.e., $ \phi(p) \sim \phi(a)$ and $ \phi(p) \nsim \phi(b)$ in $Com(L_2)$. Clearly if $  \phi(b) < \phi(p) $, then it will contradicts to the fact that $ \phi(p) $ is an atom  in $L_2$. This gives $ p \sim a $ and $ p \nsim b $ in $Com(L_1)$. As $p$ is an atom in $L_1$, we must have $ p \leq a $. However, $ a < b $ gives $ p < b $, i.e, $ p \sim b $, a contradiction to $ \phi(p) \nsim \phi(b) $. Thus, we must have $\phi(a) < \phi(b)$, i.e., $\phi_e (a) < \phi_e (b)$ and we are done.
		
		Let $ \phi_e (c) < \phi_e (d) $, for some $c, d \in L_1 $. If $ \phi_e (c) = 0_{L_2} $, then $ 0_{L_2} = \phi_e (0_{L_1})$ implies that $ c = 0_{L_1}$ and $ 0_{L_1} < d $. If $ \phi_e (d) = 1_{L_2} $, then $ 1_{L_2} = \phi_e (1_{L_1})$ implies that $ d = 1_{L_1}$ and $ c < 1_{L_1} $. Now, suppose $ \phi_e (c), \phi_e (d) \nin \{0_{L_2}, 1_{L_2}\}$. Since $\phi (c) < \phi (d)$, i.e., $\phi (c) \sim  \phi (d) $ in $ Com(L_2) $, $ c \sim d $ in $ Com(L_1) $. Suppose that $ d < c $. As $ L_1 $ is an atomistic lattice, there exists an atom $ p_1 \in L_1 $ such that  $ p_1 \leq c $ and $ p_1 \nleq d $, i.e., $ p_1 \sim c $ and $p_1 \nsim d $ in $ Com(L_1) $. If $  d < p_1 $, then it will contradicts to the fact that $ p_1 $ is an atom  in $ L_1 $ . This gives that $ \phi(p_1) \sim \phi(c) $ and $ \phi(p_1) \nsim \phi(d) $ in $ Com(L_2) $. As $p_1$ is an atom, by the assumption in this case, $ \phi(p_1) $ is an atom and hence $ \phi(p_1) \leq \phi(c) $. However, $ \phi(c) < \phi(d) $ which gives $ \phi(p_1) < \phi(d) $, i.e., $ \phi(p_1) \sim \phi(d) $, a contradiction. Hence, we must have $ c < d $. 
		
		Thus, $ \phi_e $ is a bijective and bi-order preserving map from $ L_1 $ to $ L_2 $ and hence $ L_1 \cong L_2 $. 
		
		\par \textbf{ Case (2): Suppose atoms of $ L_1 $ maps to dual atoms of $ L_2 $.} 
		
		Clearly, $Com(L_2) \cong Com(L_2^d)$. By the assumption, we have  $Com(L_1) \cong Com(L_2)$ so that $Com(L_1) \cong Com(L_2^d)$. Since $L_2$ is atomistic and dual atomistic, $L_2^d$ is also atomistic and dual atomistic. Moreover, atoms of $L_2$ become dual atoms $L_2^d$ and vice versa. Thus, by the assumption, in this case, atoms of $L_1$ map to atoms of $L_2^d$. Therefore, by using case(1), $L_1 \cong L_2^d$.   	
	\end{proof}	
	
	The proof of Theorem \ref{iso} uses both atomisticity and dual atomisticity of lattices. The following example shows that only atomisticity or dual atomisticity is insufficient.
	
	\begin{example}\label{Ex.atomistic}
		The lattice $L_1$, depicted in the following figure, is atomistic but not dual atomistic and lattice $L_2$ is neither atomistic nor dual atomistic still $Com(L_1) \cong Com(L_2)$.  
	\end{example}
	
	\begin{center}
		
		\begin{tikzpicture}[scale =.5]
			
			\draw [fill=black] (0,0) circle (.1);
			\draw [fill=black] (1,1) circle (.1); 
			\draw [fill=black] (-1,1) circle (.1);  
			\draw [fill=black] (-3,1) circle (.1);  
			\draw [fill=black] (3,1) circle (.1); 
			\draw [fill=black] (-2,2) circle (.1); 
			\draw [fill=black] (0,2) circle (.1);
			
			\draw [fill=black] (-1,3) circle (.1);
			\draw [fill=black] (0,4) circle (.1);
			
			\draw (0,0)--(-3,1)--(-2,2)--(-1,3)--(0,4);
			
			\draw (0,0)--(-1,1)--(-2,2);
			
			\draw
			(0,0)--(1,1)--(0,2)--(-1,3);
			
			\draw
			(0,2)--(-1,1);

			\draw
			(0,0)--(3,1)--(0,4);
			
			\draw node [below] at (0,-0.1) { $0$};

			\draw node [below] at (0,-0.8) { $L_1$};
			
			\draw node [below] at (-3,0.9) { $p_1$};
			
			\draw node [below] at (-1,0.9) { $p_2$};
			
			\draw node [below] at (1,0.9) { $p_3$};
			
			\draw node [below] at (3,0.9) { $p_4$};
			
			\draw node [below] at (-2,1.9) { $x$};
			
			\draw node [below] at (0,1.9) { $a$};

			\draw node [below] at (-1,2.9) {$a_1$};
			
			\draw node [above] at (0,4.1) { $1$};
			
			\begin{scope}[shift={(8,0)}]
				
				\draw [fill=black] (0,0) circle (.1); 
				\draw [fill=black] (-1,1) circle (.1); \draw [fill=black] (-3,2) circle (.1);  \draw [fill=black] (-1,2) circle (.1);  \draw [fill=black] (1,2) circle (.1); 
				\draw [fill=black] (-2,3) circle (.1); \draw [fill=black] (-1,1) circle (.1);
				\draw [fill=black] (0,3) circle (.1);
				\draw [fill=black] (0,4) circle (.1);
				\draw [fill=black] (3,2) circle (.1);

				\draw (0,0)--(-1,1)--(-3,2)--(-2,3)--(0,4);
				\draw (-1,1)--(1,2)--(0,3)--(0,4);
				\draw
				(-1,1)--(-1,2)--(-2,3);
				\draw
				(-1,1)--(-1,2)--(0,3);
				\draw (0,0)--(3,2)--(0,4);

				\draw node [below] at (0,-0.1) { $0$};
				
				\draw node [below] at (-1,0.9) { $a_1'$};
				
				\draw node [below] at (-3,1.9) { $p_1'$};
				
				\draw node [left] at (-1.1,2) { $p_2'$};
				
				\draw node [below] at (1,1.9) { $p_3'$};
				
				\draw node [below] at (3,1.9) { $p_4'$};
				
				\draw node [above] at (-2,2.9) { $x'$};
				
				\draw node [below] at (0,2.9) { $a'$};
				
				\draw node [above] at (0,4.1) {$1$};

				\draw node [below] at (0,-0.8) { $L_2$};
				
				
			\end{scope}
			
			\begin{scope}[shift={(16,0)}]
				
				\draw [fill=black] (1,1) circle (.1); 
				\draw [fill=black] (-1,1) circle (.1);  
				\draw [fill=black] (-3,1) circle (.1);  
				\draw [fill=black] (3,1) circle (.1); 
				\draw [fill=black] (-2,2) circle (.1); 
				\draw [fill=black] (0,2) circle (.1);
				
				\draw [fill=black] (-1,3) circle (.1);

				\draw (-3,1)--(-2,2)--(-1,3);
				
				\draw (-1,1)--(-2,2);
				
				\draw
				(1,1)--(0,2)--(-1,3);
				
				\draw
				(0,2)--(-1,1);
				
				\draw
				(-1,1)--(-1,3);
				
				\path[thin,black,draw](-3,1)..controls(-2.5,3)..(-1,3);
				
				\path[thin,black,draw](1,1)..controls(1,3)..(-1,3);		
				
				\draw node [below] at (0,-0.8) { $Com(L_1) \cong Com(L_2)$};
				
				\draw node [below] at (-3,0.9) { $p_1$};
				
				\draw node [below] at (-1,0.9) { $p_2$};
				
				\draw node [below] at (1,0.9) { $p_3$};
				
				\draw node [below] at (3,0.9) { $p_4$};
				
				\draw node [below] at (-2,1.9) { $x$};
				
				\draw node [below] at (0,1.9) { $a$};

				\draw node [above] at (-1,2.9) {$a_1$};
				
			\end{scope}
			
		\end{tikzpicture}
		
	\end{center}	
	
	\begin{remark}
		In view of Theorem \ref{duff}, Theorem \ref{iso} is stronger one.  Theorem \ref{duff} says that atomisticity with dual atomisticity is preserved under covering graph isomorphism, and the graph isomorphisms are given by certain direct product decompositions. However, Theorem \ref{iso} states that lattice isomorphisms directly give the graph isomorphisms.
		
		It is a natural question whether the 0-modularity becomes equivalent with atomisticity with dual atomisticity in the case of length 3. The answer is negative. If we take a chain of length 3, it is 0-modular but neither atomistic nor dual atomistic. Also, consider the lattice $L$ depicted in the following figure. Clearly, it is an atomistic and dual atomistic lattice but not 0-modular.
	\end{remark}
	
	\begin{center}
		\begin{tikzpicture}[scale =.5]
			
			\draw [fill=black] (0,0) circle (.1);
			\draw [fill=black] (-1,1) circle (.1); 
			\draw [fill=black] (-2,1) circle (.1);  
			\draw [fill=black] (-3,1) circle (.1);
			\draw [fill=black] (1,1) circle (.1); 
			\draw [fill=black] (2,1) circle (.1);  
			\draw [fill=black] (3,1) circle (.1);
			\draw [fill=black] (0,3) circle (.1);
			\draw [fill=black] (-1,2) circle (.1); 
			\draw [fill=black] (-2,2) circle (.1);  
			\draw [fill=black] (-3,2) circle (.1);
			\draw [fill=black] (1,2) circle (.1); 
			\draw [fill=black] (2,2) circle (.1);  
			\draw [fill=black] (3,2) circle (.1);

			\draw (0,0)--(-1,1)--(-1,2)--(-2,1)--(-3,2)--(-3,1)--(-2,2)--(-1,1);
			
			\draw (0,0)--(1,1)--(1,2)--(2,1)--(3,2)--(3,1)--(2,2)--(1,1);
			
			\draw (0,0)--(2,1);
			\draw (0,0)--(3,1);
			\draw (0,0)--(-2,1);
			\draw (0,0)--(-3,1);
			
			\draw (0,3)--(1,2);
			\draw (0,3)--(2,2);
			\draw (0,3)--(3,2);
			\draw (0,3)--(-1,2);
			\draw (0,3)--(-2,2);
			\draw (0,3)--(-3,2);

			\draw node [below] at (0,-0.3) { $L$};
			
		\end{tikzpicture}
		
	\end{center}

	We close the paper by applying Theorem \ref{iso} to the subspace inclusion graph of a finite dimensional vector space. 
	
	\begin{lemma}\label{vector}
		Let $V_1$ and $V_2$ be two finite dimensional vector spaces over the same field $F$. Then $ V_1 \cong V_2 $ if and only if $ L(V_1) \cong L(V_2) $.
	\end{lemma}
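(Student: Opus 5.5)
The plan is to prove the two implications separately, using the fact that over a fixed field $F$ a finite dimensional vector space is determined up to isomorphism by its dimension, together with the fact that dimension can be read off from the subspace lattice $L(V)$ as its length $\ell(L(V))$.

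For the forward direction, suppose $T\colon V_1\to V_2$ is a linear isomorphism. I will define $\Phi\colon L(V_1)\to L(V_2)$ by $\Phi(W)=T(W)$. Since $T$ is a bijective linear map, $T(W)$ is a subspace of $V_2$, and $\Phi$ is a bijection whose inverse is $W'\mapsto T^{-1}(W')$. Moreover, $W\subseteq W'$ holds if and only if $T(W)\subseteq T(W')$. So $\Phi$ is a bi-order preserving bijection, hence a lattice isomorphism by the equivalent definition given in the preliminaries.

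For the converse, suppose $L(V_1)\cong L(V_2)$ via a lattice isomorphism $\Psi$. A lattice isomorphism carries maximal chains to maximal chains and preserves their lengths, so $\ell(L(V_1))=\ell(L(V_2))$. It remains to show $\ell(L(V))=\dim V$ for any finite dimensional $V$.
\begin{enumerate}
\item If $W_0\subsetneq W_1\subsetneq\cdots\subsetneq W_k$ is a chain of subspaces, then the dimensions strictly increase. Hence every chain in $L(V)$ has length at most $\dim V$.
\item Conversely, a basis $e_1,\dots,e_n$ gives the chain $0\subsetneq\langle e_1\rangle\subsetneq\cdots\subsetneq\langle e_1,\dots,e_n\rangle=V$, which has length $n=\dim V$.
\end{enumerate}
Thus $\dim V_1=\dim V_2=n$, and both $V_1$ and $V_2$ are isomorphic to $F^n$, so $V_1\cong V_2$.

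The only step that needs care is checking that the length of $L(V)$ equals $\dim V$, which the chain argument above settles. The hypothesis that both spaces are over the same field $F$ is essential: over different fields the dimension alone does not determine the space. I do not expect any serious obstacle. The lemma is a standard fact that the paper will presumably combine with Theorem~\ref{iso}; for that it helps that $L(V)$ is modular, atomistic and dual atomistic, and that $L(V)\cong L(V)^d$ through annihilators in the dual space.
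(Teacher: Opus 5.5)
Your proposal is correct and takes the same route as the paper: reduce to showing $\dim V_1=\dim V_2$, using the fact that $\ell(L(V))=\dim V$ and that length is invariant under lattice isomorphism. You additionally prove the length-equals-dimension fact and write out the forward direction via $W\mapsto T(W)$; the paper only asserts both.
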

	
	\begin{proof}
		Since $V_1$ and $V_2$ both are finite dimensional vector spaces, it is sufficient show that $dim(V_1) = dim(V_2)$. Since $ L(V_1) \cong L(V_2) $, we have  $l(L(V_1))$ = $l(L(V_2)) $. However, the length of the subspace lattice of a finite-dimensional vector space is the dimension of that vector space. Thus, $ dim(V_1) = dim(V_2) $ and hence $ V_1 \cong V_2 $.
		
		The converse part is quite clear.
	\end{proof}
	
	\begin{corollary}[{\cite[Corollary 5.2]{ad}}]\label{vspcorrolary}
		Let $V_1$ and $V_2$ be two finite dimensional vector spaces over the same field $F$. Then $V_1$ and $V_2$ are isomorphic as vector spaces if and only if $In(V_1)$ and $In(V_2)$ are isomorphic as graphs. 
	\end{corollary}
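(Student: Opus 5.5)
The plan is to route everything through the subspace lattices and apply Theorem \ref{iso}. By definition, the subspace inclusion graph $In(V)$ has as vertices the nontrivial proper subspaces of $V$, with two subspaces adjacent when one is properly contained in the other. This is exactly $Com(L(V))$, where $L(V)$ is the lattice of subspaces of $V$, with $0_{L(V)}=\{0\}$ and $1_{L(V)}=V$. So the corollary amounts to: $V_1\cong V_2$ if and only if $Com(L(V_1))\cong Com(L(V_2))$.

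For the forward direction, a linear isomorphism $T:V_1\to V_2$ induces the lattice isomorphism $W\mapsto T(W)$ from $L(V_1)$ to $L(V_2)$. Restricting it to the nontrivial proper subspaces preserves strict inclusion in both directions, so it is a graph isomorphism $In(V_1)\cong In(V_2)$.

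For the converse, I check the hypotheses of Theorem \ref{iso} for $L(V)$ with $V$ finite dimensional.
\begin{enumerate}
\item $L(V)$ is modular; this is the standard modular law for subspaces. Hence it is in particular 0-modular.
\item $L(V)$ is atomistic, since every subspace is the join of the one-dimensional subspaces spanned by its basis vectors.
\item $L(V)$ is dual atomistic. A proper subspace $W$ is the intersection of the hyperplanes containing it: extend a basis of $W$ to a basis of $V$ and intersect the kernels of the coordinate functionals of the added basis vectors.
\end{enumerate}
Theorem \ref{iso} then gives $L(V_1)\cong L(V_2)$ or $L(V_1)\cong L(V_2)^d$.

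In the second case I use the annihilator map $W\mapsto W^{\perp}\subseteq V_2^{*}$. It gives $L(V_2)^d\cong L(V_2^{*})$, and $V_2^{*}\cong V_2$ because $V_2$ is finite dimensional, so $L(V_2)^d\cong L(V_2)$. A simpler route also works: $L(V)$ and $L(V)^d$ have the same length, which is all Lemma \ref{vector} needs. So in either case $\ell(L(V_1))=\ell(L(V_2))$, hence $\dim V_1=\dim V_2$, and therefore $V_1\cong V_2$ by Lemma \ref{vector}.

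The main obstacle is the degenerate dimensions, where the standing connectedness assumption behind Lemma \ref{atom dual atom} fails. If $\dim V\le 1$, $In(V)$ is empty. If $\dim V=2$, $In(V)$ is an edgeless graph whose vertices are the lines, one more than the number of elements of $F$. Neither case is connected, so Theorem \ref{iso} does not apply and I treat these cases directly.
\begin{enumerate}
\item An edgeless nonempty graph cannot be isomorphic to $In(V)$ with $\dim V\ge 3$, which has edges.
\item The empty graph arises only when $\dim V\le 1$. Here $\dim 0$ and $\dim 1$ cannot be told apart from the graph, so I will note that the statement implicitly excludes this case or requires positive dimension.
\item Between two edgeless graphs with $\dim V_1=\dim V_2=2$, the conclusion is immediate since $V_1$ and $V_2$ already have the same dimension.
\end{enumerate}
For $\dim\ge 3$, $In(V)$ is connected: any two lines $\ell_1,\ell_2$ give the path $\ell_1\sim \ell_1+\ell_2\sim\ell_2$, and every nontrivial proper subspace is comparable to some line. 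So the main argument above applies there.
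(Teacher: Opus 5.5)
Your proof is correct and follows the same route as the paper: check that $L(V)$ satisfies the hypotheses of Theorem \ref{iso}, dispose of the case $L(V_1)\cong L(V_2)^d$ by self-duality of $L(V_2)$ (or simply by equality of lengths), and conclude with Lemma \ref{vector}. You are more careful than the paper in two places: the paper does not check 0-modularity, and it does not address that for $\dim V\le 2$ the graph $In(V)$ is disconnected, so the connectedness used in Lemma \ref{atom dual atom} fails there; this includes the genuine exception $\dim V_1=0$, $\dim V_2=1$, where both graphs are empty.
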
	
	
	\begin{proof}		
		Suppose $In(V_1)$ is isomorphic to $In(V_2)$, i.e., $Com(L(V_1)) \cong Com(L(V_2))$. Since $ L(V_1) $ and $ L(V_2) $ both are bounded, atomistic and dual atomistic lattices, by Theorem \ref{iso}, we get either $ L(V_1) \cong L(V_2) $ or $ L(V_1) \cong L(V_2)^d$. However, $ L(V_2) $ is self dual, i.e., $ L(V_2) \cong L(V_2)^d $. Thus, by Lemma \ref{vector}, we get $V_1 \cong V_2$.
		
		The converse part is quite clear.
	\end{proof}

	\vskip 10pt 
	
\noindent \textbf{Conclusion:} In this paper, we have discussed about the lattice theoretical properties that are preserved under the graph isomorphism like modularity, distributivity, etc. Along with this we obtained nice relation between chordal graphs, distance-hereditary graphs and dismantlable lattices. Along with that we developed a technique to construct non-isomorphic lattices with isomorphic comparability graphs. In order to find the answer for the Isomorphism Problem, the lattice theoretic properties which are preserved under graph isomorphisms are crucial. The Isomorphism Problem is one of the important problem in the field of graph theory. We find two classes of lattices in which the the graph isomorphism gives the lattice isomorphism. Those classes are modular lattices of length 3 and $0$-modular lattices with atomisticity and dual atomosticity. As the subspace lattice of finite dimensional vector space is $0$-modular, atomistic and dual atomistic, we applied the results to the subspace inclusion graph of a vector space.

\vskip 10pt 
\noindent\textbf{Conflict of interest:} The authors declare that there is no conflict of interest regarding the publishing of this paper.
\vskip 10pt 
\noindent\textbf{Authorship Contributions:} Both the authors contributed equally to the study of comparability graphs of lattices and their applications to the inclusion graphs of vector spaces. Both the authors read and approved the final version of the manuscript.

\vskip5pt 
\noindent\textbf{Data Availability Statement:} Data sharing is not applicable to this article as no datasets were generated or analyzed during the current study.

\end{document}